\newtheorem{theorem}{Theorem}[section]
\newtheorem{lemma}[theorem]{Lemma}
\newtheorem{definition}[theorem]{Definition}
\newtheorem{remark}[theorem]{Remark}
\newtheorem{proposition}[theorem]{Proposition}
\newtheorem{corollary}[theorem]{Corollary}
\newtheorem{example}[theorem]{Example}
\newcommand{\N}{\mathbb{N}}
\newcommand{\R}{\mathbb{R}}
\newcommand{\Sph}{\mathbb{S}} 
\newcommand{\Mm}{\mathcal{M}}
\newcommand{\MH}{\mathcal{M_{\Haus{}}}}
\newcommand{\Jj}{\mathfrak{J}}
\newcommand{\eps}{\varepsilon}
\newcommand{\weakto}{\rightharpoonup}
\newcommand{\Haus}[1]{{\mathcal H}^{#1}} 
\newcommand{\Leb}[1]{{\mathcal L}^{#1}} 
\newcommand{\Per}{P} 
\newcommand{\mean}[1]{\,-\hskip-1.08em\int_{#1}} 
\newcommand{\DM}{\mathcal{DM}}
\newcommand{\redb}{\partial^{*}} 
\newcommand{\ulk}{{u^\lambda_k}}
\newcommand{\ul}{{u^\lambda}}
\newcommand{\mul}{M[u,\lambda]}
\newcommand{\Tr}{\mathrm{Tr}}
\newcommand{\de}{\partial}
\newcommand{\restrict}{\mathbin{\vrule height 1.4ex depth 0pt width
0.13ex\vrule height 0.13ex depth 0pt width 1.3ex}\,}
\newcommand{\B}{\mathcal{B}}
\newcommand{\cF}{{\mathcal F}}
\newcommand{\T}{\mathfrak{T}}
\newcommand{\PB}{\mathcal{PB}}
\renewcommand{\Subset}{\subset\!\subset}
  \let\div\relax
  \DeclareMathOperator{\div}{div}
\newcommand{\res}{\mathop{\hbox{\vrule height 7pt width .5pt depth 0pt
\vrule height .5pt width 6pt depth 0pt}}\nolimits}
\definecolor{grey}{rgb}{.7,.7,.7}
\definecolor{evidGP}{rgb}{0,0,1}
\definecolor{evidG}{rgb}{0,0.5,0}
\title[Measures in the dual of $BV$]{Measures in the dual of $BV$: perimeter bounds and relations with divergence-measure fields}
\author[Giovanni E. Comi]{Giovanni E. Comi}
\address[Giovanni E. Comi]{Dipartimento di Matematica, Università di Bologna, Piazza di Porta San Donato 5, 40126 Bologna (Italy)}
\email{giovannieugenio.comi@unibo.it}
\author[Gian Paolo Leonardi]{Gian Paolo Leonardi}
\address[Gian Paolo Leonardi]{Dipartimento di Matematica, Università di Trento, via Sommarive 14, IT-38123 Povo - Trento (Italy)}
\email{gianpaolo.leonardi@unitn.it}
\thanks{This work has been financially supported by GNAMPA - INdAM. 
Both the authors have been also partially supported by the project PRIN 2017TEXA3H (\textit{Gradient flows, Optimal Transport, and Metric Measure Structures}) and the second-named author by the project PRIN 2022PJ9EFL (\textit{Geometric Measure Theory: Structure of Singular Measures, Regularity Theory and Applications in the Calculus of Variations}). Part of this work was undertaken while the first author visited the University of Trento. He would like to thank this institution for its support and warm hospitality during the visit. The authors are particularly grateful to Lorenzo Brasco and Giorgio Saracco for their support and encouragement during the preparation of the paper. The authors are also grateful to Virginia De Cicco for her comments on a preliminary version of the paper.}
\subjclass[2020]{Primary: 26B30. Secondary: 26B20, 46E27.}
\keywords{Functions of bounded variation, divergence-measure fields, Gauss--Green formulas, dual of $BV$}
\begin{document}

\begin{abstract}
We analyze some properties of the measures in the dual of the space $BV$, by considering (signed) Radon measures satisfying a perimeter bound condition, which means that the absolute value of the measure of a set is controlled by the perimeter of the set itself, and whose total variations also belong to the dual of $BV$. We exploit and refine the results of \cite{Phuc_Torres}, in particular exploring the relation with divergence-measure fields and proving the stability of the perimeter bound from sets to $BV$ functions under a suitable approximation of the given measure. As an important tool, we obtain a refinement of Anzellotti-Giaquinta approximation for $BV$ functions, which is of separate interest in itself and, in the context of Anzellotti's pairing theory for divergence-measure fields, implies a new way of approximating $\lambda$-pairings, as well as new bounds for their total variation. These results are also relevant due to their application in the study of weak solutions to the non-parametric prescribed mean curvature equation with measure data, which is explored in a subsequent work.
\end{abstract}

\maketitle

\tableofcontents

\section{Introduction}

Given the ubiquitous presence of the space of functions of bounded variation $BV$ in the Calculus of Variation, it is natural to try and characterize its dual space. An integral representation of the elements of $BV(\Omega)^*$, for any open set $\Omega \subset \R^n$, was obtained in \cite{FuscoSpector2018}, under the Continuum Hypothesis (we refer to the introduction of \cite{FuscoSpector2018} for a detailed account of the previous research related to $BV^*$). Of particular interest is the subspace of $BV(\Omega)^*$ of those linear continuous functionals $\T_{\mu}$ whose action on $u \in BV(\Omega) \cap L^\infty(\Omega)$ can be represented as
\begin{equation} \label{eq:meas_dual_functional}
\T_{\mu} (u) = \int_\Omega u^* \, d \mu,
\end{equation}
where $u^*$ is the precise representative of $u$ and $\mu$ is a suitable finite Radon measure: these functionals are often called Radon measures in the dual of $BV$ \cite{Meyers_Ziemer, phuc2008characterizations, Phuc_Torres}. Given that $u^*$ is well defined up to a set with zero $(n-1)$-dimensional Hausdorff measure $\Haus{n-1}$, an immediate property of the measures $\mu$ in the dual of $BV$ is that $|\mu|(B) = 0$ for every Borel set with $\Haus{n-1}(B) = 0$.  As explored in \cite{phuc2008characterizations, Phuc_Torres}, under mild assumptions on the regularity of $\Omega$ we have that a finite Radon measure $\mu$ belongs to $BV(\Omega)^*$ in this sense if and only if $\mu = \div F$ for some vector field $F \in L^\infty(\Omega; \R^n)$. This shows a natural connection between the Radon measures in the dual of the space $BV$ and the divergence-measure fields.

Given $p \in [1, +\infty]$, a $p$-summable divergence-measure field on an open set $\Omega$ is a vector field $F \in L^{p}(\Omega; \R^n)$ such that its distributional divergence $\div F$ is a finite Radon measure on $\Omega$. We denote the space of such fields as $\DM^{p}(\Omega)$. Initially introduced by Anzellotti in \cite{Anzellotti_1983}, the divergence-measure fields have been extensively studied, among other motivations, because of their natural applications in relation with generalized Gauss--Green formulas and weak formulations of some families of PDEs (see for instance \cite{ACM,ChenComiTorres,CF1,CDS,crasta2017extension,comi2017locally,crasta2017anzellotti,crasta2019pairings,comi2022representation,crasta2022variational,CTZ,phuc2008characterizations,Phuc_Torres,scheven2016bv,scheven2018dual,Silhavy1,Silhavy2, MR4385590}). Under this respect, it is important to note that, when $p \in [1, + \infty)$, these fields can be naturally paired with scalar functions $u \in W^{1, p'}(\Omega)$, where $p'$ is the conjugate exponent of $p$, since the scalar product $F \cdot \nabla u$ belongs to $L^1(\Omega)$. On the other hand, the case $p = +\infty$ is more complex and interesting to study, since $p' =1$, which includes the case of $u \in BV(\Omega)$.
Following the approach of \cite{crasta2019pairings}, for a given Borel function $\lambda:\Omega\to [0,1]$, we can define the $\lambda$-pairing distribution between a vector field $F\in \DM^{\infty}(\Omega)$ and a scalar function $u\in BV(\Omega)$ as follows:
\[
(F, Du)_{\lambda} := \div(uF) - u^{\lambda}\, \div F\,,
\]
assuming $u^\lambda \in L^1(\Omega; |\div F|)$, where $u^{\lambda}$ is defined $\Haus{n-1}$-almost everywhere as the convex combination of $u^{+}$ and $u^{-}$ (the upper and lower approximate limits of $u$) using the coefficients $\lambda$ and $1-\lambda$, respectively. We point out that this expression is well posed, since sets with zero $\Haus{n-1}$-measure are also $|\div F|$-negligible. As proved in \cite{crasta2019pairings}, the $\lambda$-pairing is a distribution of order zero, i.e., it is a (finite) Radon measure. Moreover, the classical pairing introduced by Anzellotti in \cite{Anzellotti_1983} corresponds to $\lambda \equiv \frac{1}{2}$, and in this case $u^{\lambda}$ coincides $\Haus{n-1}$-almost everywhere with $u^{*}$, the \textit{precise representative} of $u$. For this reason, we set $(F, Du)_* := (F, Du)_{\frac{1}{2}}$. It is also clear that $(F, Du)_* = (F, Du)_\lambda$ for all $u \in BV(\Omega)$ such that $u^* \in L^1(\Omega; |\div F|)$ as long as the Borel function $\lambda: \Omega \to [0, 1]$ satisfies $\lambda(x) = \frac{1}{2}$ for $|\div F|$-a.e. $x \in \Omega$.

Interestingly, every $\lambda$-pairing enjoys the same absolute continuity property with respect to the weak gradient of $u$: 
\begin{equation} \label{eq:pairing_bound_lambda_intro}
|(F, Du)_\lambda| \le \|F\|_{L^\infty(\Omega; \R^n)} |Du| \ \text{ on } \Omega.
\end{equation}
This implies that the mapping
$$BV(\Omega) \cap L^\infty(\Omega) \ni u  \mapsto (F, Du)_\lambda(\Omega)$$ 
defines a functional which is continuous with respect to the $BV$-(semi)norm. As noted in \cite[Remark 4.6]{crasta2019pairings}, the $\lambda$-pairing is linear with respect to $u$ if and only if it coincides with the classical one $(F, Du)_*$; that is, if $\lambda \equiv \frac{1}{2}$ up to $|\div F|$-negligible sets.
Therefore, the mapping
\begin{equation} \label{eq:pairing_functional_intro}
BV(\Omega) \cap L^\infty(\Omega)\ni u \mapsto (F, Du)_*(\Omega)
\end{equation}
is a linear functional, continuous with respect to the $BV$-norm. These observations suggest a relation between the pairings and the functionals in the dual of $BV$.

In addition, in \cite{Phuc_Torres} it was proved that, in the case $\Omega$ is an open bounded set with Lipschitz boundary, a finite Radon measure $\mu$ belongs to $BV(\Omega)^*$ -- that is, the functional $\T_{\mu} : BV(\Omega)\cap L^{\infty}(\Omega) \to \R$ is continuous with respect to the $BV$-norm -- if and only if there exists $C > 0$ such that 
\begin{equation} \label{eq:PB_Intro_PT}
|\mu(U \cap \Omega)| \le C\, \Per(U) \quad \text{ for all open bounded sets } U \subset \R^n \text{ with smooth boundary,}
\end{equation}
where $P(U)$ is the perimeter of the set $U$, which coincides with the surface measure of its boundary, $\Haus{n-1}(\partial U)$, for smooth sets.
Therefore, $\mu \in BV(\Omega)^*$ if and only if the measure $\mu$ enjoys this bound for its absolute value on open smooth sets in terms of their perimeter.

Finally, given $\mu \in BV(\Omega)^*$ there is the non-trivial question of whether $\T_{\mu}(u)$ can be represented as the integral in \eqref{eq:meas_dual_functional} even for $u \in BV(\Omega) \setminus L^\infty(\Omega)$. Indeed, it can happen that $u^* \notin L^1(\Omega; |\mu|)$ (as we show in Remark \ref{rem:counterexample} below), so that the integral representation would not be well-posed.

In this paper, we explore all these known connections recalled so far, in particular integrating and refining some of the results of \cite{Phuc_Torres}.

More precisely, we consider the natural generalization of the condition \eqref{eq:PB_Intro_PT}: we say that a finite Radon measure $\mu \in \Mm(\Omega)$ satisfies a perimeter bound condition if there exists $L > 0$ such that
\begin{equation} \label{eq:PB_intro}
|\mu(E^{1} \cap \Omega)| \le L\, \Per(E)\, \quad \text{ for all measurable sets } \, E\subset \Omega,
\end{equation}
where $E^1$ is the set of points where the Lebesgue density of $E$ is 1, also called the measure theoretic interior of $E$. In this case, we write $\mu \in \PB_{L}(\Omega)$, and we also set $\PB(\Omega) := \bigcup_{L > 0} \PB_{L}(\Omega)$. We notice that in this definition we are testing the perimeter bound condition on the entire family of sets with finite perimeter. We actually prove that, if $\Omega$ is weakly regular (that is, it is a bounded open set such that $\Haus{n - 1}(\partial \Omega) = P(\Omega) < + \infty$) then \eqref{eq:PB_Intro_PT} and \eqref{eq:PB_intro} are equivalent (see Lemma \ref{lem:open_sets_PB} and Proposition \ref{prop:adm_PB_div}(2)). However, our apparently stronger definition is motivated by the fact that the divergence measure of any $F \in \DM^\infty(\Omega)$ satisfies \eqref{eq:PB_intro} for $L = \|F\|_{L^\infty(\Omega; \R^n)}$, due to the generalized Gauss--Green formula: if $E \subseteq \Omega$ is a set of finite perimeter in $\R^n$ such that either $E \Subset \Omega$ or $\Omega$ is weakly regular, then
\begin{equation} \label{eq:GG_intro}
\div F(E^1 \cap \Omega)  = - \int_{\redb E} {\rm Tr}^i(F, \redb E) \, d \Haus{n-1},
\end{equation}
where ${\rm Tr}^i(F, \redb E)$ is the interior normal trace of $F$ and satisfies
\begin{equation*}
\|{\rm Tr}^i(F, \redb E)\|_{L^\infty(\redb E; \Haus{n-1})}  \le \|F\|_{L^\infty(E; \R^n)}.
\end{equation*}
This and similar formulas have been widely studied in the literature \cite{ChenComiTorres, CTZ, comi2022representation, comi2017locally, crasta2017anzellotti, crasta2019pairings, Silhavy1, Silhavy2}, but we present in Section \ref{sec:div_meas} some refinements which are suitable for our purposes.

Overall, we obtain the following refinement of \cite[Theorem 8.2]{Phuc_Torres}: if $\Omega$ is a bounded open set with Lipschitz boundary, then
\begin{equation*}
\mu \in \PB(\Omega) \iff \mu \in BV(\Omega)^* \iff \mu = \div F \ \text{ for some } F \in \DM^{\infty}(\Omega).
\end{equation*}
Thanks to the Gauss-Green formula \eqref{eq:GG_intro}, it is indeed intuitively clear that, if $\mu = \div F$ for some $F \in \DM^{\infty}(\Omega)$, then $\mu \in \PB_L(\Omega)$ for $L = \|F\|_{L^\infty(\Omega; \R^n)}$. However, it is interesting to ask whether some kind of opposite implication holds true; that is, if, given $\mu \in \PB_L(\Omega)$, we can find $F \in \DM^{\infty}(\Omega)$ such that $\mu = \div F$ and $\|F\|_{L^\infty(\Omega; \R^n)} \le L$. Such result would be the natural extension of \cite[Lemma 7.3]{Phuc_Torres}, which states that, if $\Omega$ is a bounded open set with Lipschitz boundary and $\T \in W^{1,1}_0(\Omega)^*$, then there exists $F \in \DM^{\infty}(\Omega)$ such that
\begin{equation} \label{eq:intro_W_11_0}
\T(u) = \int_\Omega u^* \, d \div F \ \text{ for all } u \in W^{1,1}_0(\Omega) \cap L^\infty(\Omega)
\end{equation}
and we have
\begin{equation*}
\|\T\|_{W^{1,1}_0(\Omega)^*} = \min \left \{ \|G\|_{L^{\infty}(\Omega; \R^n)} : G \in \DM^\infty(\Omega) \text{ satisfying } \eqref{eq:intro_W_11_0} \right \}.
\end{equation*}
Therefore, we need to show that, if $\mu \in \PB_L(\Omega)$, then the related functional $\T_\mu$ is well defined on the whole $W^{1,1}_0(\Omega)$ and it satisfies a bound for Sobolev functions similar to the perimeter bound; that is, 
\begin{equation*}
\left | \int_{\Omega} u^{*} \, d \mu \right | \le L \int_{\Omega} |\nabla u| \, dx \ \text{ for all } u \in W^{1,1}_0(\Omega),
\end{equation*}
so that we get
\begin{equation*}
\|\T_\mu\|_{W^{1,1}_0(\Omega)^*} = \sup \left \{ \int_\Omega u^{*} \, d\mu : u \in W^{1,1}_0(\Omega), \| \nabla u \|_{L^1(\Omega; \R^n)} \le 1 \right \} \le L.
\end{equation*}
However, as already noted, it is in general not true that, if $\mu \in \PB(\Omega)$, then $u^* \in L^1(\Omega; |\mu|)$ for unbounded $BV$ or even Sobolev functions (we give an example of such case in Remark \ref{rem:counterexample}).
Therefore, it turns out that we need a stronger assumption on the measure $\mu$: we say that $\mu$ is \textit{admissible}, if $|\mu| \in BV(\Omega)^*$, see Definition \ref{def:muadmissible}. While it might be challenging to verify the admissibility assumption in general, we notice that, if $\Omega$ is an open bounded set with Lipschitz boundary, a relevant subclass of admissible measures consists of those that can be written as $\mu = h \Leb{n} + \gamma \Haus{n-1} \res \Gamma$, where $h \in L^{q}(\Omega)$ for some $q>n$, $\gamma \in L^{\infty}(\Gamma; \Haus{n-1})$ and $\Gamma \Subset \Omega$ is a compact set with finite $\Haus{n-1}$ measure and suitable decay properties on balls (see Example \ref{ex:tMomega}). 
If $\mu \in \Mm(\Omega)$ is admissible, it is indeed true that $u^\lambda \in L^1(\Omega; |\mu|)$ for all $u \in BV(\Omega)$ and $\lambda : \Omega \to [0, 1]$ Borel, thus including also the case $\lambda \equiv \frac{1}{2}$. A key step in the proof is Theorem \ref{thm:smooth_lambda_approx}, which is a refined version of the Anzellotti-Giaquinta approximation for functions $u \in BV(\Omega)$, since it additionally guarantees pointwise $\Haus{n-1}$-a.e. convergence to any given $\lambda$-representative $u^{\lambda}$. It is also worth mentioning that such a result is of interest in itself, particularly concerning the theory of $\lambda$-pairings mentioned earlier, since it provides a new way of deriving \eqref{eq:pairing_bound_lambda_intro} and it allows obtaining a new bound for the pairings in terms of the area functional, see Theorem \ref{lemma:pairlambda-vs-area}; moreover, it plays a fundamental role in several other proofs along the paper. All in all, we can show that, if $\Omega$ is an open bounded set with Lipschitz boundary, $L > 0$ and $\mu \in \PB_L(\Omega)$ is admissible, then $\mu = \div F$ for some $F \in \DM^{\infty}(\Omega)$ with $\|F\|_{L^\infty(\Omega; \R^n)} \le L$ and 
\begin{equation} \label{eq:intro_u_lambda_est}
\left | \int_{\Omega} u^{\lambda} \, d \mu \right | \le L \left(|Du|(\Omega) + \int_{\de \Omega} |{\rm Tr}_{\partial \Omega}(u)|\, d\Haus{n-1}\right) \ \text{ for all } u \in BV(\Omega) \text{ and } \lambda : \Omega \to [0, 1] \text{ Borel,}
\end{equation}
see Proposition \ref{prop:trunc-smooth-coarea} and Lemma \ref{lem:existence_optimal_T}.
This means that the constant $L$ remains unchanged from the perimeter bound for sets to the estimate involving functions of bounded variations, and in this way it refines the results of \cite{Phuc_Torres}.
It is also interesting to notice that, if $\mu \in \Mm(\Omega)$ is admissible, then, for any vector field $F$ such that $\div F = \mu$, the $\lambda$-pairing $(F, Du)_{\lambda}$ is well-defined for all $BV$ function $u$ and all Borel functions $\lambda$ with values in $[0,1]$ (see Remark \ref{rem:div_admissible_Leibniz}). In addition, if $\Omega$ is an open bounded set with Lipschitz boundary, the action of the functional represented by an admissible measure $\mu$ can be expressed in terms of the pairing between $F \in \DM^\infty(\Omega)$, such that $\mu = \div F$, and functions in $BV(\Omega)$; that is,
\begin{equation*}
\T_{\mu} (u) = \int_\Omega u^* \, d \mu = - (F, Du)_*(\Omega) - \int_{\partial \Omega} {\rm Tr}_{\partial \Omega}(u) {\rm Tr}^i(F, \partial \Omega) \, d \Haus{n-1} \ \text{ for all } u \in BV(\Omega)
\end{equation*}
where ${\rm Tr}_{\partial \Omega}(u) \in L^1(\partial \Omega; \Haus{n-1})$ and ${\rm Tr}^i(F, \partial \Omega) \in L^\infty(\partial \Omega; \Haus{n-1})$ are the interior trace of $u$ and the interior normal trace of $F$ on the boundary of $\Omega$, respectively. This fact relates to the previous observations on the pairing functional \eqref{eq:pairing_functional_intro}, since, if $\mu = \div F$ for some $F \in \DM^\infty(\Omega)$, then 
\begin{equation*}
\T_{\mu} (u) = - (F, Du)_*(\Omega) \ \text{ for all } u \in BV_0(\Omega);
\end{equation*}
that is, $u \in BV(\Omega)$ with ${\rm Tr}_{\partial \Omega}(u) = 0$.

Finally, we prove that the perimeter bound and the admissibility conditions combined ensure enough stability under a suitable type of smooth approximation procedure (Proposition \ref{prop:muapprox}). More precisely, if $\Omega$ is an open bounded set with Lipschitz boundary and $\mu \in \PB_{L}(\Omega)$ is an admissible measure, then $\mu = \div F$ for some $F \in \DM^\infty(\Omega)$ with $\|F\|_{L^\infty(\Omega; \R^n)} \le L$ and so, relying on the fact that the Anzellotti-Giaquinta--type regularization of a vector field almost preserves its $L^{\infty}$ norm, we can approximate $\mu$ in the weak--$\ast$ sense by a sequence of absolutely continuous measures $\mu_j \Leb{n} \in \PB_{L_j}(\Omega)$ with smooth density functions, where $L_j \to L$ as $j \to + \infty$.

All these facts are of great relevance in their application to the study of the prescribed mean curvature measure equation  
\begin{equation} \label{eq:PMCM}  \div \left (\frac{\nabla u}{\sqrt{1 + |\nabla u|^{2}}} \right) = \mu \quad \text{on} \ \Omega, \end{equation}
for an admissible measure $\mu \in \PB_L(\Omega)$ for some $L \in (0, 1)$, which is the core of our subsequent work \cite{LeoComi}. Following the approach of \cite{scheven2016bv}, we consider the following weak formulation of the equation \eqref{eq:PMCM}.

\begin{definition}
We say that $u \in BV(\Omega)$ is a \textit{weak solution to the prescribed mean curvature measure equation} if there exist $T \in L^{\infty}(\Omega;\R^{n})$ and a Borel function $\lambda:\Omega\to [0,1]$ such that
\begin{align*} 
 \|T\|_{L^{\infty}(\Omega; \R^{n})} & \le 1,\\
\div T & = \mu\, \text{ on } \Omega, \\
 (T, Du)_{\lambda} & = \sqrt{1 + |D u|^{2}} - \sqrt{1 - |T|^{2}} \Leb{n} \, \text{ on } \Omega,
\end{align*}
where the last two identities involve scalar Radon measures in $\Mm(\Omega)$ and $\lambda=\lambda_{\mu}$ is the characteristic function of a Borel set satisfying $(1-\lambda)\mu = \mu^{+}$, where $\mu^\pm$ are the positive and negative parts of $\mu$.
\end{definition}

The existence of weak solutions of \eqref{eq:PMCM} is obtained via the direct method of Calculus of Variations, which involves proving coercivity and lower semicontinuity for the functional 
\[
\Jj_{\mu}[u] := \sqrt{1+|Du|^{2}}(B) + \int_{\Omega}u^{-}\, d\mu^{+} - \int_{\Omega} u^{+}\, d\mu^{-}\,,
\]
In particular, \eqref{eq:intro_u_lambda_est} directly implies that $\Jj_{\mu}$ is coercive, and the stability of the admissibility and perimeter bound conditions are exploited in order to prove a Gamma-convergence result for a sequence of functionals $\Jj_{\mu_{j}}$ (see \cite[Theorem 6.2]{LeoComi}).

\section{Preliminaries}
\label{sec:prelim}

Through the rest of the paper, we work in an open set $\Omega \subset \R^n$. We denote by $\Leb{n}$ the Lebesgue measure, and by $\Haus{m}$ the $m$-dimensional Hausdorff measure, for $m \in [0, n]$, although we shall focus on the case $m = n-1$. Given $x \in \R^n$ and $r > 0$, we denote by $B_r(x)$ the open ball centered in $x$ with radius $r$, and, if $x = 0$, we simply write $B_r$.
We denote by $\Mm(\Omega)$ the space of finite Radon measures on $\Omega$, and by $\MH(\Omega)$ the space of measures in $\Mm(\Omega)$ that are absolutely continuous with respect to $\Haus{n-1}$; that is, 
\begin{equation}\label{def:MH}
\MH(\Omega) := \{ \mu \in \Mm(\Omega) : |\mu|(B) = 0 \text{ for all Borel sets } B \subset \Omega \text{ such that } \Haus{n-1}(B) = 0 \}.
\end{equation}
Given two measures $\mu_1, \mu_2 \in \Mm(\Omega)$, we say that $\mu_1 \le \mu_2$ on $\Omega$ if $\mu_1(B) \le \mu_2(B)$ for all Borel sets $B \subseteq \Omega$. If $\mu \in \Mm(\Omega)$ satisfies $\mu \ge 0$ on $\Omega$, then we say that $\mu$ is nonnegative. Thanks to the Hahn's decomposition theorem, we know that for any $\mu \in \Mm(\Omega)$ there exist two nonnegative measures $\mu^+$ and $\mu^-$ (the positive and negative part of $\mu$, respectively), which satisfy $\mu = \mu^+ - \mu^-$ and are concentrated on mutually disjoint Borel sets $\Omega_+,\Omega_-$ such that $\Omega = \Omega_+ \cup \Omega_{-}$. In particular, we have $\mu_{+} = \mu \restrict \Omega_+$, $\mu_{-} = -\mu \restrict \Omega_-$, and $|\mu| = \mu^+ + \mu^-$. In addition, by Lebesgue-Besicovitch differentiation theorem, for $|\mu|$-a.e. $x \in \Omega$ we have
\begin{equation} \label{eq:mu_pm_differentiation}
\frac{d \mu^\pm}{d |\mu|}(x) = \begin{cases} 1 & \text{ if } x \in \Omega^{\pm} \\
0 & \text{ if } x \in \Omega^{\mp}.
\end{cases}
\end{equation}

Given $\mu \in \Mm(\Omega)$, Radon-Nikodym theorem and Lebesgue's decomposition theorem ensure that we can decompose it as $$\mu = \mu^{ac} + \mu^s$$
where $\mu^{ac}= g \Leb{n}$, for some $g \in L^1(\Omega)$, is the absolutely continuous part and $\mu^s$ is the singular part. 

\subsection{Functions of bounded variation} We say that $u \in L^1(\Omega)$ is a function of bounded variation, and we write $u \in BV(\Omega)$, if its distributional gradient $Du$ is a vector valued Radon measure whose total variation $|Du|$ is a finite measure on $\Omega$. $BV(\Omega)$ is a Banach space once equipped with the norm
$$ \|u\|_{BV(\Omega)} = \|u\|_{L^1(\Omega)} + |Du|(\Omega).$$

However, the convergence induced by this norm is too strong, and hence it is customary to consider a weaker type of convergence for sequences of $BV$ functions, the {\em strict convergence}: given $u \in BV(\Omega)$, we say that a sequence $(u_j)_{j \in \N}$ converges to $u$ in $BV(\Omega)$-strict if
\begin{equation*}
\lim_{j \to + \infty} \|u - u_j\|_{L^1(\Omega)} + \left | |Du|(\Omega) - |Du_j|(\Omega) \right | = 0.
\end{equation*}
As customary, we denote by $BV_{\rm loc}(\Omega)$ the local version of the space $BV(\Omega)$.

Following the notation of \cite[Section 3.6]{AFP}, we say that a function $u \in L^{1}_{\rm loc}(\Omega)$ has approximate limit at $x \in \Omega$ if there exists $z \in \R$ such that
\begin{equation} \label{eq:approx_lim_def}
\lim_{r \to 0} \mean{B_r(x)} |u(y) - z| \, dy = 0\,,
\end{equation}
and we denote by $\widetilde{u}(x)$ the value, $z$, of the approximate limit of $u$ at $x$. In this case, we say that $x$ is a Lebesgue point of $u$. For a given measurable set $E \subset \R^n$, we define the {\em measure theoretic interior} of $E$ as
\[
E^1 := \left \{ x \in \R^n : \lim_{r \to 0} \frac{|E \cap B_r(x)|}{|B_r(x)|} = 1 \right \} = \left \{ x \in \R^n : \widetilde{\chi_E}(x) = 1 \right \}.
\] 
 
As customary, the approximate discontinuity set $S_{u}$ is defined as the set of points where the approximate limit does not exist. In addition, we say that $x$ belongs to $J_u$ (the set of approximate jump points of $u$) if there exists $a, b \in \R$, $a \neq b$, and $\nu \in \Sph^{n - 1}$ such that
\begin{equation} \label{eq:approx_jump_def}
\lim_{r \to 0} \mean{B^{+}_{r}(x, \nu)} |u(y) - a| \, dy = 0 \ \text{ and } \ \lim_{r \to 0} \mean{B^{-}_{r}(x, \nu)} |u(y) - b| \, dy = 0,
\end{equation}
where $B^{\pm}_{r}(x, \nu) := \{ y \in B_{r}(x) : \pm ( y - x) \cdot \nu \ge 0 \}$. The triplet $(a, b, \nu)$ is uniquely determined by \eqref{eq:approx_jump_def} up to a permutation of $(a, b)$ and a change of sign of $\nu$, and we denote it by $(u^{+}(x), u^{-}(x), \nu_{u}(x))$. For the approximate traces $u^{\pm}(x)$, we adopt the convention of having $u^{+}(x) > u^{-}(x)$. We can actually extend the approximate traces also for $x \in \Omega \setminus S_{u}$, by setting $u^{+}(x) = u^{-}(x) = \widetilde{u}(x)$.

Given $u \in BV_{\rm loc}(\Omega)$, following \cite[Corollary 3.80]{AFP} we define its precise representative 
$$u^{*} : \Omega \setminus (S_u \setminus J_u) \to \R$$ by setting
\begin{equation*}
u^*(x) = \begin{cases} \widetilde{u}(x) & \text{ if } x \in \Omega \setminus S_u, \\
\displaystyle \frac{u^{+}(x) + u^{-}(x)}{2} & \text{ if } x \in J_u. \end{cases}
\end{equation*}
Since $\Haus{n - 1}(S_u \setminus J_u) = 0$ by \cite[Theorem 3.78]{AFP}, $u^{*}(x)$ is well defined for $\Haus{n - 1}$-a.e. $x \in \Omega$ and satisfies
\begin{equation} \label{eq:u_star_mean_lim}
u^*(x) = \lim_{r \to 0} \mean{B_{r}(x)} u(y) \, dy \ \text{ for } \Haus{n-1}\text{-a.e. } x \in \Omega.
\end{equation}
In addition, if we consider the extensions of the approximate traces, we also get
\begin{equation} \label{eq:u_star_u_pm_def}
u^{*}(x) = \frac{u^{+}(x) + u^{-}(x)}{2} \ \text{ for } \Haus{n-1}\text{-a.e. } x \in \Omega.
\end{equation}
In this way, we can see $u^*$ as the average between $u^+$ and $u^-$. Therefore, it seems natural to generalize $u^{*}$ by taking any convex combination of the approximate traces. More precisely, for a fixed a Borel function $\lambda:\Omega\to [0,1]$ we define the $\lambda$-representative of $u$ as
\[
u^\lambda = \lambda u^+ + (1-\lambda) u^- \ \text{ on } \Omega \setminus (S_u \setminus J_u),
\]
which is well-defined for $\Haus{n-1}$-a.e. $x \in \Omega$.
Clearly, such representative could be defined for any $\lambda \in \B_{b}(\Omega)$, that is, any bounded Borel function $\lambda : \Omega \to \R$, but only convex combinations of $u^{\pm}$ have relevant properties in relation with generalized notions of pairings between divergence-measure fields and scalar functions of bounded variation. 

We recall the standard decomposition of the distributional gradient of a function $u \in BV(\Omega)$: we have
\begin{equation*}
Du = \nabla u \, \Leb{n} + D^c u + D^j u,
\end{equation*}
where $\nabla u \in L^1(\Omega; \R^n)$ is the density of the absolutely continuous part of $Du$, $D^j u$ is the jump part, that is, $$D^j u = (u^+ - u^-) \nu_u \, \Haus{n-1} \res J_u,$$ and $D^c u$ is the Cantor part. In addition, we define the diffuse and singular parts of $Du$ as
\begin{equation*} 
D^d u = \nabla u \, \Leb{n} + D^c u \ \text{ and } \ D^{s} u = D^c u + D^j u.
\end{equation*}

Let $N > 0$ and $T_{N}$ be the truncation operator, that is, the $1$-Lipschitz map defined as
\begin{equation*}
T_{N}(t) = \begin{cases} N & \text{if} \ \  t > N, \\
t & \text{if} \ \ |t| \le N, \\
- N & \text{if} \ \ t < - N\,,
\end{cases}
\end{equation*}
then extended to functions by setting $T_{N}(u)(x) = T_{N}(u(x))$. 
We recall the statement of \cite[Proposition 3.69 (c)]{AFP}: if $u \in L^{1}_{\rm loc}(\Omega)$ and $x \in J_{u}$, then $x \in J_{T_{N}(u)}$ if and only if $T_{N}(u^{+})(x) \neq T_{N}(u^{-})(x)$, and in this case we have $T_{N}(u)^{\pm}(x) = T_{N}(u^{\pm})(x)$. Otherwise, $x \notin S_{T_{N}(u)}$ and $$\widetilde{T_{N}(u)}(x) = T_{N}(\widetilde{u})(x) = T_{N}(u^{+})(x) = T_{N}(u^{-})(x).$$
Hence, if we extend the approximate traces of $T_{N}(u)$ as above, we get
\begin{equation*}
T_{N}(u)^{\pm}(x) = T_{N}(u^{\pm})(x) \text{ for all } x \in \Omega \setminus (S_{u} \setminus J_{u}).
\end{equation*}
By exploiting the fact that
\begin{equation*}
\left | \frac{T_{N}(\alpha) + T_{N}(\beta)}{2} \right | \le \left | T_N \left ( \frac{\alpha + \beta}{2} \right ) \right | \le \left | \frac{\alpha + \beta}{2}  \right |  \ \text{ for all } \alpha, \beta \in \R,
\end{equation*}
we prove that
\begin{equation*}
|T_{N}(u)^{*}| \le |u^{*}|  \ \text{ on } \Omega \setminus (S_{u} \setminus J_{u}).
\end{equation*}

Given $u\in BV(\Omega)$, we have that $T_{N}(u) \in BV(\Omega)$ for all $N > 0$. In particular, we deduce the following well-known estimate on the precise representative of the truncation of $u$:
\begin{equation} \label{eq:Tustar-ustar}
|T_{N}(u)^{*}(x)| \le |u^{*}(x)|  \ \text{ for } \Haus{n - 1}\text{-a.e. } x \in \Omega.
\end{equation}
In addition, the $\lambda$-representative of $T_{N}(u)$ is well-defined, and we can get convergence properties and bounds for $T_{N}(u)^{\lambda}$ that are uniform in $N$ (see Proposition \ref{prop:mul} below). 
However, we notice that we cannot obtain an analogous estimate $|T_{N}(u)^{\lambda}(x)| \le |u^{\lambda}(x)|$ for $\Haus{n-1}$-a.e. $x \in \Omega$, unless $\lambda \in \{0, \frac 12, 1\}$. As an example, let $N > 0$, $\lambda \equiv \frac{1}{3}$ and 
\begin{equation*}
u(x) = \begin{cases} 2N & \text{ if } x \in B_1, \\
- \frac{5}{4} N & \text{ if } x \notin B_1.
\end{cases}
\end{equation*} 
Then, for all $x \in \partial B_1$, we have $u^{+}(x) = 2 N$ and $u^{-}(x) = - \frac{5}{4}N$, so that 
\begin{equation*}
u^{\lambda}(x) = \frac{1}{3} u^{+}(x) + \frac{2}{3} u^{-}(x) = \frac{2}{3} N \left ( 1 - \frac{5}{4} \right ) = - \frac{N}{6} \text{ for all } x \in \partial B_1.
\end{equation*}
On the other hand, notice that 
\begin{equation*}
T_{N}(u)^{\lambda}(x) = \frac{1}{3} T_{N}(u)^+(x) + \frac{2}{3} T_{N}(u)^-(x) = \frac{N}{3} - \frac{2N}{3} = - \frac{N}{3} \text{ for all } x \in \partial B_1,
\end{equation*}
so that we do not have $|T_{N}(u)^{\lambda}| \le |u^{\lambda}|$ on $\partial B_1$. Nevertheless, we are able to obtain the following result (which is a somewhat modified version of \cite[Proposition 3.4]{crasta2019pairings}).

\begin{proposition}\label{prop:mul}
Let $u \in BV_{\rm loc}(\Omega)$, $\lambda:\Omega\to [0,1]$ be a Borel function and $N > 0$. For all $x \in \Omega \setminus (S_u \setminus J_u)$ we set 
\begin{align}\label{eq:upmlambda}
\nonumber
\mul(x) &:= \min \Big\{\lambda(x) |u^{+}(x)| + (1-\lambda(x)) |u^{-}(x)|\ ,\\ 
&\quad\qquad |u^{*}(x)| +\left |\lambda(x) -\frac{1}{2}\right|\big(|u^{+}(x)| + |u^{-}(x)|\big)\Big\}\,.
\end{align}
Then $\mul(x)$ is well defined for $\Haus{n - 1}$-a.e. $x \in \Omega$, and it satisfies
\begin{equation}\label{eq:TuLambda_estimate}
|T_{N}(u)^{\lambda}(x)| \le \mul(x) \text{ and } |u^{\lambda}(x)| \le \mul(x)  \quad \text{for $\Haus{n - 1}$-a.e. $x \in \Omega$.}
\end{equation}
Moreover, we have
\begin{equation} \label{eq:precise_repr_truncated_lambda2}
T_{N}(u)^{\lambda}(x) = u^{\lambda}(x)
\end{equation}
for all $x \in \Omega$ such that $- N \le u^{-}(x) \le u^{+}(x) \le N$, which in turn implies $T_{N}(u)^{\lambda}(x) \to u^{\lambda}(x)$ for $\Haus{n - 1}$-a.e. $x \in \Omega$, as $N\to +\infty$.
\end{proposition}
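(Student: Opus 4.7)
The plan is to establish the two bounds in \eqref{eq:TuLambda_estimate} separately, using two algebraic decompositions of $u^{\lambda}$ that directly correspond to the two terms inside the minimum defining $\mul$, and then to deduce the truncation identity \eqref{eq:precise_repr_truncated_lambda2} as a trivial pointwise consequence of the $\Haus{n-1}$-a.e. finiteness of $u^{\pm}$. Well-definedness of $\mul(x)$ for $\Haus{n-1}$-a.e. $x$ is immediate from \cite[Theorem 3.78]{AFP} recalled in the excerpt: the extended approximate traces $u^{\pm}$ and the precise representative $u^{*}$ are finite real numbers on $\Omega \setminus (S_u \setminus J_u)$, and $\Haus{n-1}(S_u \setminus J_u)=0$.

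For the bound on $u^{\lambda}$, I would use two identities. The direct definition gives the triangle bound
\[
|u^{\lambda}| = |\lambda u^{+} + (1-\lambda) u^{-}| \le \lambda |u^{+}| + (1-\lambda)|u^{-}|.
\]
Rewriting instead
\[
u^{\lambda} = \frac{u^{+} + u^{-}}{2} + \left(\lambda - \tfrac{1}{2}\right)(u^{+} - u^{-}) = u^{*} + \left(\lambda - \tfrac{1}{2}\right)(u^{+} - u^{-})
\]
(by \eqref{eq:u_star_u_pm_def}) and applying the triangle inequality again yields
\[
|u^{\lambda}| \le |u^{*}| + \left|\lambda - \tfrac{1}{2}\right|\bigl(|u^{+}| + |u^{-}|\bigr).
\]
Taking the minimum of the two gives $|u^{\lambda}| \le \mul$. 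For the analogous bound on $T_{N}(u)^{\lambda}$, I would invoke the identity $T_{N}(u)^{\pm}(x) = T_{N}(u^{\pm}(x))$ recalled from \cite[Proposition 3.69]{AFP} in the preliminaries, which allows to run the same computation verbatim with $u^{\pm}$ replaced by $T_{N}(u^{\pm})$ and $u^{*}$ replaced by $T_{N}(u)^{*}$. The first decomposition, combined with the pointwise bound $|T_{N}(t)| \le |t|$, gives $|T_{N}(u)^{\lambda}| \le \lambda |u^{+}| + (1-\lambda)|u^{-}|$; the second decomposition, combined with \eqref{eq:Tustar-ustar}, gives $|T_{N}(u)^{\lambda}| \le |u^{*}| + |\lambda - \tfrac{1}{2}|(|u^{+}| + |u^{-}|)$. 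Taking the minimum produces the claimed bound.

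Finally, identity \eqref{eq:precise_repr_truncated_lambda2} follows at once: if $-N \le u^{-}(x) \le u^{+}(x) \le N$, then $T_{N}(u^{\pm}(x)) = u^{\pm}(x)$, hence $T_{N}(u)^{\lambda}(x) = \lambda(x) u^{+}(x) + (1-\lambda(x)) u^{-}(x) = u^{\lambda}(x)$. Since $u^{\pm}$ are finite $\Haus{n-1}$-a.e., for any such $x$ the condition $-N \le u^{-}(x) \le u^{+}(x) \le N$ is satisfied once $N \ge \max\{|u^{+}(x)|,|u^{-}(x)|\}$, which gives the pointwise $\Haus{n-1}$-a.e. convergence $T_{N}(u)^{\lambda}(x) \to u^{\lambda}(x)$ as $N\to +\infty$. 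There is no serious obstacle in this argument: the only subtlety is purely bookkeeping, namely tracking which of the two decompositions of $u^{\lambda}$ is being used and applying the triangle inequality in the correct order; everything else is elementary algebra combined with properties already collected in the preliminaries.
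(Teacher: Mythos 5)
Your proposal is correct and follows essentially the same route as the paper: the triangle inequality on the defining convex combination for the first term of the minimum, the decomposition through the precise representative $u^{*}$ (equivalently, adding and subtracting $T_{N}(u)^{*}$ and using \eqref{eq:Tustar-ustar}) for the second term, and the pointwise identity $T_{N}(u)^{\pm}=T_{N}(u^{\pm})$ for the truncation statement and the $\Haus{n-1}$-a.e. convergence. No gaps.
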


\begin{proof}
Assuming $x\in \Omega$ is such that $- N \le u^{-}(x) \le u^{+}(x) \le N$, we have $T_{N}(u)^{\pm}(x) = u^{\pm}(x)$. This immediately implies that
\begin{equation*}
T_{N}(u)^{\lambda}(x) = \lambda(x) T_{N}(u)^{+}(x) + (1 - \lambda(x)) T_{N}(u)^{-}(x) = \lambda(x) u^{+}(x) + (1 - \lambda(x)) u^{-}(x) = u^{\lambda}(x)
\end{equation*}
and this shows \eqref{eq:precise_repr_truncated_lambda2}. Since $u^{+}(x)$ and $u^{-}(x)$ are well-defined and finite for $\Haus{n - 1}$-a.e. $x \in \Omega$, we notice that \eqref{eq:precise_repr_truncated_lambda2} easily implies $T_{N}(u)^{\lambda}(x) \to u^{\lambda}(x)$ as $N \to + \infty$ for $\Haus{n - 1}$-a.e. $x \in \Omega$.

Given that $\Haus{n-1}(S_u \setminus J_u) = 0$, it is clear that $\mul(x)$ is well defined for $\Haus{n - 1}$-a.e. $x \in \Omega$. Now, in order to prove the first inequality in \eqref{eq:TuLambda_estimate}, we must show two separate inequalities. The first inequality directly follows from the definition of $T_{N}$:
\begin{align*}
|T_{N}(u)^{\lambda}(x)| &\le \lambda(x) |T_{N}(u)^{+}(x)| + (1-\lambda(x)) |T_{N}(u)^{-}(x)| \\  
&\le \lambda(x) |u^{+}(x)| + (1-\lambda(x)) |u^{-}(x)|\,.
\end{align*}
The second inequality is obtained by adding and subtracting $T_{N}(u)^{*}$ and by using \eqref{eq:Tustar-ustar}, that is, for $\Haus{n-1}$-a.e. $x\in \Omega$ we have
\begin{align*}
|T_{N}(u)^{\lambda}(x)| &\le |T_{N}(u)^{*}(x)| + |T_{N}(u)^{\lambda}(x) - T_{N}(u)^{*}(x)| \\
&\le |u^{*}(x)| + |(\lambda(x) - 1/2) T_{N}(u)^{+}(x) + (1/2 - \lambda(x)) T_{N}(u)^{-}(x)|\\
&\le |u^{*}(x)| + |\lambda(x) - 1/2|\big(|u^{+}(x)| + |u^{-}(x)|\big)\,.
\end{align*}
By combining the two previous estimates, we get \eqref{eq:TuLambda_estimate} as wanted. Finally, we argue in a similar way with $|u^{\lambda}|$ to conclude.
\end{proof}

\begin{remark}
We observe that \eqref{eq:TuLambda_estimate} gives a finer upper bound for $T_{N}(u)^{\lambda}$ than the one given in \cite[Proposition 3.4]{crasta2019pairings}. In addition, we notice that, if $\lambda \equiv \frac{1}{2}$, then \eqref{eq:TuLambda_estimate} reduces to the standard estimate \eqref{eq:Tustar-ustar}. If we consider instead the cases $\lambda \equiv 1$ and $\lambda \equiv 0$, then we have $M[u,1] = |u^+|$ and $M[u, 0] = |u^-|$.
\end{remark}

Given a function $u \in BV(\Omega)$ we define $\sqrt{1 + |Du|^{2}}$ as the distributional area factor of the graph $\{ (x, t) : u(x) = t \} \subset \Omega \times \R$, as done in  \cite{giusti1984minimal}. While the expression makes sense for $\Leb{n}$-a.e. $x \in \Omega$ for a function $u \in W_{\rm loc}^{1, 1}(\Omega)$, when $u \in BV(\Omega)$ we more generally define
\begin{equation*} \int_{U} \sqrt{1 + |Du|^{2}} := \sup \left \{ \int_{\Omega} \eta + u \div \phi \, dx : (\phi, \eta) \in C^{1}_{c}(U; \R^{n} \times \R), | (\phi, \eta) | \le 1 \right \}, \end{equation*}
for any open set $U \subset \Omega$.
It is then easy to see that we have
\begin{equation} \label{area_factor_decomposition_eq} \sqrt{1 + |Du|^{2}} = \sqrt{1 + |\nabla u|^{2}} \Leb{n} + |D^{s} u|. \end{equation}
Given a measurable set $E$, we say that $E$ is a set of finite perimeter in $\Omega$ if $|D\chi_{E}|(\Omega) < + \infty$, and we set $\Per(E;\Omega) = |D\chi_{E}|(\Omega)$ to be the perimeter of $E$ in $\Omega$. When $\Omega = \R^{n}$ we simply write $\Per(E)$. We refer to \cite{AFP, maggi2012sets} for the definition and properties of $\redb E$, the {\em reduced boundary} of $E$, for which one has $\Per(E;\Omega) = \Haus{n -1}(\redb E \cap \Omega)$. In particular, we recall that $\chi_E^+ = \chi_{E^1 \cup \partial^* E}$ and $\chi_E^- = \chi_{E^1}$, so that we deduce the following explicit formulas for the precise and $\lambda$-representatives of $\chi_E$:
\begin{equation} \label{eq:repr_chi_E}
\chi_E^* = \chi_{E^1} + \frac{1}{2} \chi_{\redb E} \ \text{ and } \ \chi_E^\lambda = \chi_{E^1} + \lambda \chi_{\redb E} \quad \Haus{n-1}\text{-a.e. in } \Omega.
\end{equation}
If $n\ge2$, a remarkable property of sets of finite perimeter $E$ in $\R^n$ is the isoperimetric inequality \cite[Theorem 3.46]{AFP}: there exists a constant $c_n > 0$ such that
\begin{equation*}
\min \{|E|, |\R^n \setminus E| \}^{1 - \frac{1}{n}} \le c_n \Per(E).
\end{equation*}
As a consequence, as long as $|\Omega| < + \infty$, for all sets $E \subset \Omega$ of finite perimeter in $\R^n$ we have
\begin{equation*}
|E \cap \Omega| \le |\Omega|^{\frac{1}{n}} |E|^{1 - \frac{1}{n}}  \le c_n |\Omega|^{\frac{1}{n}} \Per(E),
\end{equation*}
which immediately implies
\begin{equation} \label{eq:isoper_omega}
\|\chi_E\|_{BV(\Omega)} = |E \cap \Omega| + \Per(E, \Omega)  \le (c_n |\Omega|^{\frac{1}{n}} + 1) \Per(E).
\end{equation}

If $\Omega$ has bounded Lipschitz boundary, we denote by ${\rm Tr}_{\partial \Omega}(u)$ the trace of $u$ over $\partial \Omega$ and recall that the trace operator ${\rm Tr}_{\partial \Omega}:BV(\Omega)\to L^{1}(\partial \Omega; \Haus{n-1})$ is linear, surjective and continuous with respect to the topology induced by the strict convergence (see for instance \cite[Theorem 3.88]{AFP}). 
For the ease of the reader, we prove a useful result concerning a Cavalieri-type inequality involving the trace operator for $BV$ functions.

\begin{lemma}\label{lem:slicetrace}
Let $\Omega$ be an open, bounded set with Lipschitz boundary. Let $f\in BV(\Omega)$ be a nonnegative function, and set $E_{t}=\{x:\ f(x)>t\}$ for $t>0$. Then
\[
\int_{0}^{+\infty} \int_{\de\Omega} {\rm Tr}_{\de\Omega}(\chi_{E_{t}})\, d\Haus{n-1}\, dt \le \int_{\de\Omega} {\rm Tr}_{\de\Omega}(f)\, d\Haus{n-1}\,.
\]
\end{lemma}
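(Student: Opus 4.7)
The plan is to establish a pointwise inequality on $\partial\Omega$ between the trace of each superlevel set and $\chi_{\{{\rm Tr}_{\partial\Omega}(f) \ge t\}}$, then conclude via Cavalieri's formula. First, I would note that by the coarea formula, $E_{t}$ has finite perimeter in $\Omega$ for $\Leb{1}$-a.e.\ $t > 0$, and for such $t$ the trace ${\rm Tr}_{\partial\Omega}(\chi_{E_{t}}) \in L^{1}(\partial\Omega; \Haus{n-1})$ is well-defined. Using the $L^{1}$-mean characterization of the $BV$-trace, one checks that ${\rm Tr}_{\partial\Omega}(\chi_{E_{t}})$ is $\{0, 1\}$-valued $\Haus{n-1}$-a.e.: if its value at $x$ were some $s \in (0,1)$, expanding $|\chi_{E_{t}} - s|$ would give
\[
\frac{s\,|B_{r}(x) \cap \Omega \setminus E_{t}| + (1-s)\,|B_{r}(x) \cap \Omega \cap E_{t}|}{|B_{r}(x) \cap \Omega|} \to 0,
\]
while the left-hand side is bounded below by $\min(s, 1-s) > 0$, a contradiction.

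The main step is the pointwise bound ${\rm Tr}_{\partial\Omega}(\chi_{E_{t}})(x) \le \chi_{\{{\rm Tr}_{\partial\Omega}(f) \ge t\}}(x)$ for $\Leb{1}$-a.e.\ $t>0$ and $\Haus{n-1}$-a.e.\ $x \in \partial\Omega$. I would argue by contradiction: assume ${\rm Tr}_{\partial\Omega}(\chi_{E_{t}})(x) = 1$, so that $|E_{t} \cap B_{r}(x) \cap \Omega|/|B_{r}(x) \cap \Omega| \to 1$, while $s := {\rm Tr}_{\partial\Omega}(f)(x) < t$. Then the elementary bound $|f - s| \ge t - s$ on $E_{t}$ would force
\[
\frac{1}{|B_{r}(x) \cap \Omega|} \int_{B_{r}(x) \cap \Omega} |f(y) - s|\, dy \,\ge\, (t - s)\,\frac{|E_{t} \cap B_{r}(x) \cap \Omega|}{|B_{r}(x) \cap \Omega|} \,\to\, t - s > 0,
\]
contradicting the definition of ${\rm Tr}_{\partial\Omega}(f)(x)$; hence ${\rm Tr}_{\partial\Omega}(f)(x) \ge t$.

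Finally, integrating this pointwise inequality in $x$ against $\Haus{n-1}$ on $\partial\Omega$, then in $t$ over $(0,+\infty)$, applying Fubini to the nonnegative integrand, and using Cavalieri's formula together with ${\rm Tr}_{\partial\Omega}(f) \ge 0$ yields
\[
\int_{0}^{+\infty} \!\!\int_{\partial\Omega} {\rm Tr}_{\partial\Omega}(\chi_{E_{t}}) \, d\Haus{n-1} \, dt \,\le\, \int_{0}^{+\infty} \Haus{n-1}(\{{\rm Tr}_{\partial\Omega}(f) \ge t\}) \, dt \,=\, \int_{\partial\Omega} {\rm Tr}_{\partial\Omega}(f)\, d\Haus{n-1}.
\]
The main obstacle I foresee is the careful bookkeeping of the $\Leb{1}$-null set of exceptional levels $t$ (where $E_{t}$ may fail to have finite perimeter) and the $\Haus{n-1}$-null sets of exceptional boundary points (where one or the other trace fails to exist); both are routinely absorbed by Fubini.
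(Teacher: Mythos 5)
Your proof is correct, but it follows a genuinely different route from the paper's. The paper's argument stays entirely at the level of the mean-value characterization of the trace from \cite[Theorem 3.87]{AFP}: it writes $f=\int_0^{+\infty}\chi_{E_t}\,dt$ inside the average over $\Omega\cap B_r(x)$, exchanges the integrals by Tonelli, and then applies Fatou's lemma in $t$ to pass the limit $r\to 0$ inside, recognizing the resulting $\liminf$ as ${\rm Tr}_{\de\Omega}(\chi_{E_t})(x)$. You instead prove the stronger, pointwise level-set comparison ${\rm Tr}_{\de\Omega}(\chi_{E_t})(x)\le \chi_{\{{\rm Tr}_{\de\Omega}(f)\ge t\}}(x)$ for $\Leb{1}$-a.e.\ $t$ and $\Haus{n-1}$-a.e.\ $x$, and then integrate using Cavalieri's formula for the nonnegative function ${\rm Tr}_{\de\Omega}(f)$ on $\de\Omega$. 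Both arguments rest on the same $L^1$-mean characterization of the trace; yours is somewhat longer (it needs the coarea formula to know $\chi_{E_t}\in BV(\Omega)$ for a.e.\ $t$, the $\{0,1\}$-valuedness of ${\rm Tr}_{\de\Omega}(\chi_{E_t})$, and the contradiction argument comparing densities), but it yields a more structural intermediate fact — the superlevel sets of the trace dominate the traces of the superlevel sets level by level — which is strictly stronger than the integrated inequality and could be reused elsewhere. The paper's Fatou argument is shorter but only delivers the integrated statement. One minor bookkeeping point in your write-up: the inequality is integrated level by level (first in $x$ for each fixed admissible $t$, then in $t$), so the exceptional $\Haus{n-1}$-null sets depending on $t$ cause no trouble and no genuine Fubini argument on a product null set is required; the only role of Tonelli/Fubini is the measurability in $t$ of the inner integral, which is already implicit in the statement of the lemma.
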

\begin{proof}
By \cite[Theorem 3.87]{AFP}, we have
\begin{align*}
\int_{\de\Omega}{\rm Tr}_{\de\Omega}(f)(x)\, d\Haus{n-1}(x) &= \int_{\de\Omega} \lim_{r\to 0} \frac{1}{|\Omega\cap B_{r}(x)|}\int_{\Omega\cap B_{r}(x)} f(y)\, dy\, d\Haus{n-1}(x)\\
&= \int_{\de\Omega} \lim_{r\to 0} \frac{1}{|\Omega\cap B_{r}(x)|}\int_{\Omega\cap B_{r}(x)} \int_{0}^{+\infty} \chi_{E_{t}}(y)\, dt\, dy\, d\Haus{n-1}(x)\\
&= \int_{\de\Omega} \lim_{r\to 0} \int_{0}^{+\infty} \frac{1}{|\Omega\cap B_{r}(x)|}\int_{\Omega\cap B_{r}(x)} \chi_{E_{t}}(y)\, dy\, dt\, d\Haus{n-1}(x)\\
&\ge \int_{\de\Omega} \int_{0}^{+\infty} \liminf_{r\to 0} \frac{1}{|\Omega\cap B_{r}(x)|}\int_{\Omega\cap B_{r}(x)} \chi_{E_{t}}(y)\, dy\, dt\, d\Haus{n-1}(x)\\
&= \int_{\de\Omega} \int_{0}^{+\infty} {\rm Tr}_{\de\Omega}(\chi_{E_{t}})(x)\, dt\, d\Haus{n-1}(x)\\
&= \int_{0}^{+\infty} \int_{\de\Omega} {\rm Tr}_{\de\Omega}(\chi_{E_{t}})(x)\, d\Haus{n-1}(x)\, dt\,,
\end{align*}
where the inequality in the fourth line follows from Fatou's lemma, while the exchange of the integration orders is a consequence of Tonelli's theorem.
\end{proof}

\subsection{Divergence-measure fields and $\lambda$-pairings} \label{sec:div_meas}
We recall the notions of (essentially bounded) divergence-measure field and of pairing between a function of bounded variation and one such field.

\begin{definition} \label{DMdef}
A vector field $F \in L^{\infty}(\Omega; \R^{n})$ is called an {\em essentially bounded divergence-measure field}, and we write $F \in \DM^{\infty}(\Omega)$,
if $\div F\in \Mm(\Omega)$.
A vector field $F \in L^{\infty}_{\rm loc}(\Omega; \R^n)$ is called a {\em locally essentially bounded divergence-measure field}, and we write $F \in \DM^{\infty}_{\rm loc}(\Omega)$, if $F \in \DM^{\infty}(U)$ for any open set $U \subset \subset \Omega$.
\end{definition}

It has been proved by Chen and Frid \cite{CF1} (see also \cite{Frid2}, for an improved proof), that, if $F \in \DM^{\infty}(\Omega)$ and $u \in BV(\Omega) \cap L^{\infty}(\Omega)$, then the product $uF$ belongs to $\DM^{\infty}(\Omega)$; and an analogous result holds also locally. Furthermore, we recall the fact that, if $F \in \DM^{\infty}_{\rm loc}(\Omega)$, then $|\div F| \ll \Haus{n - 1}$, for which we refer to \cite{CF1} and \cite{Silhavy1}. Hence, if $u^{*}$ is the precise representative of $u$, the measure $u^{*} \div F$ is well defined, $u^{*}$ being defined $\Haus{n - 1}$-a.e. on $\Omega$.

Actually, the $L^\infty$-assumption on $u$ is not strictly necessary, as one can see by following the approach of \cite{crasta2019pairings} in defining the notion of $\lambda$-pairing.

\begin{definition} \label{BVpairing_lambda_def}
Given a vector field $F \in \DM^{\infty}_{\rm loc}(\Omega)$, a scalar function $u \in BV_{\rm loc}(\Omega)$ and a Borel function $\lambda : \Omega \to [0, 1]$ such that $u^{\lambda} \in L^1_{\rm loc}(\Omega; |\div F|)$, we define the $\lambda$-{\em pairing} between $F$ and $Du$ as the distribution $(F, Du)_\lambda$ given by
\begin{equation} \label{BV_pairing_lambda_def_eq} (F, Du)_\lambda := \div(u F) - u^{\lambda} \div F. \end{equation}
\end{definition}

Roughly speaking, the notion of $\lambda$-pairing extends the classical dot product between $F$ and $Du$. We also remark that in the case $\lambda \equiv \frac 12$ one recovers the classical pairing first considered by Anzellotti in \cite{Anzellotti_1983}, which we denote by $(F, Du)_*$. As proved in \cite{crasta2019pairings}, the $\lambda$-pairing is indeed a Radon measure. For the ease of the reader, we recall its main properties in the following statement, that summarizes \cite[Proposition 4.4]{crasta2019pairings} (see also \cite[Remark 3.6]{comi2022representation}).

\begin{theorem}\label{thm:pairlambda_Leibniz}
Let $F\in \DM^{\infty}(\Omega)$, $u\in BV(\Omega)$ and $\lambda:\Omega\to [0,1]$ be a Borel function. If $u^\lambda \in L^{1}(\Omega; |\div F|)$, then we have $\div(u F), (F, Du)_{\lambda} \in \Mm(\Omega)$, with
\begin{equation} \label{eq:Leibniz_lambda_gen}
\div(uF) = u^{\lambda} \div F + (F, Du)_{\lambda} \ \text{ on } \Omega,
\end{equation}
and 
\begin{equation} \label{eq:pairing_estimate_lambda}
|(F, Du)_\lambda| \le \|F\|_{L^\infty(\Omega; \R^n)} |Du| \ \text{ on } \Omega.
\end{equation}
\end{theorem}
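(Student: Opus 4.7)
The plan is to first establish the statement for $u \in BV(\Omega) \cap L^\infty(\Omega)$ and then extend to general $u \in BV(\Omega)$ via truncation. In the bounded case, the Chen--Frid product rule yields $uF \in \DM^\infty(\Omega)$, hence $\div(uF) \in \Mm(\Omega)$. Since $u^\lambda$ is bounded and Borel, and since $|\div F| \ll \Haus{n-1}$ is a finite Radon measure, the product $u^\lambda \div F$ defines an element of $\Mm(\Omega)$. The identity \eqref{eq:Leibniz_lambda_gen} then reduces to the very definition of $(F, Du)_\lambda$, and all three terms lie in $\Mm(\Omega)$.

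\textbf{Bound via smooth approximation.} For the estimate \eqref{eq:pairing_estimate_lambda} in the bounded case, I would approximate $u$ by smooth functions $u_\epsilon \in C^\infty(\Omega) \cap BV(\Omega)$ via the $\lambda$-adapted Anzellotti--Giaquinta regularization provided by Theorem \ref{thm:smooth_lambda_approx}, so that $u_\epsilon \to u^\lambda$ pointwise $\Haus{n-1}$-a.e., $u_\epsilon \to u$ in $L^1(\Omega)$, $\|u_\epsilon\|_{L^\infty} \le \|u\|_{L^\infty}$, and $|\nabla u_\epsilon| \Leb{n}$ converges weakly-$\ast$ to $|Du|$. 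For smooth $u_\epsilon$, the classical Leibniz rule yields
\[
\div(u_\epsilon F) = u_\epsilon \div F + F \cdot \nabla u_\epsilon \, \Leb{n}.
\]
Letting $\epsilon \to 0$, the convergence $u_\epsilon F \to uF$ in $L^1$ gives $\div(u_\epsilon F) \to \div(uF)$ in $\mathcal{D}'(\Omega)$, while bounded convergence provides $u_\epsilon \div F \to u^\lambda \div F$ weakly-$\ast$ in $\Mm(\Omega)$. Therefore $F \cdot \nabla u_\epsilon \, \Leb{n}$ converges weakly-$\ast$ to $(F, Du)_\lambda$, and the pointwise inequality $|F \cdot \nabla u_\epsilon| \le \|F\|_{L^\infty} |\nabla u_\epsilon|$ together with the lower semicontinuity of the total variation under weak-$\ast$ convergence yields $|(F, Du)_\lambda| \le \|F\|_{L^\infty(\Omega;\R^n)} |Du|$.

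\textbf{Truncation and passage to the limit.} For general $u \in BV(\Omega)$, set $u_N = T_N(u)$ and apply the bounded case to get
\[
\div(u_N F) = T_N(u)^\lambda \div F + (F, Du_N)_\lambda, \qquad |(F, Du_N)_\lambda| \le \|F\|_{L^\infty} |Du_N| \le \|F\|_{L^\infty} |Du|.
\]
In particular $\{(F, Du_N)_\lambda\}_N$ is uniformly bounded in total variation on $\Omega$. Since $u_N \to u$ in $L^1(\Omega)$, we have $\div(u_N F) \to \div(uF)$ in $\mathcal{D}'(\Omega)$. Moreover, by Proposition \ref{prop:mul}, $T_N(u)^\lambda \to u^\lambda$ pointwise $|\div F|$-a.e. with the uniform bound $|T_N(u)^\lambda| \le \mul$.

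\textbf{Main obstacle.} The crux is to pass to the limit in the term $T_N(u)^\lambda \div F$: the hypothesis gives $u^\lambda \in L^1(|\div F|)$ but not a priori $\mul \in L^1(|\div F|)$, so dominated convergence is not immediate. I would close the gap by splitting $\div F$ into its diffuse and jump contributions: on the complement of $J_u$ one has $u^+ = u^- = \widetilde u = u^\lambda$ so the bound $\mul = |u^\lambda|$ is automatic, while on $J_u$ one exploits the jump formula for $\DM^\infty$ fields, which provides the inequality $|\div F| \res J_u \le \|F\|_{L^\infty} (u^+ - u^-) \Haus{n-1} \res J_u$ and hence absorbs the extra $(u^+ - u^-)$ factor appearing in $\mul$. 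Once this integrability is secured, dominated convergence yields $T_N(u)^\lambda \div F \to u^\lambda \div F$ weakly-$\ast$ in $\Mm(\Omega)$. Subtracting from $\div(u_N F) \to \div(uF)$ identifies $(F, Du)_\lambda$ as the weak-$\ast$ limit of $(F, Du_N)_\lambda$, which places both $(F, Du)_\lambda$ and $\div(uF)$ in $\Mm(\Omega)$, confirms \eqref{eq:Leibniz_lambda_gen}, and transfers the estimate \eqref{eq:pairing_estimate_lambda} to the limit by lower semicontinuity of the total variation.
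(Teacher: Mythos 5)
A preliminary remark on scope: the paper does not prove this theorem but quotes it as a recollection of \cite[Proposition 4.4]{crasta2019pairings}, so there is no in-paper argument to compare with. Your overall route is the standard one (bounded case via the Chen--Frid product rule, the estimate \eqref{eq:pairing_estimate_lambda} by smooth approximation and lower semicontinuity, then truncation), and your derivation of the bound from Theorem \ref{thm:smooth_lambda_approx} is exactly the alternative argument the authors themselves record in Remark \ref{rem:alternate_proof_bound}; since that approximation theorem does not rely on the present statement, there is no circularity. The bounded case and the truncation scheme are sound, and you correctly isolate the one delicate point: passing to the limit in $T_N(u)^\lambda\,\div F$ requires $\mul$ (equivalently $u^{\pm}$) to lie in $L^1(\Omega;|\div F|)$, which does not follow immediately from $u^\lambda\in L^1(\Omega;|\div F|)$.

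However, the inequality you invoke to close this gap, $|\div F|\res J_u\le \|F\|_{L^{\infty}}(u^+-u^-)\,\Haus{n-1}\res J_u$, is false. Take $F=e_1\chi_{\{x_1>0\}}$, so that $\div F=\Haus{n-1}\res\{x_1=0\}$ and $\|F\|_{L^{\infty}}=1$, and $u=\delta\chi_{\{x_1>0\}}$ with $0<\delta<1$: then $u^+-u^-=\delta$ on $J_u=\{x_1=0\}$ and your inequality would force $1\le\delta$. The surface density of $\div F$ on $J_u$ is simply not controlled by the size of the jump of $u$, and even if the inequality held it would put the factor $(u^+-u^-)$ on the wrong side to be "absorbed". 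The correct ingredients are: (a) on any countably $\Haus{n-1}$-rectifiable set $\Sigma$ one has $|\div F|\res\Sigma\le 2\|F\|_{L^{\infty}(\Omega;\R^n)}\,\Haus{n-1}\res\Sigma$, by the two-sided normal-trace representation of $\div F\res\Sigma$; and (b) $\int_{J_u}(u^+-u^-)\,d\Haus{n-1}=|D^ju|(\Omega)<+\infty$. Together these give $(u^+-u^-)\in L^1(J_u;|\div F|)$, and since $\mul\le|u^\lambda|+(u^+-u^-)\chi_{J_u}$ up to $\Haus{n-1}$-negligible sets (off $J_u$ one has $\mul=|u^\lambda|$), the required integrability follows. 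This is precisely Proposition \ref{prop:summability_lambda}, i.e.\ \cite[Lemma 3.2]{crasta2019pairings}, which you could have cited outright; with that repair, the remainder of your limit passage (dominated convergence for $T_N(u)^\lambda\div F$, identification of the distributional limit, and transfer of the bound by lower semicontinuity) goes through.
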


The assumption $u^\lambda \in L^{1}(\Omega; |\div F|)$ in Theorem \ref{thm:pairlambda_Leibniz} is equivalent to the summability of any $\overline{\lambda}$-representative of $u$ with respect to the measure $|\div F|$, as showed in \cite[Lemma 3.2]{crasta2019pairings}. In particular, it is also equivalent to the summability of the majorant function $\mul$ given by \eqref{eq:upmlambda}. We collect this results in the proposition below.

\begin{proposition} \label{prop:summability_lambda}
Let $F\in \DM^{\infty}(\Omega)$ and $u\in BV(\Omega)$. Let $\lambda_1, \lambda_2 : \Omega \to [0, 1]$ be Borel functions. Then we have $u^{\lambda_1} \in L^{1}(\Omega; |\div F|)$ if and only if $u^{\lambda_2} \in L^{1}(\Omega; |\div F|)$. In addition, if $\lambda : \Omega \to [0, 1]$ is a Borel function, then $u^\lambda \in L^{1}(\Omega; |\div F|)$ if and only if $\mul \in L^{1}(\Omega; |\div F|)$.
\end{proposition}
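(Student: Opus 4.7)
The plan exploits the fact that any two $\lambda$-representatives of $u$ differ only on the jump set $J_u$, and the discrepancy is pointwise controlled by the jump amplitude $u^+ - u^-$, which yields the finite measure $|D^j u|$ with $\int_{J_u}|u^+-u^-|\, d\Haus{n-1} = |D^j u|(\Omega) < +\infty$.

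For the first equivalence, I would start from the elementary pointwise identity
$$u^{\lambda_1}(x) - u^{\lambda_2}(x) = (\lambda_1(x) - \lambda_2(x))(u^+(x) - u^-(x)) \qquad \text{for } \Haus{n-1}\text{-a.e. } x\in \Omega,$$
which yields $|u^{\lambda_1} - u^{\lambda_2}| \le |u^+ - u^-|\,\chi_{J_u}$ up to $\Haus{n-1}$-null sets (recall that $u^+=u^-$ on $\Omega\setminus(S_u\setminus J_u)$ outside $J_u$). The crucial step is then to promote this pointwise bound to an $L^1(\Omega;|\div F|)$ bound. For this I invoke the density estimate for $\DM^\infty$-fields on countably $(n-1)$-rectifiable sets (see \cite{Silhavy1}, and the refinements recalled in Section \ref{sec:div_meas}): on the rectifiable jump set $J_u$ one has $|\div F|\res J_u \le C_F\, \Haus{n-1}\res J_u$ with $C_F$ depending on $\|F\|_{L^\infty(\Omega;\R^n)}$. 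This gives
$$\int_\Omega |u^{\lambda_1}-u^{\lambda_2}|\, d|\div F| \le C_F \int_{J_u} |u^+-u^-|\, d\Haus{n-1} = C_F\, |D^j u|(\Omega) < +\infty,$$
so the integrability of $u^{\lambda_1}$ and of $u^{\lambda_2}$ against $|\div F|$ are equivalent by a triangle inequality argument.

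For the second equivalence, the direction $\mul \in L^1(\Omega;|\div F|) \Rightarrow u^\lambda \in L^1(\Omega;|\div F|)$ is immediate from the pointwise bound $|u^\lambda|\le \mul$ already supplied by Proposition \ref{prop:mul} (inequality \eqref{eq:TuLambda_estimate}). Conversely, applying the first equivalence with the constant choices $\lambda_2\equiv 1$ and $\lambda_2\equiv 0$, the hypothesis $u^\lambda\in L^1(\Omega;|\div F|)$ forces $u^+, u^- \in L^1(\Omega;|\div F|)$; since the first argument of the minimum in \eqref{eq:upmlambda} is dominated by $|u^+|+|u^-|$, we conclude $\mul \le |u^+|+|u^-|\in L^1(\Omega;|\div F|)$.

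The only non-routine ingredient is the density bound $|\div F|\res J_u \le C_F\, \Haus{n-1}\res J_u$: the mere absolute continuity $|\div F|\ll\Haus{n-1}$ would be insufficient (the Radon--Nikodym density could a priori be unbounded on $J_u$), so the full structural content of $F\in\DM^\infty$ must be invoked here. Everything else reduces to elementary arithmetic manipulations of the definition $u^\lambda=\lambda u^+ + (1-\lambda)u^-$.
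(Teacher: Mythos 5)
Your proof is correct. The second equivalence is handled exactly as in the paper: one direction is the pointwise bound $|u^{\lambda}|\le \mul$ from \eqref{eq:TuLambda_estimate}, the other reduces to $u^{+},u^{-}\in L^{1}(\Omega;|\div F|)$ via the constant choices $\lambda\equiv 1$ and $\lambda\equiv 0$ and the domination of the first term of the minimum in \eqref{eq:upmlambda} by $|u^{+}|+|u^{-}|$. The genuine difference lies in the first equivalence: the paper simply cites \cite[Lemma 3.2]{crasta2019pairings}, whereas you reprove it. Your argument --- the identity $u^{\lambda_{1}}-u^{\lambda_{2}}=(\lambda_{1}-\lambda_{2})(u^{+}-u^{-})$, supported $\Haus{n-1}$-a.e.\ on $J_{u}$, combined with $\int_{J_{u}}|u^{+}-u^{-}|\,d\Haus{n-1}=|D^{j}u|(\Omega)<+\infty$ and the density estimate $|\div F|\res\Sigma\le 2\|F\|_{L^{\infty}(\Omega;\R^{n})}\,\Haus{n-1}\res\Sigma$ on countably $(n-1)$-rectifiable sets $\Sigma$ --- is essentially the proof of that lemma, and you are right to flag that mere absolute continuity $|\div F|\ll\Haus{n-1}$ would not suffice, since the Radon--Nikodym density could be unbounded. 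The only point to tidy up is the reference for the density estimate: it is \emph{not} actually stated in Section \ref{sec:div_meas} of this paper, which only records Gauss--Green formulas for sets of finite perimeter; you should either cite it directly (it follows from the representation of $\div F$ restricted to oriented countably rectifiable sets by the difference of the two normal traces, each bounded by $\|F\|_{L^{\infty}(\Omega;\R^{n})}$, see \cite{ACM,Silhavy2,crasta2019pairings}) or derive it by covering $J_{u}$, up to an $\Haus{n-1}$-null set, by countably many $C^{1}$ hypersurfaces and localizing the trace identity coming from \eqref{eq:GG_1}--\eqref{eq:GG_2}. With that reference supplied, the argument is complete; what it buys over the paper's version is a self-contained proof, at the cost of invoking a structural fact about $\DM^{\infty}$-fields that the paper deliberately outsources.
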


\begin{proof}
The first equivalence is given by \cite[Lemma 3.2]{crasta2019pairings}. Then, if we assume $u^\lambda \in L^1(\Omega; |\div F|)$ for some $\lambda : \Omega \to [0, 1]$, by the first part of the statement, we have $u^+, u^- \in L^1(\Omega; |\div F|)$, since they correspond to the choices $\overline{\lambda} \equiv 1$ and $\overline{\lambda} \equiv 0$, respectively. Hence, we see that $\mul \in L^{1}(\Omega; |\div F|)$, in the light of \eqref{eq:upmlambda} and the fact that $|\div F| \ll \Haus{n-1}$. Finally, the second estimate in \eqref{eq:TuLambda_estimate} immediately gives the opposite implication.
\end{proof}

One of the main applications of the theory of divergence-measure fields has consisted in generalizing integration by parts formulas for rough domains and weakly differentiable functions. For the ease of the reader we collect in the following statements some versions of these results, suitably refined for the purposes of this paper. The first one is simply \cite[Lemma 3.1]{comi2017locally}.

\begin{lemma} \label{lem:div_comp_supp_no_trace}
Let $F \in \DM^{\infty}(\Omega)$ be such that ${\rm supp}(F) \Subset \Omega$. Then we have $\div F(\Omega) = 0$.
\end{lemma}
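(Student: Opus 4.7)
The plan is to construct a cutoff function that equals $1$ on the support of $F$ and then apply the definition of distributional divergence. First I would show that $|\div F|$ is concentrated on $\mathrm{supp}(F)$: for any test function $\psi\in C_c^\infty(\Omega\setminus\mathrm{supp}(F))$ one has $\int_\Omega F\cdot\nabla\psi\,dx = 0$ because $F\equiv 0$ on $\mathrm{supp}(\psi)$, so the distribution $\div F$ vanishes on the open set $\Omega\setminus\mathrm{supp}(F)$. Since $\div F\in\Mm(\Omega)$ is a Radon measure, this yields $|\div F|(\Omega\setminus\mathrm{supp}(F))=0$.

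Next, since $\mathrm{supp}(F)$ is compact and contained in $\Omega$, by a standard Urysohn-type construction I can choose $\varphi\in C_c^\infty(\Omega)$ with $0\le\varphi\le 1$ and $\varphi\equiv 1$ on an open neighborhood $U$ of $\mathrm{supp}(F)$. Then $\nabla\varphi\equiv 0$ on $U\supset\mathrm{supp}(F)$, and by the definition of distributional divergence (testing against $\varphi$):
\begin{equation*}
\int_\Omega\varphi\,d\div F \;=\; -\int_\Omega F\cdot\nabla\varphi\,dx \;=\; 0,
\end{equation*}
since the integrand on the right vanishes pointwise a.e.\ (outside $\mathrm{supp}(F)$ because $F=0$, and on $U\supset\mathrm{supp}(F)$ because $\nabla\varphi=0$).

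Finally, because $|\div F|$ is concentrated on $\mathrm{supp}(F)$ where $\varphi\equiv 1$, we have
\begin{equation*}
\div F(\Omega) \;=\; \int_\Omega 1\,d\div F \;=\; \int_{\mathrm{supp}(F)}\varphi\,d\div F \;=\; \int_\Omega\varphi\,d\div F \;=\; 0,
\end{equation*}
which is the claim. I do not expect any real obstacle: the only subtle point is justifying that the distributional identity $\div F(\varphi)=-\int F\cdot\nabla\varphi\,dx$, which is stated for $\varphi\in C_c^\infty(\Omega)$ paired with the distribution, can be interpreted as the integral of $\varphi$ against the Radon measure $\div F$. This is standard (any compactly supported Radon measure acting on a continuous function agrees with its distributional pairing against a smooth test function), and together with the concentration property above it reduces the computation to the vanishing of $\int F\cdot\nabla\varphi\,dx$.
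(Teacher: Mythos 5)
Your proof is correct. The paper does not actually prove this lemma but simply quotes it as \cite[Lemma 3.1]{comi2017locally}, and your argument — showing that $\div F$ is concentrated on $\mathrm{supp}(F)$ and then testing against a cutoff $\varphi\in C_c^\infty(\Omega)$ equal to $1$ on a neighborhood of $\mathrm{supp}(F)$ — is precisely the standard proof of that cited result, with all the relevant subtleties (essential support, identification of the distributional pairing with integration against the Radon measure $\div F$) correctly handled.
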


Now we deal with the integration on sets with finite perimeter. Such Gauss-Green formulas have been widely studied in the literature \cite{ChenComiTorres, CTZ, comi2022representation, comi2017locally, crasta2017anzellotti, crasta2019pairings, Silhavy1, Silhavy2}. However, we provide here a general result, allowing for sets which can touch the boundary of the definition domain $\Omega$, under minimal regularity assumptions. To this purpose, we recall that an open set $\Omega$ is said to be {\em weakly regular} if it is a bounded set with finite perimeter in $\R^n$ such that $\Haus{n - 1}(\partial \Omega) = \Haus{n - 1}(\redb \Omega)$, or, equivalently, $\Haus{n - 1}(\partial \Omega \setminus \redb \Omega) = 0$.

\begin{theorem} \label{thm:GG_app}
Let $F \in \DM^{\infty}(\Omega)$ and let $E \subseteq \Omega$ be of finite perimeter in $\R^n$. Assume that either $E \Subset \Omega$ or $\Omega$ is weakly regular. Then, there exist the {\em interior and exterior normal traces} of $F$ on $\redb E$; that is, the functions 
$${\rm Tr}^i(F, \redb E), {\rm Tr}^e(F, \redb E) \in L^\infty(\redb E; \Haus{n-1}),$$ 
which satisfy
\begin{equation}
 \begin{cases} \div F(E^1 \cap \Omega)  = - \displaystyle \int_{\redb E} {\rm Tr}^i(F, \redb E) \, d \Haus{n-1} \\
\|{\rm Tr}^i(F, \redb E)\|_{L^\infty(\redb E; \Haus{n-1})}  \le \|F\|_{L^\infty(E; \R^n)},  \end{cases} \label{eq:GG_1} \\
\end{equation}
and
\begin{equation}
\begin{cases} \div F( (E^1 \cup \redb E) \cap \Omega ) = - \displaystyle \int_{\redb E} {\rm Tr}^e(F, \redb E) \, d \Haus{n-1} \\
\|{\rm Tr}^e(F, \redb E)\|_{L^\infty(\redb E; \Haus{n-1})}  \le \begin{cases} \|F\|_{L^\infty(\Omega \setminus E; \R^n)} & \text{ if } E \subset \subset \Omega \\
\|F\|_{L^\infty(\Omega; \R^n)} & \text{ otherwise} \end{cases}. \end{cases} \label{eq:GG_2}
\end{equation}
In particular, if $\Omega$ is weakly regular, then there exists ${\rm Tr}^i(F, \partial \Omega) \in L^\infty(\partial \Omega; \Haus{n-1})$ satisfying 
\begin{equation}
\div F(\Omega) = - \int_{\partial \Omega} {\rm Tr}^i(F, \partial \Omega) \, d \Haus{n-1} \text{ and } \|{\rm Tr}^i(F, \partial \Omega)\|_{L^\infty(\partial \Omega; \Haus{n-1})} \le \|F\|_{L^\infty(\Omega; \R^n)}. \label{eq:GG_boundary_domain}
\end{equation}
\end{theorem}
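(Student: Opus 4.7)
The plan is to derive all three identities by specializing the Leibniz rule \eqref{eq:Leibniz_lambda_gen} to $u = \chi_E$ with the two extremal choices $\lambda \equiv 0$ and $\lambda \equiv 1$, for which \eqref{eq:repr_chi_E} gives $\chi_E^0 = \chi_{E^1}$ and $\chi_E^1 = \chi_{E^1 \cup \partial^* E}$. The key structural input is the pointwise bound \eqref{eq:pairing_estimate_lambda}, which guarantees that the $\lambda$-pairing $(F, D\chi_E)_\lambda$ is absolutely continuous with respect to $|D\chi_E| = \Haus{n-1}\res \partial^* E$ with bounded Radon--Nikodym density. This density will then be taken as the definition of the normal traces.

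For the interior case $E \Subset \Omega$, Theorem \ref{thm:pairlambda_Leibniz} yields
\[
\div(\chi_E F) = \chi_{E^1}\div F + (F, D\chi_E)_0 = \chi_{E^1 \cup \partial^* E}\div F + (F, D\chi_E)_1 \quad\text{on } \Omega.
\]
The Chen--Frid product rule ensures $\chi_E F \in \DM^{\infty}(\Omega)$, and since $E \Subset \Omega$ this field has compact support in $\Omega$; hence Lemma \ref{lem:div_comp_supp_no_trace} gives $\div(\chi_E F)(\Omega) = 0$. Setting
\[
{\rm Tr}^i(F, \redb E) := \frac{d(F, D\chi_E)_0}{d\Haus{n-1}\res \partial^* E}, \qquad {\rm Tr}^e(F, \redb E) := \frac{d(F, D\chi_E)_1}{d\Haus{n-1}\res \partial^* E},
\]
and evaluating the two Leibniz identities on $\Omega$ produces \eqref{eq:GG_1} and \eqref{eq:GG_2} directly. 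The crude bound coming from \eqref{eq:pairing_estimate_lambda} is $\|F\|_{L^\infty(\Omega;\R^n)}$, which is not yet sharp; the improvement uses a localization trick. Replacing $F$ by $\chi_E F$ in the $\lambda=0$ identity and exploiting $\chi_E^2 = \chi_E$ together with the concentration of $(F, D\chi_E)_0$ on $\partial^* E$ (disjoint from $E^1$), one checks that $(\chi_E F, D\chi_E)_0 = (F, D\chi_E)_0$, so ${\rm Tr}^i(F, \redb E) = {\rm Tr}^i(\chi_E F, \redb E)$; applying the crude bound to $\chi_E F$ then gives $\|{\rm Tr}^i(F, \redb E)\|_\infty \le \|\chi_E F\|_{L^\infty(\Omega;\R^n)} = \|F\|_{L^\infty(E;\R^n)}$. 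An entirely analogous computation with $(1-\chi_E)F \in \DM^\infty(\Omega)$ and $\lambda \equiv 1$ produces $\|{\rm Tr}^e(F, \redb E)\|_\infty \le \|F\|_{L^\infty(\Omega\setminus E;\R^n)}$, settling the sharp bounds in the interior case.

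The main obstacle is the case where $\Omega$ is only weakly regular and $E$ may touch $\partial \Omega$, because $\chi_E F$ no longer has compact support in $\Omega$, so Lemma \ref{lem:div_comp_supp_no_trace} is unavailable. I would first establish \eqref{eq:GG_boundary_domain} as a standalone boundary trace theorem by performing an inner approximation $\Omega_k \Subset \Omega$ with $\Haus{n-1}(\partial \Omega_k) \to \Haus{n-1}(\partial \Omega) = \Per(\Omega)$ (for instance via mollification of the signed distance to $\partial \Omega$ combined with the coarea formula), applying the already-proven interior Gauss--Green formula to each $\Omega_k$, and passing to the limit using the uniform bound on the interior traces to extract a weak--$\ast$ limit in $L^\infty(\partial \Omega; \Haus{n-1})$ which serves as ${\rm Tr}^i(F, \partial \Omega)$. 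Once \eqref{eq:GG_boundary_domain} is available, the general formulas \eqref{eq:GG_1} and \eqref{eq:GG_2} for arbitrary $E \subseteq \Omega$ follow by the same Leibniz computation, with $\div(\chi_E F)(\Omega)$ now expressed via \eqref{eq:GG_boundary_domain} applied to $\chi_E F$ (whose interior trace on $\partial \Omega$ cancels against contributions from the part of $\partial^* E$ lying on $\partial \Omega$); the sharp interior bound is preserved by the same $\chi_E F$-substitution, while the exterior bound necessarily degrades to $\|F\|_{L^\infty(\Omega;\R^n)}$, because when $\partial^* E$ meets $\partial \Omega$ the set $\Omega\setminus E$ no longer captures a genuine "outer" side of $E$.
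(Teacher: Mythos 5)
Your treatment of the interior case $E\Subset\Omega$ is correct and self-contained, whereas the paper simply quotes \cite[Theorem 3.2 and Corollary 3.6]{comi2017locally}: defining the traces as the Radon--Nikodym densities of $(F,D\chi_E)_0$ and $(F,D\chi_E)_1$ with respect to $\Haus{n-1}\res\redb E$, using Lemma \ref{lem:div_comp_supp_no_trace} to kill $\div(\chi_E F)(\Omega)$, and sharpening the $L^\infty$ bounds via the substitutions $F\mapsto\chi_E F$ and $F\mapsto(1-\chi_E)F$ is sound; the identities $(\chi_E F,D\chi_E)_0=(F,D\chi_E)_0$ and $((1-\chi_E)F,D\chi_E)_1=(F,D\chi_E)_1$ do check out, since both pairings are concentrated on $\redb E$, which is disjoint from $E^1$. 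For the weakly regular case your route genuinely diverges from the paper's: the paper extends $F$ by zero to $\hat F\in\DM^\infty(\R^n)$ using \cite[Corollary 5.5 and Remark 5.6]{comi2017locally}, so that $\div\hat F=\div F\res\Omega+{\rm Tr}^i(\hat F,\partial\Omega)\,\Haus{n-1}\res\partial\Omega$, and then applies the interior formulas to $\hat F$ and $E$ inside $\R^n$; you instead stay inside $\Omega$, prove \eqref{eq:GG_boundary_domain} first by strict interior approximation, and then apply it to $\chi_E F$. Both are legitimate, and the extension theorem the paper cites is essentially equivalent to the boundary-trace statement you propose to build by hand. Note, however, that the existence of smooth $\Omega_k\Subset\Omega$ with $\Per(\Omega_k)\to\Per(\Omega)$ for a merely weakly regular $\Omega$ is itself Schmidt's theorem \cite[Theorem 1.1]{MR3314116}, not a free consequence of mollifying the distance function.

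The one step you have not actually supplied is the final identification in the general case. From the Leibniz rule and \eqref{eq:GG_boundary_domain} applied to $\chi_E F$ you obtain
\begin{equation*}
\div F(E^1\cap\Omega)=-\int_{\redb E\cap\Omega}\theta_0\,d\Haus{n-1}-\int_{\partial\Omega}{\rm Tr}^i(\chi_E F,\partial\Omega)\,d\Haus{n-1},
\end{equation*}
with $\theta_0$ the density of $(F,D\chi_E)_0$. To turn this into \eqref{eq:GG_1} you must show that ${\rm Tr}^i(\chi_E F,\partial\Omega)$ vanishes $\Haus{n-1}$-a.e.\ on $\partial\Omega\setminus\redb E$ and that on $\redb E\cap\partial\Omega$ it coincides, with the correct sign and with the bound $\|F\|_{L^\infty(E;\R^n)}$, with the function you want to call ${\rm Tr}^i(F,\redb E)$ there. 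The phrase ``cancels against contributions from the part of $\partial^* E$ lying on $\partial\Omega$'' asserts this but does not prove it: what is needed is a locality property of normal traces on portions of two reduced boundaries where the measure-theoretic normals agree (here $\nu_E=\nu_\Omega$ $\Haus{n-1}$-a.e.\ on $\redb E\cap\partial\Omega$, since $E\subset\Omega$). This is precisely where the paper invokes \cite[Proposition 4.10]{comi2017locally}. Without that ingredient the boundary term remains an unidentified contribution spread over all of $\partial\Omega$, and the same issue affects your derivation of \eqref{eq:GG_2}. The plan is viable, but this locality lemma must be stated and used explicitly.
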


\begin{proof}
For the proof of \eqref{eq:GG_1} and \eqref{eq:GG_2} in the case $E \Subset \Omega$ we refer to \cite[Theorem 3.2 and Corollary 3.6]{comi2017locally}. Then, if $\Omega$ is weakly regular, by \cite[Corollary 5.5 and Remark 5.6]{comi2017locally} we have that the zero extension of $F$ to $\R^{n}$, defined as
\begin{equation*} \hat{F}(x) := \begin{cases} F(x) & \text{if} \ x \in \Omega \\
0 & \text{if} \ x \in \R^{n} \setminus \Omega \end{cases}, \end{equation*}
satisfies $\hat{F} \in \DM^{\infty}(\R^{n})$, and there exist ${\rm Tr}^i(\hat{F}, \partial \Omega), {\rm Tr}^e(\hat{F}, \partial \Omega) \in L^\infty(\partial \Omega; \Haus{n-1})$ such that
\begin{equation} \label{eq:div_hat_F}
\div \hat{F} = \div F \res \Omega + {\rm Tr}^i(\hat{F}, \partial \Omega) \, \Haus{n-1} \res \partial \Omega \ \ \text{ and } \ \ {\rm Tr}^e(\hat{F}, \partial \Omega) = 0.
\end{equation}
Hence, if $E \subset \Omega$ is a set of finite perimeter in $\R^n$, we can apply \eqref{eq:GG_1} and \eqref{eq:GG_2} to $\hat{F}$ and obtain
\begin{align}
\div \hat{F}(E^1) & = - \int_{\redb E} {\rm Tr}^i(\hat{F}, \redb E) \, d \Haus{n-1},  \nonumber \\
\div \hat{F}(E^1 \cup \redb E) & = - \int_{\redb E} {\rm Tr}^e(\hat{F}, \redb E) \, d \Haus{n-1}. \label{eq:GG_2_hat_F}
\end{align}
Then, we notice that it must be $\Haus{n-1}(E^1 \setminus \Omega) = 0$, since $\Haus{n-1}(\Omega^1 \setminus \Omega) = 0$ (see \cite[Corollary 5.5]{comi2017locally}) and $E^1 \subset \Omega^1$. Therefore, we have 
$$\div \hat{F}(E^1) = \div \hat{F}(E^1 \cap \Omega) + \div\hat{F} (E^1 \setminus \Omega) = \div F (E^1 \cap \Omega),$$ 
since $|\div \hat{F}| \ll \Haus{n-1}$. Hence, given that 
$$\|{\rm Tr}^i(F, \redb E)\|_{L^\infty(\redb E; \Haus{n-1})} \le \|\hat{F}\|_{L^\infty(E; \R^n)} = \|F\|_{L^\infty(E; \R^n)},$$ we can set ${\rm Tr}^i(F, \redb E) := {\rm Tr}^i(\hat{F}, \redb E)$ to include the case $\Haus{n-1}(\partial^* E \cap \partial \Omega) > 0$, with a little abuse of notation, thus obtaining \eqref{eq:GG_1}. As for \eqref{eq:GG_2}, we exploit \eqref{eq:div_hat_F} and the fact that $\Haus{n-1}(E^1 \setminus \Omega) = 0$ to get
\begin{equation*}
\div \hat{F}(E^1 \cup \redb E) = \div F((E^1 \cup \redb E) \cap \Omega) + \int_{\redb E \cap \partial \Omega} {\rm Tr}^i(\hat{F}, \partial \Omega) \, d \Haus{n-1}.
\end{equation*}
In addition, we notice that $\nu_E(x) = \nu_\Omega(x)$ for $\Haus{n-1}$-a.e. $x \in \redb E \cap \partial \Omega$, since $E \subset \Omega$, so that, by \cite[Proposition 4.10]{comi2017locally}, we have 
\begin{equation*}
{\rm Tr}^i(\hat{F}, \redb E) = {\rm Tr}^i(\hat{F}, \partial \Omega) \ \text{ and } \ {\rm Tr}^e(\hat{F}, \redb E) = {\rm Tr}^e(\hat{F}, \partial \Omega) = 0 \ \ \Haus{n-1}\text{-a.e. on } \redb E \cap \partial \Omega,
\end{equation*} 
due to \eqref{eq:div_hat_F}.
Hence, exploiting \eqref{eq:GG_2_hat_F} we get
\begin{align*}
\div F((E^1 \cup \redb E) \cap \Omega)& = - \int_{\redb E} {\rm Tr}^e(\hat{F}, \redb E) \, d \Haus{n-1} - \int_{\redb E \cap \partial \Omega} {\rm Tr}^i(\hat{F}, \partial \Omega) \, d \Haus{n-1} \\
& = - \int_{\redb E \setminus \partial \Omega} {\rm Tr}^e(\hat{F}, \redb E) \, d \Haus{n-1} - \int_{\redb E \cap \partial \Omega} {\rm Tr}^i(\hat{F}, \redb E) \, d \Haus{n-1}.
\end{align*}
Hence, we obtain \eqref{eq:GG_2} by setting 
\begin{equation*}
{\rm Tr}^e(F, \redb E) := \begin{cases} {\rm Tr}^e(\hat{F}, \redb E) & \text{ on } \redb E \setminus \partial \Omega \\
{\rm Tr}^i(\hat{F}, \redb E) & \text{ on } \redb E \cap \partial \Omega \end{cases},
\end{equation*}
which clearly satisfies
\begin{align*}
\|{\rm Tr}^e(F, \redb E)\|_{L^\infty(\redb E; \Haus{n-1})} & \le \max \{ \|\hat{F}\|_{L^{\infty}(\R^n \setminus E; \R^n)}, \|\hat{F}\|_{L^{\infty}(E; \R^n)} \} = \|\hat{F}\|_{L^\infty(\R^n; \R^n)} \\
& = \|F\|_{L^\infty(\Omega; \R^n)} 
\end{align*} 
Finally, we obtain \eqref{eq:GG_boundary_domain} if we choose $E = \Omega$ in \eqref{eq:GG_1}.
\end{proof}

\begin{remark}
We point out that in Theorem \ref{thm:GG_app} we can equivalently assume that $E$ has finite perimeter only in $\Omega$. Indeed, if $E \Subset \Omega$, then $\Per(E) = \Per(E, \Omega)$. If instead $\Omega$ is weakly regular, then there exists a family of bounded open sets with smooth boundary $(\Omega_k)_{k \in \N}$ such that 
\begin{equation*}
\Omega_k \subset \Omega_{k+1}, \  \bigcup_{k = 0}^{+\infty} \Omega_k = \Omega \ \text{ and } \Per(\Omega_k) \to \Per(\Omega)
\end{equation*}
(see \cite[Theorem 1.1]{MR3314116} and the subsequent discussion).
Hence, given $E \subset \Omega$ with $\Per(E, \Omega) < + \infty$, it is clear that 
\begin{align*}
\Per(E \cap \Omega_k) = \Per(E \cap \Omega_k, \Omega) \le \Per(E, \Omega) + \Per(\Omega_k, \Omega) = \Per(E, \Omega) + \Per(\Omega_k)
\end{align*} 
Thus, by the lower semicontinuity of the perimeter we get
\begin{equation*}
\Per(E) \le \liminf_{k \to + \infty} \Per(E \cap \Omega_k) \le \Per(E, \Omega) + \lim_{k \to + \infty} \Per(\Omega_k) = \Per(E, \Omega) + \Per(\Omega).
\end{equation*}
Since clearly $\Per(E) \ge \Per(E, \Omega)$, we conclude that $E \subset \Omega$ has finite perimeter in $\Omega$ if and only if it has finite perimeter in $\R^n$.
\end{remark}

Exploiting the Leibniz rule given by Theorem \ref{thm:pairlambda_Leibniz}, we are able to provide an integration by parts formula up to the boundary of a Lipschitz domain for the divergence of a product between a divergence-measure field and a suitable scalar function of bounded variation. Although similar results are already present in the literature (see for instance \cite{Anzellotti_1983, ChenComiTorres, CF1, comi2022representation, comi2017locally, crasta2017extension, Frid2}), to the best of our knowledge it has never been formulated with this level of generality before.

\begin{theorem} \label{thm:GG_boundary_domain_u}
Let $\Omega$ be an open bounded set with Lipschitz boundary.
Let $F \in \DM^{\infty}(\Omega)$ and $u \in BV(\Omega)$ be such that $u^\lambda \in L^1(\Omega; |\div F|)$ for some Borel function $\lambda : \Omega \to [0, 1]$. Then we have
\begin{equation} \label{eq:GG_boundary_domain_u}
\int_{\Omega} u^\lambda \, d \div F + (F, Du)_\lambda(\Omega) = \div(uF)(\Omega) = - \int_{\partial \Omega} {\rm Tr}_{\partial \Omega}(u) {\rm Tr}^i(F, \partial \Omega) \, d \Haus{n-1}.
\end{equation}
\end{theorem}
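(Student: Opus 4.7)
The first equality is immediate from Theorem \ref{thm:pairlambda_Leibniz}: the hypothesis $u^\lambda \in L^1(\Omega; |\div F|)$ ensures that the Leibniz identity $\div(uF) = u^\lambda \div F + (F, Du)_\lambda$ holds as a relation between finite Radon measures on $\Omega$, so evaluating both sides on $\Omega$ gives the desired identity.

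For the second equality, I would proceed by truncation. Set $u_N := T_N(u) \in BV(\Omega) \cap L^\infty(\Omega)$; by the Chen--Frid Leibniz rule $u_N F \in \DM^\infty(\Omega)$, and since $\Omega$ is Lipschitz (hence weakly regular), Theorem \ref{thm:GG_app} applied with $E = \Omega$ gives
\begin{equation*}
\div(u_N F)(\Omega) = -\int_{\partial\Omega} {\rm Tr}^i(u_N F, \partial \Omega) \, d\Haus{n-1}.
\end{equation*}
Combining the classical product rule ${\rm Tr}^i(u_N F, \partial \Omega) = {\rm Tr}_{\partial\Omega}(u_N) \, {\rm Tr}^i(F, \partial \Omega)$ for interior normal traces of a scalar times a vector field (see e.g.\ \cite{Anzellotti_1983, CF1, comi2017locally}) with the equality ${\rm Tr}_{\partial\Omega}(T_N(u)) = T_N({\rm Tr}_{\partial\Omega}(u))$, this rewrites as
\begin{equation*}
\div(u_N F)(\Omega) = -\int_{\partial\Omega} T_N({\rm Tr}_{\partial\Omega}(u)) \, {\rm Tr}^i(F, \partial \Omega) \, d\Haus{n-1}.
\end{equation*}
The right hand side converges as $N \to +\infty$ to $-\int_{\partial\Omega} {\rm Tr}_{\partial\Omega}(u) \, {\rm Tr}^i(F, \partial \Omega) \, d\Haus{n-1}$ by dominated convergence, with integrable dominant $|{\rm Tr}_{\partial\Omega}(u)| \cdot \|{\rm Tr}^i(F, \partial \Omega)\|_{L^\infty(\partial\Omega; \Haus{n-1})}$.

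It remains to establish that $\div(u_N F)(\Omega) \to \div(uF)(\Omega)$. Set $v_N := u - u_N \in BV(\Omega)$. Since the auxiliary map $t \mapsto t - T_N(t)$ is non-decreasing, the ordering of approximate traces at jump points is preserved, so one verifies $\Haus{n-1}$-a.e.\ that $v_N^\pm = u^\pm - T_N(u^\pm)$, whence $v_N^\lambda = u^\lambda - u_N^\lambda \in L^1(\Omega; |\div F|)$. Applying Theorem \ref{thm:pairlambda_Leibniz} to $v_N$ and using the linearity of the distributional divergence, evaluation on $\Omega$ yields
\begin{equation*}
\div(uF)(\Omega) - \div(u_N F)(\Omega) = \int_\Omega (u^\lambda - u_N^\lambda) \, d\div F + (F, Dv_N)_\lambda(\Omega).
\end{equation*}
The first summand tends to zero by dominated convergence, exploiting \eqref{eq:precise_repr_truncated_lambda2} together with the fact that $\mul \in L^1(\Omega; |\div F|)$ via Propositions \ref{prop:mul}--\ref{prop:summability_lambda}. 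The second is bounded by $\|F\|_{L^\infty} |Dv_N|(\Omega)$ through \eqref{eq:pairing_estimate_lambda}, and $|Dv_N|(\Omega) \to 0$ as $N \to +\infty$ follows from the BV chain rule for the Lipschitz function $T_N$: the three pieces $\int_{\{|u|>N\}} |\nabla u| \, dx$, $|D^c u|(\{|\tilde u|>N\})$, and $\int_{J_u} |(u^+ - T_N(u^+)) - (u^- - T_N(u^-))| \, d\Haus{n-1}$ all vanish by dominated convergence. The main technical subtlety lies in this last BV-convergence step, which forces a careful separate analysis of the three parts of the Lebesgue decomposition of $Du$; the linearity identity $v_N^\lambda = u^\lambda - u_N^\lambda$ is another delicate point, which would fail for generic sums but is rescued here by the monotonicity of $t \mapsto t - T_N(t)$.
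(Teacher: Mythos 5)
Your proof is correct, and while it shares the paper's overall skeleton (truncate, apply the Gauss--Green formula \eqref{eq:GG_boundary_domain} to the bounded field $T_N(u)F$ together with the trace product rule, then pass to the limit), it handles the delicate limiting step in a genuinely different and arguably cleaner way. The paper proves separately that $\int_\Omega T_N(u)^\lambda\,d\div F \to \int_\Omega u^\lambda\,d\div F$ and that $(F,DT_N(u))_\lambda(\Omega)\to (F,Du)_\lambda(\Omega)$; the latter requires invoking the weak-$*$ convergence of the pairings from \cite[Remark 4.5]{crasta2019pairings} and then a tightness argument on the exhaustion $\Omega_{\delta_k}$ to rule out mass escaping to $\partial\Omega$, since weak-$*$ convergence alone does not give convergence of total masses on the open set $\Omega$. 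You bypass this entirely by linearity: writing $v_N=u-T_N(u)$, applying the Leibniz rule of Theorem \ref{thm:pairlambda_Leibniz} to $v_N$, and estimating $|(F,Dv_N)_\lambda(\Omega)|\le \|F\|_{L^\infty(\Omega;\R^n)}|Dv_N|(\Omega)\to 0$ via \eqref{eq:pairing_estimate_lambda}. The two points you flag as delicate are indeed the ones that need care, and both are sound: the identity $v_N^\pm=u^\pm-T_N(u^\pm)$ follows from \cite[Proposition 3.69(c)]{AFP} applied to the non-decreasing $1$-Lipschitz function $t\mapsto t-T_N(t)$ (which is why $v_N^\lambda=u^\lambda-T_N(u)^\lambda$, something that would fail for a generic difference of $BV$ functions), and $|Dv_N|(\Omega)\to 0$ is the norm convergence $T_N(u)\to u$ in $BV(\Omega)$, which the paper itself uses (without proof) to get strict convergence of the boundary traces. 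Your approach trades the measure-theoretic tightness argument for the algebraic identity on $v_N^\lambda$; both are legitimate, but yours avoids extracting subsequences and choosing good slices $\Omega_{\delta_k}$.
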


\begin{proof}
We notice that $\Omega$ is a weakly regular set. Hence, arguing as in the proof of Theorem \ref{thm:GG_app}, we consider the zero extension $\hat{F}$ of $F$ to $\R^{n}$. In addition, by \cite[Theorem 3.87]{AFP} we know that ${\rm Tr}_{\partial \Omega}(u) \in L^1(\partial \Omega; \Haus{n-1})$ and that the zero extension $\hat{u}$ of $u$ to $\R^n$ satisfies $\hat{u} \in BV(\R^n)$ and $D\hat{u} = Du$ on $\Omega$. For $N > 0$, we consider the truncation $T_N(\hat{u})$, which clearly satisfies $T_N(\hat{u}) = \widehat{T_N(u)}$. Thanks to \cite[Proposition 3.1]{crasta2017anzellotti}, we know that 
\begin{equation*}
{\rm Tr}^i(T_N(\hat{u}) \hat{F}, \partial \Omega) = {\rm Tr}_{\partial \Omega}(T_N(u)) {\rm Tr}^i(\hat{F}, \partial \Omega) \ \Haus{n-1}\text{-a.e. on } \partial \Omega.
\end{equation*}
By applying \eqref{eq:GG_1} to the set $E = \Omega$ in the domain $\R^n$, we get
\begin{equation*}
\div (\widehat{T_N(u)} \hat{F})(\Omega^1) =  - \int_{\partial \Omega} {\rm Tr}_{\partial \Omega}(T_N(u)) {\rm Tr}^i(\hat{F}, \partial \Omega) \, d \Haus{n-1}.
\end{equation*}
However, $\Omega$ is weakly regular, which implies that $\Haus{n-1}(\Omega^1 \setminus \Omega) = 0$ (see \cite[Corollary 5.5]{comi2017locally}), and so 
$$\div (\widehat{T_N(u)} \hat{F})(\Omega^1) = \div (\widehat{T_N(u)} \hat{F})(\Omega) = \div (T_N(u) F)(\Omega),$$
since $|\div (\widehat{T_N(u)} \hat{F})| \ll \Haus{n-1}$ and $\widehat{T_N(u)} \hat{F} = T_N(u) F$ on $\Omega$. Hence, by \eqref{eq:Leibniz_lambda_gen}, we get
\begin{equation} \label{eq:GG_TNu}
\int_{\Omega} T_N(u)^\lambda \, d \div F + (F, DT_N(u))_\lambda(\Omega) = \div(T_N(u)F)(\Omega) = - \int_{\partial \Omega} {\rm Tr}_{\partial \Omega}(T_N(u)) {\rm Tr}^i(F, \partial \Omega) \, d \Haus{n-1},
\end{equation}
where we set ${\rm Tr}^i(F, \partial \Omega) := {\rm Tr}^i(\hat{F}, \partial \Omega)$, as in the proof of Theorem \ref{thm:GG_app}. Since $T_N(u) \to u$ in $BV(\Omega)$, and therefore in $BV(\Omega)$-strict as $N \to + \infty$, \cite[Theorem 3.88]{AFP} implies that ${\rm Tr}_{\partial \Omega}(T_N(u)) \to {\rm Tr}_{\partial \Omega}(u)$ in $L^1(\partial \Omega; \Haus{n-1})$ as $N \to + \infty$. As for the left hand side, combining Proposition \ref{prop:mul} and Proposition \ref{prop:summability_lambda}, we exploit Lebesgue's dominated convergence theorem to obtain $T_N(u)^\lambda \to u^\lambda$ in $L^1(\Omega; |\div F|)$ as $N \to + \infty$. Then, we know that $(F, DT_N(u))_\lambda \weakto (F, Du)_\lambda$ in $\Mm(\Omega)$ by \cite[Remark 4.5]{crasta2019pairings}. Let $\Omega_\delta = \{ x \in \Omega : {\rm dist}(x, \partial \Omega) > \delta\}$ for $\delta > 0$ such that $\Omega_\delta \neq \emptyset$. By \eqref{eq:pairing_estimate_lambda}, it is clear that the sequence of measures $\left ( |(F, DT_N(u))_\lambda| \right)_N$ is uniformly bounded, so that there exists some measure $\gamma \ge |(F, Du)_\lambda|$ such that $|(F, DT_N(u))_\lambda| \weakto \gamma$ in $\Mm(\Omega)$, up to a subsequence. Due to the non-concentration property of Radon measures, we know that $\gamma(\partial \Omega_\delta) = 0$ for $\Leb{1}$-a.e. $\delta >0$. We fix a sequence $(\delta_k)_{k \in \N}, \delta_k \to 0^+$, of such good values of $\delta > 0$. Thus, by \cite[Proposition 1.62]{AFP}, possibly up to a subsequence we get
\begin{equation} \label{eq:conv_pair_Omega_k}
(F, DT_N(u))_\lambda(\Omega_{\delta_k}) \to (F, Du)_\lambda(\Omega_{\delta_k}) \ \text{ as } N \to + \infty.
\end{equation}
On the other hand, $|D T_N(u)| \le |Du|$ and \eqref{eq:pairing_estimate_lambda} yield
\begin{equation*}
|(F, DT_N(u))_\lambda|(\Omega \setminus \Omega_{\delta_k}) \le \|F\|_{L^\infty(\Omega; \R^n)} |Du|(\Omega \setminus \Omega_{\delta_k}) \to 0 \ \text{ as } k \to + \infty.
\end{equation*}
Hence, for all $\eps > 0$, there exists $k_0 \in \N$ such that 
\begin{equation*}
\max\{ |(F, D u)_\lambda|(\Omega \setminus \Omega_{\delta_k}), |(F, DT_N(u))_\lambda|(\Omega \setminus \Omega_{\delta_k})\} \le \|F\|_{L^\infty(\Omega; \R^n)} |Du|(\Omega \setminus \Omega_{\delta_k}) < \eps
\end{equation*}
for all $k \ge k_0$ and $N > 0$. 
Thus, thanks to \eqref{eq:conv_pair_Omega_k} we obtain
\begin{align*}
\limsup_{N \to + \infty} \left | (F, DT_N(u))_\lambda(\Omega) - (F, D u)_\lambda(\Omega) \right | & \le \lim_{N \to + \infty} \left | (F, DT_N(u))_\lambda(\Omega_{\delta_k}) - (F, Du)_\lambda(\Omega_{\delta_k}) \right | \\
& + \limsup_{N \to + \infty} |(F, DT_N(u))_\lambda|(\Omega \setminus \Omega_{\delta_k}) + |(F, D u)_\lambda|(\Omega \setminus \Omega_{\delta_k}) \\
& \le 2 \eps,
\end{align*}
which, since $\eps$ is arbitrary, implies 
\begin{equation*}
\lim_{N \to + \infty} (F, DT_N(u))_\lambda(\Omega) = (F, D u)_\lambda(\Omega).
\end{equation*}
All in all, we pass to the limit as $N \to + \infty$ in \eqref{eq:GG_TNu}, finally obtaining \eqref{eq:GG_boundary_domain_u}.
\end{proof}

\subsection{Measures in the dual of $BV$} We recall the definition of the dual of the space $BV$.

\begin{definition}
We denote by $BV(\Omega)^*$ the dual of the space $BV(\Omega)$; that is, the space of linear functionals $\T: BV(\Omega) \to \R$ for which there exists a constant $C > 0$ such that 
\begin{equation*}
|\T(u)| \le C \|u\|_{BV(\Omega)} \text{ for all } u \in BV(\Omega).
\end{equation*}
\end{definition}

It is well known that there are some elements in the dual of $BV$ whose action can be represented as the integration of a suitable representative of the $BV$ function against a Radon measure (see for instance \cite{Phuc_Torres}). Indeed, given $\mu \in \MH(\Omega)$, the linear functional 
\begin{equation*} 
\T_{\mu}(u) = \int_\Omega u^* \, d \mu  \quad \text{ for } u \in BV(\Omega) \cap L^{\infty}(\Omega)
\end{equation*}
is well defined, although not necessarily continuous. We point out that the assumption $\mu \in \MH(\Omega)$ is necessary, since otherwise $u^*(x)$ might not be well defined for $|\mu|$-a.e. $x \in \Omega$, and that the choice of the precise representative $u^*$ ensures the linearity of $\T_{\mu}$, due to \eqref{eq:u_star_mean_lim}. Therefore, we choose the following notation:


\smallskip

\begin{minipage}{1\linewidth}
\begin{center} \textit{given $\mu \in \MH(\Omega)$, we say that $\mu \in BV(\Omega)^*$ if there exists $\T \in BV(\Omega)^*$ such that} \end{center}
$$\T(u) = \T_{\mu}(u) \quad \textit{ for all } \ u\in BV(\Omega) \cap L^\infty(\Omega).$$
\end{minipage}

\smallskip

Under the additional assumption that $\Omega$ is an open bounded set with Lipschitz boundary, it was proved in \cite[Theorem 8.2]{Phuc_Torres} that $\T_{\mu}$ is continuous on $BV(\Omega)\cap L^{\infty}(\Omega)$ with respect to the $BV$-norm if and only if there exists $C > 0$ such that 
\begin{equation*}
|\mu(U \cap \Omega )| \le C\, \Per(U) \quad \text{ for all open sets } U \subset \R^n \text{ with smooth boundary.}
\end{equation*}
We further explore this and similar results in Section \ref{sec:admissible_measures}.

Given a pair $(\mu, \lambda) \in \MH(\Omega) \times \B_{b}(\Omega)$, it is also possible to define 
\begin{equation} \label{def:T_mu_lambda}
\T_{\mu, \lambda}(u) = \int_\Omega u^\lambda \, d \mu  \quad \text{ for } u \in BV(\Omega) \cap L^{\infty}(\Omega).
\end{equation}
However, unless $\lambda(x) \equiv \frac{1}{2}$ for $|\mu|$-a.e. $x \in \Omega$ or $|\mu|(\Sigma) = 0$ for all $\Haus{n-1}$-rectifiable sets $\Sigma$, this mapping is not linear. Indeed, arguing analogously as in \cite[Remark 4.6]{crasta2019pairings}, we notice that for any $u \in BV(\Omega) \cap L^\infty(\Omega)$ we have
\begin{equation*}
\T_{\mu, \lambda}(u) + \T_{\mu, \lambda}(- u) = \int_\Omega \left ( u^\lambda + (-u)^\lambda \right ) \, d \mu = \int_\Omega (2 \lambda - 1) (u^+ - u^-) \, d \mu. 
\end{equation*}
Nevertheless, it is worth remarking that for a relevant class of measures the functional $\T_{\mu, \lambda}$ can be extended to a continuous functional defined on the whole space $BV(\Omega)$, see Definition \ref{def:muadmissible} and Lemma \ref{lem:lambda_int_approx}.
\medskip

\section{$\lambda$-approximation}

It is natural to expect that many properties of the $\lambda$-pairing $(F,Du)_{\lambda}$ can be obtained as direct extensions of properties satisfied by the pairing when $u$ is a smooth function. The goal of this section is to provide the required approximation result, Theorem \ref{lemma:pairlambda-vs-area}, which in turn is based on Theorem \ref{thm:smooth_lambda_approx}, a finer version of Anzellotti-Giaquinta's approximation that is specifically designed to enforce also the $\Haus{n-1}$-a.e. approximation of the $\lambda$-representative $u^{\lambda}$.

We start by stating a first refinement of Anzellotti-Giaquinta approximation theorem for $BV$ functions involving also the $\Haus{n-1}$-a.e. pointwise convergence and the strict convergence with respect to the area functional. As customary, we say that any radial function $\rho \in C^{\infty}_{c}(B_{1})$ such that $\rho \ge 0$ and $\displaystyle \int_{B_{1}} \rho \, dx = 1$ is a {\em standard mollifier}. For all $\eps > 0$ we set $\rho_\eps(x) = \eps^{-n} \rho\left (\frac{x}{\eps} \right )$, and we recall that, for all $u \in BV(\Omega)$, by \cite[Corollary 3.80]{AFP} we have 
\begin{equation} \label{eq:u_star_moll_point_lim}
\lim_{\eps \to 0^+} (\rho_\eps \ast u)(x) = u^*(x) \ \text{ for } \Haus{n-1}\text{-a.e. } x \in \Omega.
\end{equation}

\begin{theorem} \label{thm:Anzellotti_Giaquinta} 
Let $u \in BV(\Omega)$. Then there exists $(u_{\eps})_{\eps > 0} \subset BV(\Omega) \cap C^{\infty}(\Omega)$ such that 
\begin{enumerate}
	\item $u_{\eps} \to u$ in $L^{1}(\Omega)$ as $\eps \to 0$,
	\item $|Du_{\eps}|(\Omega) \le |Du|(\Omega) + 4 \eps$ for all $\eps > 0$, and so $\displaystyle \lim_{\eps \to 0} |Du_{\eps}|(\Omega) = |Du|(\Omega)$,
	\item $\sqrt{1 + |Du_{\eps}|^2}(\Omega) \le \sqrt{1 + |Du|^2}(\Omega) + 4 \eps$ for all $\eps > 0$, and so $\displaystyle \lim_{\eps \to 0} \sqrt{1 + |Du_{\eps}|^2}(\Omega) = \sqrt{1 + |Du|^2}(\Omega)$,
	\item $u_{\eps}(x) \to u^{*}(x)$ for $\Haus{n - 1}$-a.e. $x \in \Omega$ as $\eps \to 0$,
	\item $\displaystyle \|u_{\eps}\|_{L^{\infty}(\Omega)} \le (1 + \eps) \|u\|_{L^{\infty}(\Omega)}$ for all $\eps > 0$,
	\item if $\Omega$ is an open set with bounded Lipschitz boundary, then $\Tr_{\partial \Omega}(u_\eps) = \Tr_{\partial \Omega}(u)$ for all $\eps >0$.
\end{enumerate}
\end{theorem}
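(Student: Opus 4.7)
Carry out a refinement of the classical Anzellotti--Giaquinta smoothing procedure, based on a locally finite partition of unity, in which the mollification parameters of all pieces are forced to tend to $0$ as $\eps\to 0^+$. The standard bookkeeping gives (1), (2), (3) and (5); the localized pointwise mollification limit \eqref{eq:u_star_moll_point_lim} applied termwise gives the new ingredient (4); a standard adjustment in a collar near $\partial\Omega$ accounts for (6).

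\textbf{Setup and estimates (1), (2), (3), (5).} Fix a standard mollifier $\rho$ and an exhaustion $\Omega_0=\emptyset\Subset \Omega_j\Subset\Omega_{j+1}\Subset \Omega$ with $\bigcup_j\Omega_j=\Omega$. Let $A_j:=\Omega_{j+1}\setminus\overline{\Omega_{j-1}}$ be the associated locally finite cover with overlap at most $2$, and let $\{\zeta_j\}\subset C_c^\infty(\Omega)$ be a subordinate partition of unity ($\sum_j\zeta_j\equiv 1$ on $\Omega$). For each $\eps>0$ pick $\eps_j=\eps_j(\eps)\in(0,\eps)$ so small that $\mathrm{supp}(\rho_{\eps_j}*(\zeta_j u))\subset A_j$ and
\[
\|\rho_{\eps_j}*(\zeta_j u)-\zeta_j u\|_{L^1(\Omega)}+\|\rho_{\eps_j}*(u\nabla\zeta_j)-u\nabla\zeta_j\|_{L^1(\Omega;\R^n)}\le 2^{-j}\eps,
\]
and so that $\eps_j(\eps)\to 0^+$ as $\eps\to 0^+$ for every $j$. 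Set $u_\eps:=\sum_{j\ge 1}\rho_{\eps_j}*(\zeta_j u)\in C^\infty(\Omega)$. Property (5) is immediate from $\|\rho_{\eps_j}*v\|_{L^\infty}\le \|v\|_{L^\infty}$ together with $\sum_j\zeta_j\equiv 1$, and (1) follows by summing the $L^1$-errors. For (2) and (3) one uses the Leibniz splitting $D(\rho_{\eps_j}*(\zeta_j u))=\rho_{\eps_j}*(\zeta_j\,Du)+\rho_{\eps_j}*(u\,\nabla\zeta_j\,\Leb{n})$, the crucial cancellation $\sum_j\nabla\zeta_j\equiv 0$ (which absorbs the first-order commutator up to a summable remainder $\sum_j 2^{-j}\eps\le \eps$), the overlap-$2$ structure of $\{A_j\}$, and the sublinearity of $|\cdot|$ (resp.\ convexity and monotonicity of $\sqrt{1+|\cdot|^2}$); the factor $4$ is accounted for by the two-fold overlap together with the $L^1$ perturbation of both $\zeta_j u$ and $u\nabla\zeta_j$.

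\textbf{Pointwise convergence (4).} This is the novel point. Apply \eqref{eq:u_star_moll_point_lim} to each $\zeta_j u\in BV(\Omega)$: since $\zeta_j$ is smooth, $(\zeta_j u)^*=\zeta_j u^*$ off an $\Haus{n-1}$-null set, and hence there exists $N_j\subset\Omega$ with $\Haus{n-1}(N_j)=0$ such that $(\rho_\delta*(\zeta_j u))(x)\to \zeta_j(x)\,u^*(x)$ as $\delta\to 0^+$ for every $x\in\Omega\setminus N_j$. Set $N:=\bigcup_j N_j$, which is still $\Haus{n-1}$-null. For $x\in\Omega\setminus N$, let $J(x)$ denote the finite set of indices $j$ such that $x\in\mathrm{supp}(\zeta_j)$. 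Since $\eps_j(\eps)\to 0^+$ for each such $j$,
\[
u_\eps(x)=\sum_{j\in J(x)}\bigl(\rho_{\eps_j(\eps)}*(\zeta_j u)\bigr)(x)\ \longrightarrow\ \sum_{j\in J(x)}\zeta_j(x)\,u^*(x)=u^*(x),
\]
which is precisely (4).

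\textbf{Trace preservation (6) and main obstacle.} When $\partial\Omega$ is Lipschitz one reorganizes the cover so that $\zeta_1$ is supported in a thin tubular collar $V$ of $\partial\Omega$ (with $\sum_j\zeta_j\equiv 1$ on $\Omega$) and $\mathrm{supp}(\zeta_j)\Subset \Omega$ for $j\ge 2$. Using a $BV$-extension of $u$ to a neighborhood of $\overline\Omega$ (available by Lipschitz regularity of $\partial\Omega$) and mollifying $\zeta_1 u$ in that larger neighborhood, the resulting smooth piece has interior trace on $\partial\Omega$ equal to $\mathrm{Tr}_{\partial\Omega}(\zeta_1 u)=\mathrm{Tr}_{\partial\Omega}(u)$, while the remaining pieces $\rho_{\eps_j}*(\zeta_j u)$, $j\ge 2$, vanish on a neighborhood of $\partial\Omega$ and contribute zero trace. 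The technical heart of the whole argument lies in (4): one must keep $\eps_j(\eps)\to 0^+$ for every single $j$ while simultaneously controlling the summable $L^1$-remainders and the area/total-variation bounds; this decoupling between the smallness of the \emph{total} perturbation and the vanishing of \emph{each} mollification scale is precisely the upgrade over the standard construction, which only yields $L^1$ and strict convergence and cannot by itself see the $\lambda$-representative of $u$ on $\Haus{n-1}$-null sets.
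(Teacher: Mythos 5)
Your construction and argument for points (1)--(5) follow essentially the same route as the paper: the Anzellotti--Giaquinta partition-of-unity mollification with scales $\eps_j(\eps)\to 0^+$ for each fixed $j$, and the pointwise limit \eqref{eq:u_star_moll_point_lim} applied termwise to $\zeta_j u$ (whose precise representative is $\zeta_j u^*$ up to an $\Haus{n-1}$-null set), summed over the at most two active indices at each point. One small omission: your justification of (5) via $\|\rho_{\eps_j}\ast v\|_{L^\infty}\le\|v\|_{L^\infty}$ and $\sum_j\zeta_j\equiv 1$ only bounds $|u_\eps(x)|$ by $\|u\|_{L^\infty(\Omega)}\sum_j(\rho_{\eps_j}\ast\zeta_j)(x)$, and the mollified cutoffs no longer sum to $1$; with only the conditions you list, the overlap-two structure gives the constant $2$, not $1+\eps$. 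You must additionally impose $\|\rho_{\eps_j}\ast\zeta_j-\zeta_j\|_{L^\infty(\Omega)}\le 2^{-j}\eps$, as the paper does in \eqref{eq:conv_cutoff_infty}.

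The genuine gap is in point (6). You mollify the collar piece $\zeta_1\bar u$ of a $BV$-extension $\bar u$ across $\partial\Omega$ and claim that the resulting smooth function has trace exactly $\Tr_{\partial\Omega}(u)$ for every $\eps>0$. This fails: $\rho_{\eps_1}\ast(\zeta_1\bar u)$ is continuous up to $\partial\Omega$, so its trace is its restriction, i.e.\ an average of $\bar u$ over balls $B_{\eps_1}(x)$ centered at boundary points; for fixed $\eps_1>0$ this is not $\Tr_{\partial\Omega}(u)(x)$, and even as $\eps_1\to 0$ it converges to the precise representative of $\bar u$ on $\partial\Omega$, namely the average of the interior trace of $u$ and the exterior trace of the extension, which need not equal $\Tr_{\partial\Omega}(u)$ unless the extension is built with matching traces --- and in any case one obtains convergence, never the exact equality asserted in (6). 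The correct mechanism (the one the paper invokes via \cite[Remarks 1.18 and 2.12]{giusti1984minimal}) stays strictly inside $\Omega$: since $u_\eps-u=\sum_k\big(\rho_{\delta_k}\ast(u\zeta_k)-u\zeta_k\big)$ with the $k$-th term supported in $\Sigma_k\Subset\Omega$, one imposes a decay of the $L^1$-errors fast enough relative to the widths of the collars $\Sigma_k$ so that
\begin{equation*}
\lim_{\rho\to 0}\frac{1}{\rho^n}\int_{\Omega\cap B_\rho(x)}|u_\eps(y)-u(y)|\,dy= 0 \quad\text{for }\Haus{n-1}\text{-a.e. } x\in\partial\Omega,
\end{equation*}
and then the characterization of the trace in \cite[Theorem 3.87]{AFP} gives $\Tr_{\partial\Omega}(u_\eps)=\Tr_{\partial\Omega}(u)$ exactly, without extending $u$ or mollifying across the boundary.
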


\begin{proof}
The proof is based on a slight modification of the construction of the approximating sequence in Anzellotti-Giaquinta theorem, for which we refer for instance to \cite[Theorem 3, Section 5.2.2]{evans2015measure}.

Fix $\eps > 0$. Given a positive integer $m$, we set $\Omega_{0} = \emptyset$, define for each $k \in \mathbb{N}, k \ge 1$ the sets
\[ \Omega_{k} = \left \{ x \in \Omega : \ \mathrm{dist}(x, \partial \Omega) > \frac{1}{m + k} \right \} \cap B_{k + m} \]
and then we choose $m$ such that $|Du|(\Omega \setminus \Omega_{1}) < \eps$. 

We define now $\Sigma_{k} := \Omega_{k+1} \setminus \overline{\Omega_{k-1}}$. Since $(\Sigma_{k})_{k \ge 1}$ is an open cover of $\Omega$, then there exists a partition of unity subordinate to that open cover; that is, a sequence of functions $(\zeta_{k})_{k \ge 1}$ such that:
\begin{enumerate}
	\item $\zeta_{k} \in C^{\infty}_{c}(\Sigma_{k})$;
	\item $0 \le \zeta_{k} \le 1$;
	\item $\sum_{k = 1}^{+\infty} \zeta_{k} = 1$ on $\Omega$.
\end{enumerate}

Next, we take a standard mollifier $\rho$ and for all $k \in \N, k \ge 1$ we choose $\delta_{k} = \delta_{k}(\eps) > 0$ small enough such that the following conditions hold
\begin{align}
\mathrm{supp}(\rho_{\delta_{k}} \ast \zeta_{k}) & \subset \Sigma_{k}, \label{eq:supp_conv_cutoff} \\
  \|\rho_{\delta_{k}} \ast (u\zeta_{k}) - u\zeta_{k}\|_{L^{1}(\Omega)} & < \frac{\eps}{2^{k}}, \label{eq:conv_cutoff_L1} \\
\|\rho_{\delta_{k}} \ast (u\nabla\zeta_{k}) - u\nabla\zeta_{k}\|_{L^{1}(\Omega; \mathbb{R}^{n})} & < \frac{\eps}{2^{k}}, \label{eq:conv_cutoff_nabla} \\
\| \rho_{\delta_{k}} \ast \zeta_{k} - \zeta_{k} \|_{L^{\infty}(\Omega)} & < \frac{\eps}{2^{k}} \label{eq:conv_cutoff_infty}.
\end{align}
Then we define $\displaystyle u_{\eps} := \sum_{k=1}^{+\infty} \rho_{\delta_{k}} \ast (u\zeta_{k})$. 
We notice that, for any fixed $x \in \Omega$, there exists a unique $k = k(x) \ge 1$ such that $x \in \Sigma_k \cap \Sigma_{k+1}$, so that, by \eqref{eq:supp_conv_cutoff},
\begin{equation} \label{eq:sum_locally_finite_terms}
u_{\eps}(x) = \sum_{j = 0}^{1} (\rho_{\delta_{k + j}} \ast (u \zeta_{k + j}))(x).
\end{equation}
Hence, $u_{\eps} \in C^{\infty}(\Omega)$, since locally there are at most two nonzero terms in the sum. 
The proof of points (1) and (2) follows in a standard way from \eqref{eq:supp_conv_cutoff}, \eqref{eq:conv_cutoff_L1} and \eqref{eq:conv_cutoff_nabla} (see for instance \cite[Theorem 3, Section 5.2.2]{evans2015measure} and \cite[Theorem 1.17]{giusti1984minimal}). Arguing in a similar way, for any $(\varphi, \eta) \in C^1_c(\Omega; \R^n \times \R)$ such that $\|(\varphi, \eta)\|_{L^{\infty}(\Omega; \R^n \times \R)} \le 1$ we obtain
\begin{align*}
\int_\Omega \eta + u_\eps \div \varphi \, dx & = \int_\Omega \eta + u \div(\zeta_1 (\rho_{\delta_1} * \varphi)) \, dx + \sum_{k = 2}^{+\infty} \int_\Omega u \div(\zeta_k (\rho_{\delta_k} * \varphi)) \, dx + \\
& - \sum_{k = 1}^{+\infty} \int_\Omega \varphi \cdot \left ( \rho_{\delta_k} * \left (u \nabla \zeta_k \right ) - u \nabla \zeta_k \right ) \, dx \\
& \le \sqrt{1 + |Du|^2}(\Omega) + \sum_{k = 2}^{+ \infty} |Du|(\Sigma_k) + \sum_{k = 1}^{+ \infty} \|\rho_{\delta_{k}} \ast (u\nabla\zeta_{k}) - u\nabla\zeta_{k}\|_{L^{1}(\Omega; \mathbb{R}^{n})} \\
& \le \sqrt{1 + |Du|^2}(\Omega) + 2 |Du|(\Omega \setminus \Omega_1) + \sum_{k = 1}^{+ \infty} \frac{\eps}{2^k} \\
& \le \sqrt{1 + |Du|^2}(\Omega) + 3 \eps.
\end{align*}
Together with the lower semicontinuity of the area functional with respect to the $L^1$ convergence, this implies point (3).
Then, since $\zeta_{k}$ is continuous for every $k$ and $u \in BV(\Omega)$, by \eqref{eq:u_star_moll_point_lim} we have
\begin{equation*}
\lim_{\eps \to 0^+} (\rho_{\delta_{k}} \ast (u \zeta_{k}))(x) = \zeta_{k}(x) u^*(x) \ \text{ for } \Haus{n - 1}\text{-a.e. } x \in \Omega, \end{equation*}
where the $\Haus{n - 1}$-negligible set of points depends only on $u$. Hence, for $\Haus{n-1}$-a.e. $x \in \Omega$, by \eqref{eq:sum_locally_finite_terms} we get
\begin{equation*}
\lim_{\eps \to 0^+} u_{\eps}(x) = \lim_{\eps \to 0^+} \sum_{j = 0}^{1} (\rho_{\delta_{k + j}} \ast (u \zeta_{k + j}))(x) = u^{*}(x) \sum_{j = 0}^{1} \zeta_{k + j}(x) = u^{*}(x).
\end{equation*}
As for point (5), we can assume without loss of generality that $u \in L^{\infty}(\Omega)$. For any $x \in \Omega$ there exists a unique $k \ge 1$ such that $\sum_{j = 0}^1 \zeta_{k+j}(x) = 1$. Therefore we apply \eqref{eq:conv_cutoff_infty} and \eqref{eq:sum_locally_finite_terms} to get
\begin{align*}
|u_{\eps}(x)| & \le  \|u\|_{L^{\infty}(\Omega)} \sum_{j = 0}^{1} (\rho_{\delta_{k + j}} \ast \zeta_{k + j}))(x) \le \|u\|_{L^{\infty}(\Omega)} \left ( 1 + \sum_{j = 0}^{1} |(\rho_{\delta_{k + j}} \ast \zeta_{k + j})(x) - \zeta_{k + j}(x)| \right) \\
& \le (1 + \eps) \|u\|_{L^{\infty}(\Omega)}.
\end{align*}
Finally, if $\Omega$ is an open set with bounded Lipschitz boundary, then the trace operator is well defined and, thanks to \cite[Theorem 3.87]{AFP}, it satisfies
\begin{equation*}
\lim_{\rho \to 0} \frac{1}{\rho^n} \int_{\Omega \cap B_\rho(x)} |u(y) - \Tr_{\partial \Omega}(u)(x)| \, d y = 0 \text{ for } \Haus{n-1}\text{-a.e. } x \in \partial \Omega.
\end{equation*}
Hence, arguing as in \cite[Remarks 1.18 and 2.12]{giusti1984minimal}, we conclude that (6) holds true.
\end{proof}

We exploit Theorem \ref{thm:Anzellotti_Giaquinta} in order to obtain the following finer $\lambda$-approximation theorem, which is then applied to the approximation of $\lambda$-pairings (Theorem \ref{lemma:pairlambda-vs-area} below) and is exploited at length in Section \ref{sec:admissible_measures}. In addition, it represents one of the key tools in the proofs of the main results of \cite{LeoComi}.

\begin{theorem} [$\lambda$-approximation] \label{thm:smooth_lambda_approx}
Let $\lambda:\Omega\to [0,1]$ be a given Borel function. Then for every $u\in BV(\Omega)$ there exists a sequence $(u^\lambda_k)_{k \in \N}\subset C^\infty(\Omega) \cap BV(\Omega)$ such that we have the following:
\begin{enumerate}
\item $u^\lambda_k \to u$ in $BV(\Omega)$-strict as $k \to + \infty$,
\item $u^\lambda_k(x) \to u^\lambda(x)$ for $\Haus{n-1}$-a.e. $x \in \Omega$ as $k\to +\infty$,
\item $\displaystyle \lim_{k \to + \infty} \sqrt{1 + |Du^{\lambda}_k|^2}(\Omega) = \sqrt{1 + |Du|^2}(\Omega)$,
\item $\|u^\lambda_k\|_{L^\infty(\Omega)} \le \left( 1 + \frac{1}{k} \right) \|u\|_{L^{\infty}(\Omega)}$ for all $k \in \N$,
\item if $\Omega$ is an open set with bounded Lipschitz boundary, then $\Tr_{\partial \Omega}(u^\lambda_k)(x) = \Tr_{\partial \Omega}(u)(x)$ for $\Haus{n-1}$-a.e. $x \in \partial \Omega$ and all $k \in \N$.
\end{enumerate}
\end{theorem}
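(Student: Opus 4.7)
My approach is to adapt the Anzellotti-Giaquinta construction of Theorem \ref{thm:Anzellotti_Giaquinta} so that the pointwise limit of the approximating smooth functions is $u^\lambda$, rather than $u^* = u^{1/2}$. The key observation is that, for a radial standard mollifier $\rho$ with associated one-dimensional cumulative distribution $F(t) := \int_{\{z\cdot e_1 < t\}}\rho(z)\,dz$, and for a jump point $x \in J_u$ with normal $\nu_u(x)$, the shifted convolution satisfies
\[
(\rho_\eps \ast u)(x + \eps h) \longrightarrow F\!\left(-h \cdot \nu_u(x)\right)\, u^+(x) + \bigl(1 - F\!\left(-h \cdot \nu_u(x)\right)\bigr)\, u^-(x) \quad \text{as } \eps \to 0^+.
\]
Consequently, a shift $h = h(x)$ with $-h(x) \cdot \nu_u(x) = F^{-1}(\lambda(x))$ produces exactly $u^\lambda(x)$ in the limit; if $\mathrm{supp}\,\rho \subset B_1$ then $|h| \le 1$ uniformly, which accommodates even the endpoints $\lambda(x) \in \{0, 1\}$.

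To realise this at the level of smooth functions I would proceed as follows. First, extend $\nu_u$ Borel-measurably to all of $\Omega$ (e.g.\ as a fixed constant outside $J_u$) and define, by the prescription above, a bounded Borel vector field $h : \Omega \to \R^n$. Next, exploit the $\Haus{n-1}$-rectifiability of $J_u$ together with Lusin's theorem applied to $\Haus{n-1} \res J_u$ to decompose $J_u$ into countably many small pieces $K_j$ contained in open sets $U_j$ on which $\lambda$ and $\nu_u$ are nearly constant, with approximate values $\lambda_j, \nu_j$; fix a constant shift $h_j$ realising $-h_j \cdot \nu_j = F^{-1}(\lambda_j)$. Refine the Anzellotti-Giaquinta partition of unity $\{\zeta_k\}$ subordinate to the annuli $\Sigma_k$ so that the refinement $\{\zeta_{k,j}\}$ is also subordinate to the cover $\{U_j\}$ near $J_u$ (and uses the unshifted mollifier away from $J_u$), and set
\[
u^\lambda_k := \sum_{k,j} \rho_{\delta_{k,j}}^{(h_j)} \ast \bigl(u\, \zeta_{k,j}\bigr),
\]
where $\rho^{(h_j)}(y) := \rho(y - h_j)$; the scales $\delta_{k,j}$ are then fixed by a diagonal argument so that the shifted analogues of the $L^1$-, gradient-, and sup-bound estimates used in the proof of Theorem \ref{thm:Anzellotti_Giaquinta} hold with total error at most $1/k$.

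The verification of (1)--(6) then mirrors the argument of Theorem \ref{thm:Anzellotti_Giaquinta}: shifting a mollifier by a bounded vector does not spoil its $L^1$-approximation properties, so items (1), (3), (4), (5) follow from the choice of the $\delta_{k,j}$, while (6) holds because each $\zeta_{k,j}$ vanishes near $\partial\Omega$. The heart of the proof is (2): on $\Omega \setminus S_u$, where $u^\lambda(x) = \widetilde u(x)$, the shifted mollifications still converge to $\widetilde u(x)$ because the shifts $\delta_{k,j} h_j$ are infinitesimal and the Lebesgue point property is preserved; on $\Haus{n-1}$-a.e. $x \in J_u$, the biased kernel associated to the piece $K_j \ni x$ produces by design the convex combination $\lambda(x)u^+(x) + (1-\lambda(x))u^-(x)$. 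I expect the main technical obstacle to lie precisely in this last measurable coordination: one must organise the countable family of shifts $\{h_j\}$ and the scales $\{\delta_{k,j}\}$ so that at $\Haus{n-1}$-a.e. $x \in J_u$ the piecewise approximations $(\nu_j, \lambda_j) \to (\nu_u(x), \lambda(x))$ become exact in the limit $k \to \infty$, while controlling the cumulative errors in $|Du|(\Omega)$ and $\sqrt{1+|Du|^2}(\Omega)$.
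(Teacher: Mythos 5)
Your central idea is the same as the paper's: bias the mollifier by a shift of order $\eps$ along $\nu_u(x)$, calibrated through the one--dimensional profile of the mollified step function so that the limit at a jump point is $\lambda(x)u^+(x)+(1-\lambda(x))u^-(x)$ (this is exactly the paper's definition of $\tau_k(x)$ via \eqref{eq:tauprop}), combined with Lusin/Egoroff on $J_u$ and a diagonal selection. However, as written the proposal has a genuine gap precisely where you flag the "main technical obstacle": you never show that the piecewise--constant shifts $h_j$ and the scales $\delta_{k,j}$ can be coordinated so that (a) at $\Haus{n-1}$--a.e.\ $x\in J_u$ the approximate data $(\nu_j,\lambda_j)$ converge to $(\nu_u(x),\lambda(x))$ \emph{and} (b) the total variation and area bounds of item (1) and (3) survive. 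Point (b) is not automatic in your construction: since the shifts differ across pieces, the cancellation $\sum u\nabla\zeta_{k,j}=0$ used in the Anzellotti--Giaquinta gradient estimate is applied to terms mollified with \emph{different} kernels, and the overlap regions between a shifted piece $U_j$ and the unshifted background necessarily meet $J_u\setminus K_j$ and hence can carry jump mass; one must choose the $K_j$ disjoint and exhausting $|D^j u|$, shrink the $U_j$ so that the double--counted mass $\sum_j|Du|(U_j\setminus K_j)$ is small, and arrange that $\Haus{n-1}$--a.e.\ point of $K_j$ sees only the $h_j$--shifted pieces (otherwise the pointwise limit is a spurious convex combination of the shifted and unshifted limits). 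This is the content of the paper's Steps two and three, where the shift is instead implemented by gluing a translated copy of $u_k$ to $u_k$ inside finitely many \emph{disjoint} balls from a Vitali--Besicovitch covering of a compact $S_\eta\subset J_u$, and the gluing error is controlled by the quantitative annulus estimate $|Du|(C_{\delta,k})\le c_n\delta\,|Du|(B_r(x))$.

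Two further omissions would make the argument fail as stated. First, at a point $x\in K_j$ the constant shift $h_j$ produces in the limit $F(h_j\cdot\nu_u(x))u^+(x)+(1-F(h_j\cdot\nu_u(x)))u^-(x)$, whose distance from $u^\lambda(x)$ is of order $\big(|\lambda_j-\lambda(x)|+|\nu_j-\nu_u(x)|\big)\,(u^+(x)-u^-(x))$; for $u\notin L^\infty(\Omega)$ the jump amplitude is unbounded, so uniform smallness of $(\lambda_j,\nu_j)-(\lambda,\nu_u)$ does not give pointwise closeness. The paper avoids this by first proving the result for bounded $u$, restricting to $S=\{u^+-u^->\eps\}\cap A$ with $A\Subset\Omega$, and then treating general $u$ by a diagonal argument over the truncations $T_m(u)$; your plan contains no reduction to the bounded case. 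Second, the conclusion of your scheme is at best convergence of $u^\lambda_k$ to $u^\lambda$ in $\Haus{n-1}$--measure on $J_u$ (since the covers must be refined as $k\to\infty$); item (2) then requires an explicit extraction of a subsequence converging $\Haus{n-1}\restrict J_u$--a.e., which should be stated. None of these obstacles is fatal --- the paper resolves all of them --- but they constitute the actual proof rather than routine verifications, so the proposal as it stands is incomplete.
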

\begin{proof}
We first prove the theorem under the assumption $\|u\|_{L^{\infty}(\Omega)} < +\infty$. It is enough to construct a sequence $(\ulk)_k$ of smooth functions that converge to $u$ in $BV(\Omega)$-strict, to $u^* =\widetilde{u}= \ul$ $\Haus{n-1}$-a.e. on $\Omega\setminus J_u$, and to $\ul$ locally in measure with respect to $\mu = \Haus{n-1}\restrict J_u$. Then, we conclude by extracting a suitable subsequence that converges $\mu$-almost everywhere to $\ul$, hence $\Haus{n-1}$-a.e. on $\Omega$.  

Let $(u_\eps)_{\eps > 0}$ be the smooth approximation of $u$ given by Theorem \ref{thm:Anzellotti_Giaquinta}. We choose a sequence $\eps = \frac{1}{k}$, and we set $u_k := u_{\frac{1}{k}}$ for simplicity. Since $u_k(x) \to u^*(x)$ for $\Haus{n-1}$-a.e. $x\in \Omega$, the idea is to define $\ulk$ as a suitable perturbation of $u_k$ near the jump set $J_u$, and then show that $\ulk$ satisfies the convergence in measure stated above. The proof will be split into some steps.
\medskip

\textit{Step one: local construction and estimates.} 

We fix $\eps>0$ and a Borel set $A\Subset \Omega$, then we consider the set 
\[
S = S(u,A,\eps) = \left\{x\in A\cap J_{u}:\ u^+(x)-u^-(x)>\eps\right\}\,.
\]
Notice that $\Haus{n-1}(S)<+\infty$, hence $\mu(S)<+\infty$. Up to choosing the parameter $m$ in the proof of Theorem \ref{thm:Anzellotti_Giaquinta} to be large enough, we can assume that $u_k = u\ast \rho_k$ on $A$, where $\rho_k(|x|) = k^n \rho(k|x|)$ is a standard mollifier with support in $B_{1/k}$, for $k$ sufficiently large. For $\Haus{n-1}$-a.e. $x\in S$, we define the \textit{blow-up of $u$ at $x$} as the step function $u_{x, \infty}:\R^{n}\to \R$ defined by
\[
u_{x, \infty}(y) = 
\begin{cases}
u^{+}(x) & \text{if }(y-x)\cdot \nu_u(x) \ge 0,\\
u^{-}(x) & \text{otherwise}.
\end{cases}
\] 
Therefore we have
\[
\lim_{r\to 0} \frac{1}{r^{n}} \int_{B_r(x)}|u(y) - u_{x, \infty}(y)|\, dy = 0\,.
\]
For $\Haus{n-1}$-a.e. $x\in S$ we define the Borel function $\tau = \tau_k(x)\in [-1,1]$ as the unique implicit solution of 
\begin{equation}\label{eq:tauprop}
u_{x, \infty}\ast \rho_k(x + \tau\nu_u(x)/k) = u^\lambda(x)\,.
\end{equation}
Note that $\tau_k(x)$ can be written as the composition of a continuous function, depending only on the mollifier $\rho_k$, with the function $\lambda(x)$.

Fix $\delta \in (0,1)$ to be later chosen, and consider the following perturbation of $u_k$ inside a ball $B = B_r(x)$ centered at $x\in S$:
\begin{equation} \label{def:u_tau_B}
u^{\tau,B}_k(y) := \phi_{r,\delta}(y-x)u_k(y+\tau\nu_u(x)/k)  + \big(1-\phi_{r,\delta}(y-x)\big) u_k(y)\,,
\end{equation}
where $\tau\in [-1,1]$ and $\phi_{r,\delta}(z)\in [0,1]$ is a radially symmetric cut-off function of class $C^\infty$, with compact support in $B_r$ and such that $\phi_{r,\delta}(z) = 1$ if $|z|<(1-\delta)r$ and $\|\nabla\phi_{r,\delta}\|_{L^{\infty}(B_r; \R^n)} < \frac{2}{\delta r}$. Clearly, $u^{\tau,B}_k$ is a smooth function obtained by locally ``gluing'' a suitable translation of $u_k$ with $u_k$ itself, it coincides with $u_k$ outside $B$, and satisfies
\begin{equation}\label{eq:utaukappainx}
u^{\tau,B}_k(x) = u_k(x+\tau\nu_u(x)/k) = u\ast \rho_{k}(x+\tau\nu_u(x)/k)\,.
\end{equation}
Its gradient is given by
\[
\nabla u^{\tau,B}_k(y) = \nabla u_{k}(y) + \nabla\phi_{r,\delta}(y-x)\big(u_{k}(y+v_{k}) - u_{k}(y)\big) + \phi_{r,\delta}(y-x)\big(\nabla u_{k}(y+v_{k}) - \nabla u_{k}(y)\big)\,.
\]
where we have set $v_{k} = \frac{\tau}{k}\nu_u(x)$. For the sake of simplicity, we also set $C_{\delta} = B\setminus B_{(1-\delta)r}(x)$. We find that
\begin{align}\nonumber
\int_B |\nabla u^{\tau,B}_k(y)| \, dy &\le 
\int_{B_{(1-\delta)r}(x)} |\nabla u_k(y+v_{k})|\, dy + \int_{C_{\delta}} |\nabla u_{k}(y)| \, dy + \frac{2}{\delta r}\int_{C_{\delta}}|u_{k}(y+v_{k}) - u_{k}(y)|\, dy \\\label{eq:stimashift1}
&\qquad\qquad + \int_{C_{\delta}} |\nabla u_{k}(y+v_{k}) - \nabla u_{k}(y)|\, dy\,.
\end{align}
Then, owing to the fact that $|Du_{k}|(A) \le |Du|(A + B_{1/k})$, the first term in the right-hand side of \eqref{eq:stimashift1} can be estimated as follows:
\begin{equation}\label{eq:stimanablauBdk}
\int_{B_{(1-\delta)r}(x)} |\nabla u_k(y+v_{k})|\, dy \le |Du|(B_{\delta,k})\,,
\end{equation}
where we have set 
\[
B_{\delta,k} = B_{(1-\delta)r}(x) + B_{2/k} = B_{(1-\delta)r + 2/k}(x)\,.
\]
On observing that $|v_{k}|\le k^{-1}$, the third term in the right-hand side of \eqref{eq:stimashift1} can be estimated as follows:
\begin{equation}\label{eq:stimauCdelta}
\frac{2}{\delta r}\int_{C_{\delta}}|u_{k}(y+v_{k}) - u_{k}(y)|\, dy \le \frac{2}{k\delta r} |Du|(C_{\delta,k})\,,
\end{equation}
where $C_{\delta,k} = C_{\delta} + B_{2/k}$. 
Concerning the fourth term in the right-hand side of \eqref{eq:stimashift1}, we set $\rho_{k,x}(z) = \rho_k(z-v_{k})$ and denote by $u_{k,z}$ the average of $u$ on $B_{2/k}(z)$, so that we obtain
\begin{align*}
\int_{C_{\delta}} |\nabla u_{k}(y+v_{k}) - \nabla u_k(y)|\, dy &\le \int_{C_{\delta}} \left|\int_{\Omega} u(y)\Big(\nabla\rho_{k,x}(z-y) - \nabla \rho_k(z-y)\Big) \, dy\right|\, dz\\
&= \int_{C_{\delta}} \left|\int_{B_{2/k}(z)} (u(y) - u_{k,z})\Big(\nabla\rho_{k,x}(z -y) - \nabla \rho_k(z-y)\Big)\, dy\right| dz \\
&\le \frac{\|\nabla^2\rho_{k}\|_{L^{\infty}(B_1; \R^{n^2})}}{k} \int_{C_{\delta}} \int_{B_{2/k}(z)} |u(y) - u_{k,z}|\, dy\, dz\\
&\le Ck^{n+1} \int_{C_{\delta}} \int_{B_{2/k}(z)} |u(y) - u_{k,z}|\, dy\, dz\\
&\le Ck^{n} \int_{C_{\delta}} |Du|\big(B_{2/k}(z)\big)\, dz \,,
\end{align*}
where we have used the fact that $\|\nabla^2\rho_{k}\|_{L^{\infty}(B_1; \R^{n^2})} \le Ck^{n+2}$ and, in the last step, the Poincar\`e-Wirtinger inequality on $B_{2/k}(z)$ (note that in this last estimate, as well as in the next ones, we will denote by $C$ a dimensional constant that can possibly change from one line to another). We can push further the estimate by noticing that 
\begin{align*}
\int_{C_{\delta}} |Du|\big(B_{2/k}(z)\big)\, dz &= \int_{C_{\delta}}\int_{B_{2/k}(z)} \, d|Du|(y)\, dz\\
&= \int_{C_{\delta} + B_{2/k}}\int_{C_{\delta}\cap B_{2/k}(y)} \, dz\, d|Du|(y)\\
&= \int_{C_{\delta} + B_{2/k}} |C_{\delta}\cap B_{2/k}(y)|\, d|Du|(y)\\
&\le \frac{C}{k^n} |Du|\big(C_{\delta} + B_{2/k}\big) = \frac{C}{k^n} |Du|\big(C_{\delta,k}\big)\,.
\end{align*}
This leads us to
\begin{equation}\label{eq:stimashift2}
\int_{C_{\delta}} |\nabla u_k(y+v_{k}) - \nabla u_k(y)|\, dy \le C|Du|\big(C_{\delta,k}\big)\,.
\end{equation}
Consequently, if we plug \eqref{eq:stimanablauBdk}, \eqref{eq:stimauCdelta}, \eqref{eq:stimashift2} into  \eqref{eq:stimashift1}, we obtain
\begin{equation}\label{eq:mainlocalest}
\int_B |\nabla u^{\tau,B}_k| \, dy \le 
|D u|(B_{\delta,k}) + \left(C + \frac{2}{k\delta r}\right) |Du|\big(C_{\delta,k}\big)\,,
\end{equation}
where $C$ is a constant only depending on the dimension $n$. Note that as soon as $k\ge (r\delta)^{-1}$ the inequality improves to
\begin{equation} \label{eq:mainocaltest_1}
\int_B |\nabla u^{\tau,B}_k| \, dy \le 
|D u|(B_{\delta,k}) + (C+2) |Du|\big(C_{\delta,k}\big)\,.
\end{equation}
\medskip

\textit{Step two: from local to global.}

Here we show how to use the local construction and the estimate \eqref{eq:mainlocalest} provided by Step one, to define a sequence of approximations that behaves well on a compact subset of $S$ with large measure. To this aim we must first guarantee the continuity and the uniformity of some quantities that appear in Step one, and then apply the appropriate covering theorem. 

By Lusin and Egoroff Theorems, for every $\eta \in (0, 1)$ we can find a compact set $S_{\eta}\subset S$ satisfying the following properties:
\begin{itemize}
\item[(i)] $\Haus{n-1}(S_{\eta}) \ge (1-\eta)\Haus{n-1}(S)$;
\item[(ii)] the functions $\lambda,\nu_{u}, u^{+}, u^{-}$ restricted to $S_{\eta}$ are continuous;
\item[(iii)] if we set 
\[
\Delta_{\eta}(r) := \sup_{x\in S_{\eta}} \frac{1}{r^{n}} \int_{B_{r}(x)} |u(y) - u_{x, \infty}(y)|\, dy\,,
\] 
we have $\displaystyle\lim_{r\to 0}\Delta_{\eta}(r) = 0$;
\item[(iv)] there exists $0<r_{\delta, \eta}<1$ such that 
\begin{equation} \label{eq:CdkAsymptotic}
|Du|(C_{\delta,k}(x,r)) \le c_n \delta\, |Du|(B_{r}(x)) 
\end{equation}
for all $0<r<r_{\delta, \eta}$, $0 < \delta < \frac{1}{3}$, $k > (\delta r)^{-1}$, and $x\in S_{\eta}$, where $C_{\delta, k}(x,r)$ is defined as in the previous step (here the dependence upon $x$ and $r$ is explicitly written for the sake of clarity) and $c_n = \max\{1, 7(n-1)\}$.
\end{itemize}
We remark that showing (i), (ii), and (iii) is standard. As for (iv), in the case $n \ge 2$ we notice that the asymptotic behaviour of $|Du|(C_{\delta,k})$ for $k > \frac{1}{r} \max \left \{ \frac{1}{\delta}, \frac{2}{1-\delta} \right \}$ and as $r\to 0$ is
\begin{align*}
|Du|(C_{\delta,k}) & = |D^d u|(C_{\delta,k}) + |D^j u|(C_{\delta,k}) \\
& =  \big (1 + o(1) \big ) \omega_{n-1} |u^{+}(x) - u^{-}(x)| \left[(r+2/k)^{n-1} - (r(1-\delta)-2/k)^{n-1}\right] \\
& =  \big (1 + o(1) \big ) \omega_{n-1} |u^{+}(x) - u^{-}(x)| r^{n-1}\left[(1+2/(kr))^{n-1} - ((1-\delta)-2/(kr))^{n-1}\right]\\
& =  \big (1 + o(1) \big ) \omega_{n-1} |u^{+}(x) - u^{-}(x)| r^{n-1}\delta \left[4/(k\delta r) +1 \right] (n-1) (1 + 2 \delta)^{n-2} \\ 
& \le 7 (n-1)\delta\, |Du|(B_{r}(x))\,,
\end{align*}
assuming $\delta$ sufficiently small so that $(1 + 2 \delta)^{n-2} \le \frac{7}{6}$. Then, it is easy to see that $\frac{1}{\delta} > \frac{2}{1-\delta}$ for all $\delta \in \left (0, \frac{1}{3} \right )$, so that we obtain (iv) for $n \ge 2$. In the case $n = 1$, we notice that by the finiteness of the total variation of $Du$, $S$ is necessarily a finite set, so that we can choose $S_\eta = S$ and easily check properties (i)--(iii) above. On the other hand, property (iv) follows from the observation that, for all $x\in S$,
\[
|Du|(C_{\delta,k}(x,r)) \le |Du|(B_{r}(x)\setminus \{x\}) \to 0\qquad \text{as }r\to 0\,,
\]
while $|Du|(B_{r}(x)) > \eps$ by the definition of $S$.

Fix now an open set $U$ containing $S_{\eta}$, then consider the family $\cF$ of balls centered in $S_{\eta}$ and contained in $U$, with radius $r$ so small that $\Delta_{\eta}(r) < \eps$. 
By Vitali-Besicovitch Covering Theorem, we can find a finite and mutually disjoint family of balls $\{B^{i}\}_{i=1}^{N}$, with $B^{i} = B_{r_{i}}(x_{i})\in \cF$, $0<r_{i}<r_{\delta, \eta}$, and $N$ depending on $\delta$ and $\eta$, such that
\[
\Haus{n-1}\left(S_{\eta}\cap \bigcup_{i=1}^{N}B^{i}\right) \ge (1-\eta)\Haus{n-1}(S_{\eta}) \ge (1-\eta)^{2}\Haus{n-1}(S)\,,
\]
where we have used property (i) in the last inequality. 
\medskip

\textit{Step three: total variation estimate.} 

We set $r_{0} = \min \{r_{i}:\ i=1,\dots,N\}$ and consider the sequence $u_{k}^{\lambda}$ constructed by replacing $u_{k}$ with $u^{\tau,B}_{k}$ inside each ball $B=B^{i}$, as obtained and described in the previous steps. Hereafter we show that the total variation of $Du^{\lambda}_{k}$ is controlled by that of $Du$, up to an error that goes to zero as $\delta\to 0$ (hence as $k\to\infty$). Indeed, if we set $C^{i}_{\delta} = B^{i} \setminus B_{(1-\delta)r_{i}}(x_{i})$, we can assume, up to a small perturbation of $r_{i}$, that $|Du|(\partial C^{i}_{\delta}) = 0$ for all $i$, hence thanks to the estimate \eqref{eq:mainlocalest} we obtain
\begin{align}
\nonumber
|Du^{\lambda}_{k}|(\Omega) &\le |Du_{k}|\left(\Omega\setminus \bigcup_{i=1}^{N} B^{i}\right) + \sum_{i=1}^{N}|Du^{\lambda}_{k}|(B^{i})\\\label{eq:stimaDulambda}
&\le |Du_{k}|\left(\Omega\setminus \bigcup_{i=1}^{N} B^{i}\right) 
+ \sum_{i=1}^{N} \left(|Du|(B^{i}_{\delta,k}) + \Big(C+\frac{2}{k\delta r_{i}}\Big)|Du|(C^{i}_{\delta,k})\right)\,.
\end{align}
Let now assume $k\ge (\delta r_{0})^{-1}$. Thanks to \eqref{eq:mainocaltest_1}, \eqref{eq:CdkAsymptotic} and recalling that the balls $B^{i}$ are mutually disjoint, we have
\begin{equation}\label{eq:sommacorone}
\sum_{i=1}^{N} \Big(C+\frac{2}{k\delta r_{i}}\Big)|Du|(C^{i}_{\delta,k}) \le (C+2)\sum_{i=1}^{N}|Du|(C^{i}_{\delta,k}) \le \widetilde{C}\delta \sum_{i=1}^{N} |Du|(B^{i}) \le \widetilde{C}\delta |Du|(\Omega)\,.
\end{equation}
Thanks to Theorem \ref{thm:Anzellotti_Giaquinta}, we have the strict convergence of $u_{k}$ to $u$ on $\Omega$, hence by the lower semicontinuity of $|Du_{k}|$ restricted to the open set $\bigcup_{i=1}^{N} B^{i}$ we obtain
\[
\limsup_{k\to\infty} |Du_{k}|\left(\Omega\setminus \bigcup_{i=1}^{N} B^{i}\right) = |Du|(\Omega) - \liminf_{k\to\infty} |Du_{k}|\left(\bigcup_{i=1}^{N} B^{i}\right) \le |Du|\left(\Omega\setminus \bigcup_{i=1}^{N} B^{i}\right)\,,
\]
hence by selecting $k$ large enough we can enforce 
\begin{equation}\label{eq:stimalimsup}
|Du_{k}|\left(\Omega\setminus \bigcup_{i=1}^{N} B^{i}\right) \le |Du|\left(\Omega\setminus \bigcup_{i=1}^{N} B^{i}\right) + \delta r_{0}\,.
\end{equation}
By combining \eqref{eq:stimaDulambda}, \eqref{eq:sommacorone}, and \eqref{eq:stimalimsup} we finally get
\begin{align}\nonumber
|Du^{\lambda}_{k}|(\Omega) &\le |Du|\left(\Omega\setminus \bigcup_{i=1}^{N} B^{i}\right) + \delta r_{0}
+ \sum_{i=1}^{N} |Du|(B^{i}) + \widetilde{C}\delta |D u|(\Omega)\\\label{eq:Dulambdafinal}
&= (1+\widetilde{C}\delta)|Du|(\Omega)\,.
\end{align}
This shows that the total variation of $Du^{\lambda}_{k}$ is arbitrarily close to the total variation of $Du$, up to choosing $k$ large enough. This will eventually lead to the $BV$-strict approximation property (see Step five). Arguing in a similar way, we can obtain an analogous upper bound for the area functional. This is due to the coincidence of the singular parts of the area and the total variation functionals (see \eqref{area_factor_decomposition_eq}) hence the previous construction leads to an estimate like \eqref{eq:Dulambdafinal} with $\sqrt{1 + |Du^{\lambda}_{k}|^{2}}$ and $\sqrt{1 + |Du|^{2}}$ replacing the total variation functionals.
\medskip

\textit{Step four: pointwise closeness of $u^{\lambda}_{k}$ to $u^{\lambda}$ on $S_{\eta}$.}

Let us fix $B^{i}=B_{r_{i}}(x_{i})$ for $i=1,\dots,N$, and choose $x\in B_{(1-\delta)r_{i}}(x_{i})\cap S_{\eta}$. We would like to prove that $u^{\lambda}_{k}(x)$ is close to $u^{\lambda}(x)$. We have
\begin{align*}
|u^{\lambda}_{k}(x) - u^{\lambda}(x)| &\le 
|u^{\lambda}_{k}(x) - u^{\lambda}(x_{i})| + |u^{\lambda}(x_{i}) - u^{\lambda}(x)| \le |u^{\lambda}_{k}(x) - u_{x,\infty}\ast \rho_{k}(x+v_{k}(x))| + \\
& + |u_{x,\infty}\ast \rho_{k}(x+v_{k}(x)) - u_{x_{i},\infty}\ast \rho_{k}(x_{i}+v_{k}(x_i))| + \\
&+ |u_{x_{i},\infty}\ast \rho_{k}(x_{i}+v_{k}(x_i)) - u^{\lambda}(x_{i})| + |u^{\lambda}(x_{i}) - u^{\lambda}(x)|\\
& =: A_{1}+A_{2}+A_{3}+A_{4}\,.
\end{align*}
First of all, by \eqref{eq:utaukappainx} and (iii) we obtain 
\begin{align*}
A_{1} &\le \int_{B_{1/k}(x+v_{k}(x))}\rho_{k}\big(x+v_{k}(x) - y\big)\, |u(y) - u_{x,\infty}(y)|\, dy\\
&\le k^{n}\|\rho\|_{L^{\infty}(B_1)} \int_{B_{2/k}(x)} |u(y) - u_{x,\infty}(y)|\,dy\ \to 0\qquad \text{as }k\to \infty\,.
\end{align*}
Then, $A_{3}=0$ by \eqref{eq:tauprop}, recalling that $v_{k}(y) = \frac{\tau(y)}{k}\nu_u(y)$, while thanks to (ii) the terms $A_{2}$ and $A_{4}$ are close to $0$ if $x$ is close to $x_{i}$, which in turn depends on the fact that $r_{i}$ is taken small enough. This shows that, by choosing $r_{i}$ small and $k\ge (\delta r_{0})^{-1}$ large enough, we can enforce the required pointwise closeness.
\medskip

\textit{Step five: conclusion.}
Let us fix three positive and infinitesimal sequences $\eps_{j}, \eta_{j}, \delta_{j}$, as well as a monotone sequence $A_{j}\Subset \Omega$ of Borel sets, such that $\bigcup_{j} A_{j} = \Omega$. Let $U_{j}$ be a sequence of open sets as in Step two, satisfying the extra condition $|U_{j}|< 2^{-j}$ for all $j$. For any integer $j\ge 1$ we can apply the previous steps with $A=A_{j}$ (Step one) and $U=U_{j}$ (Step two), and select from the initial sequence $(u_{k})_{k}$ a suitable subsequence, that we do not relabel, whose elements can be locally perturbed according to the procedure described in Step one. We shall perform this construction iteratively, so that the sequence that we extract at the $(j+1)$-th stage is also a subsequence of the one obtained at the $j$-th stage. By diagonal selection we obtain a sequence relabeled as $u^{\lambda}_{k}$. Owing to Step four, $u^{\lambda}_{k}$ converges to $u^{\lambda}$ in $\Haus{n-1}$-measure on the whole jump set $J_{u}$ as $k\to +\infty$, while by the choice of $U_{j}$ we have that $u^{\lambda}_{k}$ converges to $u$ in $L^{1}(\Omega)$. Hence, by Step three, $u^{\lambda}_{k}$ converges to $u$ in $BV(\Omega)$-strict, which is point (1) of the statement. Similarly, we deduce point (3). Up to a further extraction of a subsequence, we obtain the pointwise convergence $\Haus{n-1}$-a.e. on $\Omega$. Then, the bound $|u^\lambda_k(x)| \le \left( 1 + \frac{1}{k} \right ) \|u\|_{L^{\infty}(\Omega)}$ for all $x \in \Omega$ follows immediately from the definition of $u_k^\lambda$ in terms of $u^{\tau,B}_{k}$ and \eqref{def:u_tau_B}, thus concluding the proof of the theorem under the assumption $\|u\|_{L^\infty(\Omega)}<+\infty$. To obtain the complete proof, we apply the previous steps to the sequence of truncations $T_{m}(u)$, and obtain $(T_{m}(u)_{k}^{\lambda})_{k}$ for each $m\in \N$. The diagonal sequence $T_{k}(u)_{k}^{\lambda}$ can then be easily shown to satisfy all the required properties. Finally, point (5) can be proved as in Theorem \ref{thm:Anzellotti_Giaquinta}, given that the sequence $(u^k_\lambda)_k$ is obtained by a local perturbation of the approximating sequence $(u_k)_k$ from Anzellotti-Giaquinta approximation theorem.
\end{proof}

We state for later use a simple consequence of Theorem \ref{thm:smooth_lambda_approx}.

\begin{corollary} \label{cor:weak_area_conv}
Let $\lambda:\Omega\to [0,1]$ be a given Borel function. Let $u\in BV(\Omega)$ and $(u^\lambda_k)_{k \in \N}\subset C^\infty(\Omega)$ be the approximating sequence of Theorem \ref{thm:smooth_lambda_approx}. Then we have
\begin{equation*}
\sqrt{1 + |Du^{\lambda}_k|^2}\ \weakto\ \sqrt{1 + |Du|^2} \ \text{ in } \Mm(\Omega).
\end{equation*}
\end{corollary}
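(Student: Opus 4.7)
The plan is to deduce the weak-$\ast$ convergence from two ingredients that are already in hand thanks to Theorem \ref{thm:smooth_lambda_approx}: the $L^{1}(\Omega)$ convergence $u^{\lambda}_{k}\to u$ coming from the $BV$-strict approximation (point (1)), and the convergence of total masses $\sqrt{1+|Du^{\lambda}_{k}|^{2}}(\Omega)\to \sqrt{1+|Du|^{2}}(\Omega)$ (point (3)). I will combine these with the classical lower semicontinuity of the area functional with respect to $L^{1}$ convergence on arbitrary open subsets, then invoke a standard Portmanteau-type characterization of weak-$\ast$ convergence of nonnegative Radon measures.

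Set $\mu_{k}=\sqrt{1+|Du^{\lambda}_{k}|^{2}}$ and $\mu=\sqrt{1+|Du|^{2}}$. First I would note that, since $u^{\lambda}_{k}\to u$ in $L^{1}(\Omega)$, the $L^{1}$ lower semicontinuity of $v\mapsto \sqrt{1+|Dv|^{2}}(U)$ (for any fixed open $U\subset\Omega$, as recalled in \cite{giusti1984minimal}) yields
\[
\mu(U)\ \le\ \liminf_{k\to+\infty}\mu_{k}(U)\qquad\text{for every open }U\subset\Omega.
\]
Next, for a compact set $K\subset\Omega$ I would apply the previous estimate to the open set $\Omega\setminus K$ and combine it with the mass convergence $\mu_{k}(\Omega)\to\mu(\Omega)$:
\[
\limsup_{k\to+\infty}\mu_{k}(K)
=\lim_{k\to+\infty}\mu_{k}(\Omega)-\liminf_{k\to+\infty}\mu_{k}(\Omega\setminus K)
\le\mu(\Omega)-\mu(\Omega\setminus K)=\mu(K).
\]

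With the two inequalities $\liminf_{k}\mu_{k}(U)\ge \mu(U)$ for every open $U$ and $\limsup_{k}\mu_{k}(K)\le \mu(K)$ for every compact $K$ at our disposal, the weak-$\ast$ convergence $\mu_{k}\weakto \mu$ in $\Mm(\Omega)$ follows from the standard equivalence between weak-$\ast$ convergence of nonnegative finite Radon measures and the Portmanteau-type conditions on opens and compacts (see, e.g., \cite[Proposition 1.62]{AFP}). There is no real obstacle here: the whole content is packaged in point (3) of Theorem \ref{thm:smooth_lambda_approx}, while the remaining steps are routine. One could equivalently argue via test functions $\varphi\in C_{c}(\Omega)$ by splitting $\varphi=\varphi^{+}-\varphi^{-}$ and applying lower semicontinuity to each sublevel set, but the compact/open formulation above is the most direct.
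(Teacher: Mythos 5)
Your proof is correct and follows essentially the same route as the paper: lower semicontinuity of the area functional on open sets from the $L^{1}$ convergence in point (1), combined with the mass convergence in point (3) to get the upper bound on compacts, and then the standard Portmanteau criterion. You have merely written out the details that the paper leaves implicit.
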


\begin{proof}
The lower semicontinuity on open sets is an immediate consequence of the fact that $u^\lambda_k \to u$ in $L^1(\Omega)$, thanks to point (1) of Theorem \ref{thm:smooth_lambda_approx}; while the upper semicontinuity on compact sets follows from the lower semicontinuity on the open sets and point (3) of Theorem \ref{thm:smooth_lambda_approx}.
\end{proof}

We conclude this section with an approximation result for the $\lambda$-pairings, which, as a byproduct, allows us to derive a different proof for the estimate \eqref{eq:pairing_estimate_lambda}, see Remark \ref{rem:alternate_proof_bound} below. In addition, we provide a useful alternative bound for the $\lambda$-pairing in our framework, a particular case of which was proved in \cite[Lemma 5.5]{scheven2016bv}.

\begin{theorem}\label{lemma:pairlambda-vs-area}
Let $F\in \DM^{\infty}(\Omega)$, $u\in BV(\Omega)$ and $\lambda:\Omega\to [0,1]$ be a Borel function. If $u^\lambda \in L^{1}(\Omega; |\div F|)$, then there exists a sequence $(u_j)_{j \in \N} \subset C^{\infty}(\Omega) \cap BV(\Omega) \cap L^\infty(\Omega)$ such that $u_j \to u$ in $BV(\Omega)$-strict and
\begin{equation} \label{eq:pairlambda_weak_conv}
(F \cdot \nabla u_j) \, \Leb{n} \weakto (F, Du)_{\lambda} \ \text{ in } \Mm(\Omega).
\end{equation}
If $u \in BV(\Omega) \cap L^\infty(\Omega)$, then $u F \in \DM^\infty(\Omega)$ and \eqref{eq:pairlambda_weak_conv} holds true for the approximating sequence $(u^\lambda_k)_{k \in \N}$ given by Theorem \ref{thm:smooth_lambda_approx}.

Finally, for all $a \ge \|F\|_{L^{\infty}(\Omega; \R^n)}$ we have
\begin{equation} \label{eq:pairlambda-vs-area}
|(F, Du)_{\lambda}| \le a\sqrt{1+|Du|^{2}} - \sqrt{a^2 -|F|^{2}}\, \Leb{n} \ \text{ on } \Omega.
\end{equation}
\end{theorem}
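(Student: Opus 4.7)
The plan is to split the proof into the three assertions and address them in the natural order: first the weak convergence \eqref{eq:pairlambda_weak_conv} in the bounded case (which is the core), then the reduction to the unbounded case by truncation, and finally the pointwise Cauchy--Schwarz inequality whose limit yields \eqref{eq:pairlambda-vs-area}.

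For the bounded case, I would fix $u\in BV(\Omega)\cap L^\infty(\Omega)$ and take $(u^\lambda_k)_k$ given by Theorem \ref{thm:smooth_lambda_approx}. Since Chen--Frid guarantees $uF\in\DM^\infty(\Omega)$ and the $u^\lambda_k$ are smooth, the classical Leibniz rule gives
\begin{equation*}
(F\cdot\nabla u^\lambda_k)\,\Leb{n}=\div(u^\lambda_k F)-u^\lambda_k\,\div F\quad\text{on }\Omega.
\end{equation*}
The plan is then to show that each term on the right converges. For the first, $u^\lambda_k\to u$ in $L^1(\Omega)$ together with $F\in L^\infty$ forces $u^\lambda_k F\to uF$ in $L^1(\Omega;\R^n)$, so $\div(u^\lambda_k F)\to\div(uF)$ in the sense of distributions; a uniform total-variation bound obtained from $|Du^\lambda_k|(\Omega)\to|Du|(\Omega)$ (Theorem \ref{thm:smooth_lambda_approx}(1)) then upgrades this to weak-$\ast$ convergence in $\Mm(\Omega)$. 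For the second term I would combine Theorem \ref{thm:smooth_lambda_approx}(2) with the inclusion $|\div F|\ll\Haus{n-1}$ to get $u^\lambda_k\to u^\lambda$ $|\div F|$-a.e., and use the uniform bound $|u^\lambda_k|\le(1+1/k)\|u\|_{L^\infty(\Omega)}$ from Theorem \ref{thm:smooth_lambda_approx}(4) to apply dominated convergence; this gives $u^\lambda_k\,\div F\to u^\lambda\,\div F$ in total variation. Subtracting yields \eqref{eq:pairlambda_weak_conv}.

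For a general $u\in BV(\Omega)$, I would reduce to the bounded case by truncation. Setting $u_N=T_N(u)$, the coarea formula gives $T_N(u)\to u$ in $BV$-strict as $N\to+\infty$. Moreover, Proposition \ref{prop:summability_lambda} together with \eqref{eq:TuLambda_estimate} provides a $|\div F|$-integrable majorant $\mul$ for $T_N(u)^\lambda$, and \eqref{eq:precise_repr_truncated_lambda2} gives $T_N(u)^\lambda\to u^\lambda$ $\Haus{n-1}$-a.e., hence $|\div F|$-a.e.; dominated convergence then yields $T_N(u)^\lambda\,\div F\to u^\lambda\,\div F$ in total variation, while $T_N(u)F\to uF$ in $L^1$ as before. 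Combining these two facts in the Leibniz identity \eqref{eq:Leibniz_lambda_gen} shows $(F,DT_N(u))_\lambda\weakto(F,Du)_\lambda$ in $\Mm(\Omega)$. A diagonal extraction of the sequences produced by the bounded case applied to each $T_N(u)$ yields the desired $u_j$.

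For the bound \eqref{eq:pairlambda-vs-area}, the key pointwise ingredient is the Cauchy--Schwarz inequality in $\R^{n+1}$ applied to $(\pm F,\sqrt{a^2-|F|^2})$ and $(\nabla v,1)$, which gives the elementary bound
\begin{equation*}
|F(x)\cdot\nabla v(x)|+\sqrt{a^2-|F(x)|^2}\le a\sqrt{1+|\nabla v(x)|^2}
\end{equation*}
valid $\Leb{n}$-a.e., for any smooth $v$ and any $a\ge\|F\|_{L^\infty(\Omega;\R^n)}$. Applying this to $v=u^\lambda_k$ (from the bounded case, first truncating if $u\notin L^\infty$) and integrating against $\varphi\in C_c(\Omega)$, $\varphi\ge0$, I would then take $\psi\in C_c(\Omega)$ with $|\psi|\le\varphi$ and pass to the limit: on the left, $\int\psi\,F\cdot\nabla u^\lambda_k\,dx\to\int\psi\,d(F,Du)_\lambda$ by \eqref{eq:pairlambda_weak_conv}; on the right, $\int\varphi\sqrt{1+|\nabla u^\lambda_k|^2}\,dx\to\int\varphi\,d\sqrt{1+|Du|^2}$ by Corollary \ref{cor:weak_area_conv}. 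Taking the supremum over $\psi$ with $|\psi|\le\varphi$ replaces the left-hand side with $\int\varphi\,d|(F,Du)_\lambda|$, and the arbitrariness of $\varphi\ge0$ then gives the measure inequality \eqref{eq:pairlambda-vs-area}.

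The delicate step is the first one: one must simultaneously ensure that $\div(u^\lambda_k F)$ converges weak-$\ast$ (not merely distributionally) and that $u^\lambda_k\,\div F\to u^\lambda\,\div F$ in total variation, so the extra care concerns uniform mass bounds and the use of the $\Haus{n-1}$-a.e. convergence of $u^\lambda_k$ from Theorem \ref{thm:smooth_lambda_approx}(2), which is precisely what distinguishes it from the classical Anzellotti--Giaquinta scheme. The diagonal extraction in the unbounded case and the limit passage in the final inequality are then essentially routine once the bounded, smooth case is secured.
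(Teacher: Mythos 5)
Your proposal is correct and follows essentially the same route as the paper's proof: the $\lambda$-approximation of Theorem \ref{thm:smooth_lambda_approx} combined with the Leibniz rule to get \eqref{eq:pairlambda_weak_conv} for bounded $u$, truncation plus the majorant $\mul$ and a diagonal extraction for general $u$, and the Cauchy--Schwarz inequality for $(F,\sqrt{a^2-|F|^2})$ passed to the limit via Corollary \ref{cor:weak_area_conv} for \eqref{eq:pairlambda-vs-area}. The only cosmetic differences are the order of truncation versus smoothing and your use of a supremum over signed test functions $\psi$ with $|\psi|\le\varphi$ in place of the paper's separate bounds on $(F,Du)_\lambda^{+}$ and $(F,Du)_\lambda^{-}$, which are equivalent.
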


\begin{proof}
Thanks to Theorem \ref{thm:pairlambda_Leibniz}, we know that $\div(u F), (F, Du)_{\lambda} \in \Mm(\Omega)$ and that the Leibniz rule \eqref{eq:Leibniz_lambda_gen} holds true. In addition, Proposition \ref{prop:summability_lambda} implies that $\mul \in L^{1}(\Omega; |\div F|)$, given that $u^\lambda \in L^{1}(\Omega; |\div F|)$.

Now, we exploit Theorem \ref{thm:smooth_lambda_approx} in order to construct the smooth approximation $T_{N}(u)^{\lambda}_{k}$ of the truncation $T_{N}(u)$, for all $N > 0$.
Thanks to \eqref{eq:Leibniz_lambda_gen}, for all $k \in \N$ and $N > 0$, we have
\begin{equation*}
\div(T_{N}(u)^{\lambda}_{k} F) = T_{N}(u)^{\lambda}_{k} \div F + (F \cdot \nabla T_{N}(u)^{\lambda}_{k}) \Leb{n} \ \text{ on } \Omega.
\end{equation*}
Hence, Theorem \ref{thm:smooth_lambda_approx} implies that 
\begin{align*}
\left | \div(T_{N}(u)^{\lambda}_{k} F) \right |(\Omega) & \le \left ( 1 + \frac{1}{k} \right ) N |\div F|(\Omega) + \|F\|_{L^{\infty}(\Omega; \R^n)} \sup_{k \in \N} \|\nabla T_{N}(u)^{\lambda}_{k}\|_{L^1(\Omega; \R^n)}.
\end{align*}
Therefore, we conclude that $(\div(T_{N}(u)^{\lambda}_{k} F))_{k \in \N}$ is bounded sequence in $\Mm(\Omega)$ for each fixed $N > 0$, and, since $\div(T_{N}(u)^{\lambda}_{k} F)$ converges to $\div(T_N(u) F)$ in the sense of distributions, we deduce that it also weakly converges in the sense of Radon measures. In addition, Theorem \ref{thm:smooth_lambda_approx} and the Lebesgue theorem with respect to the measure $|\div F|$ imply that
\begin{equation*}
T_{N}(u)^{\lambda}_{k} \, \div F  \weakto T_{N}(u)^{\lambda} \div F  \ \text{ in } \Mm(\Omega).
\end{equation*}
All in all, we see that
\begin{align}
(F \cdot \nabla T_{N}(u)^{\lambda}_{k}) \Leb{n} & = \div(T_{N}(u)^{\lambda}_{k} F) - T_{N}(u)^{\lambda}_{k} \div F \nonumber \\
& \weakto \div(T_N(u) F) - T_{N}(u)^{\lambda} \div F = (F, DT_N(u))_\lambda \ \text{ as } \ k \to + \infty , \label{eq:pairing_T_N_u_k_lambda_conv_1}
\end{align}
given that \eqref{eq:Leibniz_lambda_gen} holds true for $T_N(u)$ and $F$, so that we get
\begin{equation} \label{eq:Leibniz_lambda_TN}
\int_{\Omega} \varphi \, d (F, D T_N(u))_{\lambda} = - \int_{\Omega} T_N(u) F \cdot \nabla \varphi \, dx - \int_{\Omega} \varphi (T_N(u))^\lambda \, d \div F
\end{equation}
for all $\varphi \in C^1_c(\Omega)$. Since the family of measures $\big ((F, D T_N(u))_{\lambda} \big)_{N>0}$ is uniformly bounded in $N > 0$ by \eqref{eq:pairing_estimate_lambda} and the fact that $|D T_N(u)| \le |Du|$, we see that, up to extracting a subsequence, we may pass to the limit as $N\to + \infty$. Thanks to Proposition \ref{prop:mul}, we see that the truncation $T_N(u)$ for $N > 0$ satisfies
\begin{equation*}
(T_N(u))^{\lambda}(x) \to u^{\lambda}(x) \text{ as } N \to + \infty \text{ and } |(T_N(u))^{\lambda}(x)| \le \mul(x) \text{ for } |\div F|\text{-a.e. } x \in \Omega,
\end{equation*}
so that the right hand side of \eqref{eq:Leibniz_lambda_TN} clearly converges to
\begin{equation*}
- \int_{\Omega} u F \cdot \nabla \varphi \, dx - \int_{\Omega} \varphi u^\lambda \, d \div F = \int_{\Omega} \varphi \, d (F, Du)_{\lambda},
\end{equation*}
thanks to Lebesgue's Dominated Convergence Theorem with respect to the measure $|\div F|$. Hence, we deduce that
\begin{equation} \label{eq:weak_conv_lambda_pairing_trunc}
(F, DT_N(u))_{\lambda}\weakto (F, Du)_{\lambda} \ \text{ in } \Mm(\Omega) \text{ as } N \to + \infty.
\end{equation}
All in all, we see that there exists a sequence $(k_j)_{j \in \N} \subset \N$ such that $u_j = T_{j}(u)^\lambda_{k_j} \in C^{\infty}(\Omega) \cap BV(\Omega) \cap L^\infty(\Omega)$ satisfies \eqref{eq:pairlambda_weak_conv}. If in addition $u \in BV(\Omega) \cap L^\infty(\Omega)$, then clearly $u F \in \DM^\infty(\Omega)$ and it is not necessary to consider the truncation of $u$, and therefore the first part of the argument above still holds true for the sequence $(u^\lambda_k)_{k \in \N}$ given by Theorem \ref{thm:smooth_lambda_approx}.

Finally, we deal with \eqref{eq:pairlambda-vs-area}. Due to the homogeneity of the $\lambda$-pairing in the first component, without loss of generality we assume $\|F\|_{L^{\infty}(\Omega; \R^n)} \le1$ and $a = 1$. For all $\phi\in C_{c}(\Omega)$ with $\phi \ge 0$, thanks to \eqref{eq:pairing_T_N_u_k_lambda_conv_1} and Corollary \ref{cor:weak_area_conv} applied to $T_N(u)$ for $N > 0$, we have
\begin{align*}
\int_{\Omega} \phi\, d(F, DT_N(u))_{\lambda} + \int_{\Omega} \phi\sqrt{1-|F|^{2}}\, dx & = \lim_{k \to + \infty} \int_{\Omega} \phi (F \cdot \nabla T_{N}(u)^{\lambda}_{k} + \sqrt{1-|F|^{2}})\, dx\\
& \le \liminf_{k \to + \infty} \int_{\Omega} \phi \left |(F, \sqrt{1-|F|^{2}}) \cdot (\nabla T_{N}(u)^{\lambda}_{k}, 1) \right |\, dx \\
& \le \lim_{k \to + \infty} \int_\Omega \phi \, \sqrt{1+|\nabla T_{N}(u)^{\lambda}_{k}|^{2}}\, dx \\
& = \int_{\Omega} \phi \, d \sqrt{1+|DT_N(u)|^{2}} \le \int_{\Omega} \phi \, d \sqrt{1+|Du|^{2}}.
\end{align*}
We see that \eqref{eq:weak_conv_lambda_pairing_trunc} yields
\begin{align*}
\int_{\Omega} \phi\, d(F, D u)_{\lambda} = \lim_{N \to + \infty} \int_{\Omega} \phi\, d(F, DT_N(u))_{\lambda} \le \int_{\Omega} \phi \, d \sqrt{1+|Du|^{2}} - \int_{\Omega} \phi\sqrt{1-|F|^{2}}\, dx.
\end{align*}
Due to the fact that $\phi \ge 0$, this implies that
\begin{equation*}
(F, Du)_{\lambda}^+ \le \sqrt{1+|Du|^{2}} - \sqrt{1 -|F|^{2}}\Leb{n}  \ \text{ on } \Omega.
\end{equation*}
By an analogous computation, in which the term $\displaystyle \int_\Omega \phi \, \sqrt{1-|F|^{2}} \, dx$ is subtracted instead of added, and $\phi$ is replaced with $-\phi$, we obtain
\[
-\int_{\Omega} \phi\, d(F, Du)_{\lambda} + \int_\Omega \phi\sqrt{1-|F|^{2}}\, dx \le \int_{\Omega} \phi \, d \sqrt{1+|Du|^{2}}\,,
\]
which implies
\[
(F, Du)_{\lambda}^- \le \sqrt{1+|Du|^{2}} - \sqrt{1-|F|^{2}}\Leb{n}  \ \text{ on } \Omega.
\]
All in all, we obtain \eqref{eq:pairlambda-vs-area}.
\end{proof}

\begin{remark} \label{eq:lambda_pair_est_scaling}
The optimal value of $a$ in \eqref{eq:pairlambda-vs-area} is not necessarily $a = \|F\|_{L^{\infty}(\Omega; \R^n)}$. Indeed, let us consider $F(x) = (c, 0, \dots, 0)$, for some $c > 0$, and $u(x) = x_1$. Then \eqref{eq:pairlambda-vs-area} reduces to
\begin{equation*}
c \le a \sqrt{2} - \sqrt{a^2 - c^2}
\end{equation*}
for all $a \ge c$. However, it is plain to see that the minimum of the function $g(a) = a \sqrt{2} - \sqrt{a^2 - c^2}$ on $[c, + \infty)$ is attained at $a = c \sqrt{2}$, and we have $g(c \sqrt{2}) = c$, while $g(c) = c \sqrt{2}$.
\end{remark}

\begin{remark} \label{rem:alternate_proof_bound}
We notice that we can also prove the estimate on the total variation of the $\lambda$-pairings \eqref{eq:pairing_estimate_lambda} as a consequence of the $\lambda$-approximation \eqref{eq:pairlambda_weak_conv}. Indeed, it is not difficult to check that $u_j \to u$ in $BV(\Omega)$-strict, thanks to Theorem \ref{thm:smooth_lambda_approx} and the well-known properties of the truncation operator. Given that the strict convergence implies the weak convergence $|\nabla u_j| \Leb{n} \weakto |Du|$, for all $\phi\in C_{c}(\Omega)$ we get
\begin{align*}
\int_{\Omega} \phi \, d (F, Du)_\lambda & = \lim_{j \to + \infty} \int_{\Omega} \phi \, (F \cdot \nabla u_j) \, dx \\
& \le \|F\|_{L^\infty(\Omega; \R^n)} \lim_{j \to + \infty} \int_{\Omega} |\phi| |\nabla u_j| \, dx = \|F\|_{L^\infty(\Omega; \R^n)} \int_\Omega |\phi| \, d |Du|.
\end{align*}
Then, this easily implies $|(F, Du)_\lambda | \le \|F\|_{L^\infty(\Omega; \R^n)} |D u|$ on $\Omega$.
\end{remark}

\section{Perimeter bounds and admissible measures}\label{sec:admissible_measures}

In the following we shall use the notation $\MH(\Omega)$ introduced in \eqref{def:MH}.

We start by giving a simple characterization of measures which are the distributional divergence of an essentially bounded vector field.

\begin{lemma} \label{necessary_cond} 
Let $\mu \in \mathcal{M}(\Omega)$ and assume that there exists $F \in L^{\infty}(\Omega; \R^n)$ such that $\div F = \mu$. Then $\mu$ enjoys the following properties:
\begin{enumerate}
\item $\mu \in \MH(\Omega)$, 
\item $\max\{|\mu(E^{1})|, |\mu(E^{1} \cup \redb E)|\} \le \|F\|_{L^{\infty}(\Omega; \R^n)} \Per(E)$ for all sets $E \Subset \Omega$ of finite perimeter in $\R^n$,
\item if in addition $\Omega$ is weakly regular, then we have $$\max\{|\mu(E^{1} \cap \Omega)|, |\mu((E^{1} \cup \redb E) \cap \Omega)|\} \le \|F\|_{L^{\infty}(\Omega; \R^n)} \Per(E)$$ for all sets $E\subset \Omega$ of finite perimeter in $\R^n$.
\end{enumerate}
\end{lemma}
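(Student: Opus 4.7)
The plan is to derive all three conclusions as direct consequences of the refined Gauss--Green formulas given by Theorem \ref{thm:GG_app}, combined with the absolute continuity of $|\div F|$ with respect to $\Haus{n-1}$.

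For point (1), I would first observe that the hypothesis $F \in L^{\infty}(\Omega; \R^n)$ together with $\div F = \mu \in \mathcal{M}(\Omega)$ means that $F$ belongs to $\DM^{\infty}(\Omega)$. The property $|\div F| \ll \Haus{n-1}$, recalled in Section \ref{sec:div_meas} and originally due to Chen--Frid and \v{S}ilhav\'y, then immediately gives $\mu \in \MH(\Omega)$.

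For point (2), I would fix $E \Subset \Omega$ of finite perimeter in $\R^n$. Since $E \Subset \Omega$ implies $E^{1} \subset \overline{E} \subset \Omega$, the intersection with $\Omega$ is inessential, so formulas \eqref{eq:GG_1} and \eqref{eq:GG_2} from Theorem \ref{thm:GG_app} apply with $E^{1} \cap \Omega = E^{1}$. From \eqref{eq:GG_1} I then obtain
\[
|\mu(E^{1})| = \left| \int_{\redb E} {\rm Tr}^{i}(F,\redb E)\, d\Haus{n-1} \right| \le \|F\|_{L^{\infty}(E;\R^n)}\, \Per(E) \le \|F\|_{L^{\infty}(\Omega;\R^n)}\, \Per(E),
\]
and the analogous bound for $|\mu(E^{1} \cup \redb E)|$ follows from \eqref{eq:GG_2} with the exterior trace ${\rm Tr}^{e}$, which in the compactly-contained case is controlled by $\|F\|_{L^{\infty}(\Omega \setminus E; \R^n)}$ and hence by $\|F\|_{L^{\infty}(\Omega;\R^n)}$.

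For point (3), the same argument works, this time invoking the weakly-regular case of Theorem \ref{thm:GG_app}: the formulas \eqref{eq:GG_1} and \eqref{eq:GG_2} already carry the sets $E^{1} \cap \Omega$ and $(E^{1} \cup \redb E) \cap \Omega$ on the left-hand side, while the norm bounds on both interior and exterior normal traces are by $\|F\|_{L^{\infty}(\Omega;\R^n)}$ directly. Thus the lemma is essentially a repackaging of Theorem \ref{thm:GG_app}, and there is no substantial obstacle: the only subtlety is ensuring that $\mu$ can be meaningfully evaluated on $E^{1}$ and $E^{1} \cup \redb E$, which is precisely what point (1) provides since these sets are Borel.
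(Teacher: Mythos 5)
Your proposal is correct and follows essentially the same route as the paper's own proof: point (1) via the absolute continuity $|\div F| \ll \Haus{n-1}$ for $\DM^\infty$ fields, and points (2)--(3) as direct applications of the Gauss--Green formulas \eqref{eq:GG_1} and \eqref{eq:GG_2} of Theorem \ref{thm:GG_app}. No gaps.
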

\begin{proof} 
It is clear that $F \in \DM^{\infty}(\Omega)$, and so $\mu = \div F \in \MH(\Omega)$ by \cite[Theorem 3.2]{Silhavy1}. Then, (2) and (3) are easy consequences of the Gauss--Green formulas, see Theorem \ref{thm:GG_app}. Indeed, by \eqref{eq:GG_1} and \eqref{eq:GG_2} we get
\begin{equation*} 
|\mu(E^1 \cap \Omega)| = |\div F(E^{1} \cap \Omega)| \le \|F\|_{L^{\infty}(E; \R^{n})} \Per(E) \le \|F\|_{L^{\infty}(\Omega; \R^n)} \Per(E), 
\end{equation*}
and
\begin{equation*}
|\mu((E^1 \cup \redb E) \cap \Omega)|  = |\div F((E^{1} \cup \redb E) \cap \Omega)| \le \|F\|_{L^{\infty}(\Omega; \R^n)} \Per(E).
\end{equation*}
\end{proof}

In analogy with the bounds obtained in Lemma \ref{necessary_cond} for measure which are the divergence of a $\DM^\infty$ vector field, we define a general class of measures satisfying similar bounds with respect to the perimeter.

\begin{definition}\label{def:muLper}
Given $\mu \in \Mm(\Omega)$ and $L > 0$, we say that $\mu$ belongs to $\PB_{L}(\Omega)$ if 
\begin{equation} \label{eq:subcritical_cond}
|\mu(E^{1} \cap \Omega)| \le L\, \Per(E)\, \quad \text{ for all measurable sets } \, E\subset \Omega.
\end{equation}
We also set $\PB(\Omega) := \bigcup_{L > 0} \PB_{L}(\Omega)$ and, if $\mu \in \PB(\Omega)$, we say that $\mu$ satisfies a {\em perimeter bound condition}.
\end{definition}

\begin{remark} \label{necessary_cond_ball} 
If $\mu\in \PB(\Omega)$, then there exists $L > 0$ such that $|\mu(B_r(x))| \le L n \omega_{n} r^{n - 1}$ for all $x \in \Omega$ and $r > 0$ small enough so that $B_r(x) \subset \Omega$. In particular, by \cite[Theorem 4.2 and Corollary 4.3]{Phuc_Torres} we have $\mu \in \MH(\Omega)$.
\end{remark}

It is interesting to notice that, under some mild regularity assumptions on $\Omega$, the perimeter bound condition \eqref{eq:subcritical_cond} needs only to be satisfied on open sets with smooth boundary compactly contained in $\Omega$.

\begin{lemma} \label{lem:open_sets_PB}
Let $\Omega$ be weakly regular, $\mu \in \Mm(\Omega)$ and $L > 0$. Then we have $\mu \in \PB_{L}(\Omega)$ if and only if
\begin{equation} \label{eq:subcritical_cond_smooth}
|\mu(A)| \le L\, \Per(A)\,\quad \text{ for all open sets with smooth boundary } \, A \Subset \Omega.
\end{equation}
\end{lemma}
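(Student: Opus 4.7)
The forward direction is immediate: for any smooth open $A\Subset\Omega$ one has $A^1=A$ (since points of the open set $A$ have density $1$, while smooth boundary points have density $1/2$), so $\mu\in\PB_L(\Omega)$ gives $|\mu(A)|=|\mu(A^1\cap\Omega)|\le L\,\Per(A)$. For the converse, applying \eqref{eq:subcritical_cond_smooth} to balls $B_r(x)\Subset\Omega$ yields $|\mu(B_r(x))|\le Ln\omega_nr^{n-1}$, so by \cite[Theorem 4.2 and Corollary 4.3]{Phuc_Torres} we have $\mu\in\MH(\Omega)$ and in particular $|\mu|\ll\Haus{n-1}$. We may assume $\Per(E)<+\infty$ and proceed in two steps.

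\emph{Step 1: reduction to $E\Subset\Omega$.} By weak regularity of $\Omega$, take a smooth exhaustion $\Omega_k\Subset\Omega_{k+1}\Subset\Omega$, $\bigcup_k\Omega_k=\Omega$, with $\Per(\Omega_k)\to\Per(\Omega)$ (as recalled after Theorem~\ref{thm:GG_app}), and set $E_k:=E\cap\Omega_k\Subset\Omega$. A direct inspection of Lebesgue densities shows $E_k^1=E^1\cap\Omega_k$, hence $\mu(E_k^1)=\mu(E^1\cap\Omega_k)\to\mu(E^1\cap\Omega)$ by monotone convergence applied to $\mu^\pm$. Submodularity of the perimeter yields
\[
\Per(E_k)\le\Per(E)+\Per(\Omega_k)-\Per(E\cup\Omega_k),
\]
and since $E\cup\Omega_k\to\Omega$ in $L^1$, the lower semicontinuity of the perimeter gives $\liminf_k\Per(E\cup\Omega_k)\ge\Per(\Omega)$; combined with $\Per(\Omega_k)\to\Per(\Omega)$, this entails $\limsup_k\Per(E_k)\le\Per(E)$. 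Once the inequality is proved for each $E_k$, passing to the limit completes the reduction.

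\emph{Step 2: the case $E\Subset\Omega$.} Set $u_\eps:=\chi_E\ast\rho_\eps$ for a standard mollifier $\rho_\eps$; then $u_\eps\in C^\infty(\R^n)$, and $\mathrm{supp}\,u_\eps\Subset\Omega$ for every $\eps<\dist(\overline E,\partial\Omega)$. The convolution estimate together with $u_\eps\to\chi_E$ in $L^1$ and the lower semicontinuity of the perimeter give $\int_{\R^n}|\nabla u_\eps|\,dx\to\Per(E)$, so, along $\eps_k=1/k$, the coarea formula yields
\[
\int_0^1\Per(\{u_{\eps_k}>t\})\,dt\to\Per(E).
\]
For every $t\in(0,1)\setminus\{1/2\}$, the pointwise convergence $u_{\eps_k}(x)\to\chi_E^*(x)$ for $\Haus{n-1}$-a.e.\ $x$ (cf.\ \eqref{eq:u_star_moll_point_lim}) forces $\chi_{\{u_{\eps_k}>t\}}\to\chi_{E^1}$ (if $t>1/2$) or $\chi_{E^1\cup\redb E}$ (if $t<1/2$), both $\Haus{n-1}$-a.e.\ and in $L^1$; the lower semicontinuity of the perimeter then gives $\liminf_k\Per(\{u_{\eps_k}>t\})\ge\Per(E)$. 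Applying Fatou's lemma to the coarea identity above forces the equality $\liminf_k\Per(\{u_{\eps_k}>t\})=\Per(E)$ for $\Leb{1}$-a.e. $t$.

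We now pick $t^*\in(1/2,1)$ belonging simultaneously to this full-measure set and to the countable intersection of the Sard sets of the functions $u_{\eps_k}$, $k\in\N$. Up to a subsequence, $A_k:=\{u_{\eps_k}>t^*\}\Subset\Omega$ are then open sets with smooth boundary such that $\Per(A_k)\to\Per(E)$ and $\chi_{A_k}\to\chi_{E^1}$ $\Haus{n-1}$-a.e., hence $|\mu|$-a.e. Dominated convergence gives $\mu(A_k)\to\mu(E^1)$, and \eqref{eq:subcritical_cond_smooth} yields $|\mu(E^1)|=\lim_k|\mu(A_k)|\le L\,\Per(E)$, as required. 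The main obstacle I expect is precisely the extraction of a single level $t^*$ for which the approximating smooth sets $A_k$ are simultaneously regular and asymptotically perimeter-optimal; this is resolved by the coarea--Fatou forcing argument combined with a countable application of Sard's theorem.
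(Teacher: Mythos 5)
Your proof is correct and follows the same strategy as the paper's: first observe $\mu\in\MH(\Omega)$ from the ball estimate, then reduce to $E\Subset\Omega$ via a smooth exhaustion $\Omega_k$ with $\Per(\Omega_k)\to\Per(\Omega)$, the identity $E_k^1=E^1\cap\Omega_k$, and submodularity of the perimeter. The only divergence is in the compactly contained case, where the paper simply invokes the one-sided approximation theorem of Comi--Torres (\cite[Theorem 3.1]{Comi_Torres}), whereas you reprove it inline via mollification, the coarea formula, Fatou's lemma and Sard's theorem; that argument is sound — the choice of a level $t^*>\tfrac12$ correctly selects $\chi_{E^1}$ rather than $\chi_{E^1\cup\redb E}$ as the $\Haus{n-1}$-a.e.\ limit, and $|\mu|\ll\Haus{n-1}$ is exactly what justifies passing to the limit in $\mu(A_k)$ — so the net effect is a self-contained version of the same proof.
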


\begin{proof}
Clearly, any $\mu \in \PB_{L}(\Omega)$ satisfies \eqref{eq:subcritical_cond_smooth}. As for the opposite implication, we start by noticing that, if $\mu$ satisfies \eqref{eq:subcritical_cond_smooth}, then $\mu \in \MH(\Omega)$, thanks to Remark \ref{necessary_cond_ball}. By \cite[Theorem 1.1]{MR3314116}, for all $\eps > 0$ there exists an increasing family of open sets with smooth boundary $(\Omega_\eps)_{\eps > 0}$ such that $\Omega_\eps \Subset \Omega$, $\bigcup_{\eps > 0} \Omega_\eps = \Omega$ and $\lim_{\eps \to 0^+} P(\Omega_\eps) = P(\Omega)$. 
We consider now $E \subset \Omega$ be a set of finite perimeter, and we set $E_\eps = E \cap \Omega_\eps$. Due to the smoothness of $\Omega_\eps$, we obtain that $E_\eps^1 = E^1 \cap \Omega_\eps$. Since $\mu \in \MH(\Omega)$, we can apply \cite[Theorem 3.1]{Comi_Torres} to conclude that there exists a smooth approximation $(E_{\eps, k})_{k \in \N}$ of the set of finite perimeter $E_\eps$ such that
\begin{equation*}
\lim_{k \to + \infty} \mu(E_{\eps, k}) = \mu(E_\eps^1) = \mu(E^1 \cap \Omega_\eps) \ \text{ and } \ \lim_{k \to + \infty} P(E_{\eps, k}) = P(E \cap \Omega_\eps).
\end{equation*}
In particular, the construction performed in \cite{Comi_Torres} implies that $E_{\eps, k} \Subset \Omega$, given that $E_\eps \Subset \Omega$. Hence, we can apply \eqref{eq:subcritical_cond_smooth} to $E_{\eps, k}$, and pass to the limit as $k \to + \infty$ in order to get
\begin{equation*}
|\mu(E^1 \cap \Omega_\eps)| = \lim_{k \to + \infty} \mu(E_{\eps, k}) \le L \lim_{k \to + \infty} P(E_{\eps, k}) = L P(E \cap \Omega_\eps).
\end{equation*}
Now, we pass to the limit as $\eps \to 0^+$ and we employ the submodularity of the perimeter \cite[Proposition 3.38]{AFP} to obtain
\begin{align*}
|\mu(E^1 \cap \Omega)| & = \lim_{\eps \to 0^+} |\mu(E^1 \cap \Omega_\eps)| \le L \limsup_{\eps \to 0^+} \left (P(E) + P(\Omega_\eps) - P(E \cup \Omega_\eps) \right ) \\
& = L \left ( P(E) + P(\Omega) - \liminf_{\eps \to 0^+} P(E \cup \Omega_\eps) \right ) \le L \left ( P(E) + P(\Omega) - P(\Omega) \right ) = L P(E).
\end{align*}
Thus, we prove \eqref{eq:subcritical_cond}.
\end{proof}

We state now a basic result concerning nonnegative measures in the dual of $BV$.

\begin{lemma}\label{lem:pre_adm}
Let $\nu \in BV(\Omega)^{*}$ be nonnegative, and let $C > 0$ be such that
\begin{equation*}
\left | \int_\Omega u^{*} \, d \nu \right | \le C\, \|u\|_{BV(\Omega)}  \quad \text{ for all } u\in BV(\Omega) \cap L^\infty(\Omega).
\end{equation*}
Then for any $u\in BV(\Omega)$ we have $u^{\pm}\in L^{1}(\Omega;\nu)$, as well as $u^*, u^\lambda, \mul \in L^{1}(\Omega; \nu)$ for any Borel function $\lambda : \Omega \to [0, 1]$. In particular, we obtain
\begin{equation} \label{eq:ext_BV_dual}
\int_\Omega |u^{\lambda}| \, d \nu \le C\, \|u\|_{BV(\Omega)}  \quad \text{ for all } u\in BV(\Omega).
\end{equation}
\end{lemma}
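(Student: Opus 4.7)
The plan is to establish \eqref{eq:ext_BV_dual} first for every Borel $\lambda:\Omega\to[0,1]$ and every $u\in BV(\Omega)$, since all the membership assertions then follow by specializing $\lambda$ and invoking the pointwise bound in Proposition \ref{prop:mul}. Note that the hypothesis implicitly forces $\nu\in\MH(\Omega)$ (this is built into the definition of $\mu\in BV(\Omega)^*$ recalled above), so the representatives $u^*,u^\pm,u^\lambda,\mul$ are all $\nu$-a.e.~defined and $\nu$-measurable.

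I would first handle the case $u\in BV(\Omega)\cap L^\infty(\Omega)$ by means of the $\lambda$-approximation of Theorem \ref{thm:smooth_lambda_approx}: this yields a sequence $(u_k^\lambda)\subset C^\infty(\Omega)\cap BV(\Omega)\cap L^\infty(\Omega)$ with $u_k^\lambda \to u$ in $BV(\Omega)$-strict (so $\|u_k^\lambda\|_{BV(\Omega)}\to \|u\|_{BV(\Omega)}$) and $u_k^\lambda(x) \to u^\lambda(x)$ for $\Haus{n-1}$-a.e.\ $x\in\Omega$. Because $u_k^\lambda$ is continuous, so is $|u_k^\lambda|\in BV(\Omega)\cap L^\infty(\Omega)$, and hence its precise representative coincides with itself pointwise. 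Testing the hypothesis against $|u_k^\lambda|$ and using the nonnegativity of $\nu$,
\[
\int_\Omega |u_k^\lambda|\,d\nu = \Bigl|\int_\Omega (|u_k^\lambda|)^{*}\,d\nu\Bigr| \le C\,\bigl\||u_k^\lambda|\bigr\|_{BV(\Omega)} \le C\,\|u_k^\lambda\|_{BV(\Omega)}.
\]
Since $\nu\in\MH(\Omega)$, the $\Haus{n-1}$-a.e.\ convergence is also $\nu$-a.e., so Fatou's lemma together with the convergence of norms gives $\int_\Omega |u^\lambda|\,d\nu \le C\,\|u\|_{BV(\Omega)}$.

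For a general $u\in BV(\Omega)$, I would run the previous step on each truncation $T_N(u)\in BV(\Omega)\cap L^\infty(\Omega)$, exploiting $\|T_N(u)\|_{BV(\Omega)}\le \|u\|_{BV(\Omega)}$, to obtain the uniform bound $\int_\Omega |T_N(u)^\lambda|\,d\nu \le C\,\|u\|_{BV(\Omega)}$. By Proposition \ref{prop:mul}, $T_N(u)^\lambda(x) \to u^\lambda(x)$ for $\Haus{n-1}$-a.e.\ $x\in\Omega$ as $N\to+\infty$, and a second application of Fatou's lemma yields \eqref{eq:ext_BV_dual}.

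The remaining claims then follow at once: choosing $\lambda\equiv 1$, $\lambda\equiv 0$ and $\lambda\equiv 1/2$ in \eqref{eq:ext_BV_dual} shows that $u^+, u^-, u^*$ all belong to $L^1(\Omega;\nu)$, while the integrability of $\mul$ is a consequence of the pointwise estimate $\mul \le \lambda|u^+|+(1-\lambda)|u^-| \le |u^+|+|u^-|$ recorded in \eqref{eq:upmlambda}. The one delicate point is the very first step: one must make sure that $|u_k^\lambda|$ is an \emph{admissible} test function producing the sharp constant $C$, which is precisely why I use continuity of the smooth $\lambda$-approximant (so the precise representative ambiguity disappears) together with $BV$-strict convergence to transfer the norm bound to the limit.
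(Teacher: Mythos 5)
Your proof is correct and follows essentially the same route as the paper's: both rest on the $\lambda$-approximation of Theorem \ref{thm:smooth_lambda_approx}, truncation to reduce to bounded continuous test functions (whose precise representative is unambiguous), the strict convergence of the $BV$-norms, and a limit theorem to pass the bound to $u^\lambda$. The only cosmetic differences are that you run the approximation for a general $\lambda$ and use Fatou twice, whereas the paper approximates only $u^{\pm}$ (i.e.\ $\lambda\equiv 0,1$) with dominated plus monotone convergence and then deduces the general case; your ordering has the small advantage of delivering the constant $C$ in \eqref{eq:ext_BV_dual} directly.
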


\begin{proof}
By Theorem \ref{thm:smooth_lambda_approx} there exist two sequences $(u_{k}^{0})$ and $(u_{k}^{1})$ (corresponding to the choices $\lambda \equiv 0$ and $\lambda \equiv 1$) of smooth functions in $BV(\Omega)$ converging to $u$ in $BV(\Omega)$-strict, and respectively to $u^{-}$ and $u^{+}$ $\nu$-almost everywhere on $\Omega$. With a little abuse of notation, we shall write $u_{k}^{-} = u_{k}^{0}$ and $u_{k}^{+} = u_{k}^{1}$, from this point onwards. For any fixed $N\in \N$, the truncation operator $T_N$ is Lipschitz, so that the sequences of truncations $\big ( T_{N}(|u^{\pm}_{k}|) \big )$ converge to $T_{N}(|u^{\pm}|)$ $\nu$-almost everywhere on $\Omega$, as $k\to\infty$. By Lebesgue's Dominated Convergence Theorem we obtain
\begin{align*}
\int_{\Omega}T_{N}(|u^{\pm}|)\, d\nu &= \lim_{k\to\infty} \int_{\Omega} T_{N}(|u^{\pm}_{k}|)\, d\nu \le C\limsup_{k\to\infty} \|T_{N}(|u_{k}|)\|_{BV(\Omega)} \\
&\le C\limsup_{k\to\infty} \|\,|u_{k}|\,\|_{BV(\Omega)} \le C\limsup_{k\to\infty} \|u_{k}\|_{BV(\Omega)} = C \|u\|_{BV(\Omega)}\,.
\end{align*}
By monotone convergence we can take the limit as $N\to\infty$ and conclude
\begin{equation} \label{eq:u_pm_nu_summ}
\int_{\Omega}|u^{\pm}|\, d\nu \le C \|u\|_{BV(\Omega)}\,.
\end{equation}
This easily implies that $u^*, u^\lambda, \mul \in L^1(\Omega; \nu)$ for any Borel function $\lambda : \Omega \to [0, 1]$, and that \eqref{eq:ext_BV_dual} holds true.
\end{proof}

We introduce a special class of measures related to the dual of $BV$, which we call admissible in view of their role in the subsequent paper \cite{LeoComi}.

\begin{definition}\label{def:muadmissible}
We say that $\mu\in \MH(\Omega)$ is \emph{admissible} if $|\mu| \in BV(\Omega)^{*}$.
\end{definition}

\begin{lemma}\label{lem:2a2b-bis}
If $\mu \in \MH(\Omega)$ is admissible, then, for all $u\in BV(\Omega)$ and all Borel functions $\lambda : \Omega \to [0, 1]$, we have $u^\lambda, \mul \in L^{1}(\Omega; |\mu|)$. In particular, there exists a constant $C>0$ such that 
\begin{equation} \label{eq:admissibility_def}
\int_\Omega |u^{\lambda}| \, d|\mu| \le C\, \|u\|_{BV(\Omega)}  \quad \text{ for all } u\in BV(\Omega) \text{ and all Borel functions } \lambda : \Omega \to [0, 1].
\end{equation}
\end{lemma}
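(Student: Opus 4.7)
The proof will be a direct corollary of Lemma \ref{lem:pre_adm}, so the plan is essentially to verify that its hypotheses are satisfied by $\nu = |\mu|$ and to read off the conclusion.

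First I would observe that, since $\mu \in \MH(\Omega)$, its total variation $|\mu|$ also belongs to $\MH(\Omega)$: indeed, the definition \eqref{def:MH} depends only on $|\mu|$, and $||\mu|| = |\mu|$. By the admissibility assumption, $|\mu| \in BV(\Omega)^{*}$, which by the notational convention introduced just before \eqref{def:T_mu_lambda} means precisely that there exists a constant $C>0$ such that
\[
\left| \int_\Omega u^{*}\, d|\mu| \right| \le C\, \|u\|_{BV(\Omega)} \quad \text{ for all } u \in BV(\Omega) \cap L^\infty(\Omega).
\]
Thus $\nu := |\mu|$ is a nonnegative measure satisfying the hypotheses of Lemma \ref{lem:pre_adm}.

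Applying Lemma \ref{lem:pre_adm} directly, I obtain that for every $u \in BV(\Omega)$ the representatives $u^{\pm}$ (and hence $u^{*}$, $u^{\lambda}$) belong to $L^{1}(\Omega; |\mu|)$, together with the bound
\[
\int_{\Omega} |u^{\lambda}|\, d|\mu| \le C\, \|u\|_{BV(\Omega)},
\]
which is exactly \eqref{eq:admissibility_def}. The summability of the majorant $M[u,\lambda]$ then follows at once from its definition \eqref{eq:upmlambda}, since
\[
\mul \le \lambda |u^{+}| + (1-\lambda)|u^{-}| \le |u^{+}| + |u^{-}|
\]
$\Haus{n-1}$-almost everywhere on $\Omega$, and the right-hand side is in $L^{1}(\Omega; |\mu|)$ by the above. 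There is no genuine obstacle here: all the real work has already been done in Lemma \ref{lem:pre_adm} (which in turn relied on the $\lambda$-approximation Theorem \ref{thm:smooth_lambda_approx}), and the present statement is merely the specialization of that lemma to $\nu = |\mu|$, with the admissibility condition serving exactly as the bridge that lets one invoke it.
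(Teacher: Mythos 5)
Your proof is correct and follows exactly the paper's argument: the paper's own proof consists of the single observation that $|\mu|\in BV(\Omega)^{*}$ allows one to apply Lemma \ref{lem:pre_adm} with $\nu=|\mu|$. Your additional remarks (that $|\mu|\in\MH(\Omega)$, and the bound $\mul\le|u^{+}|+|u^{-}|$ from the first branch of the minimum in \eqref{eq:upmlambda}) are accurate, though the summability of $\mul$ is already part of the conclusion of Lemma \ref{lem:pre_adm}.
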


\begin{proof}
Since $|\mu| \in BV(\Omega)^*$, then Lemma \ref{lem:pre_adm} immediately yields the conclusion.
\end{proof}

\begin{remark} \label{rem:div_admissible_Leibniz}
We notice that, thanks to Lemma \ref{lem:2a2b-bis}, if the measure $\div F$ is admissible in Theorems \ref{thm:pairlambda_Leibniz}, \ref{thm:GG_boundary_domain_u} and \ref{lemma:pairlambda-vs-area}, then we do not need to require $u^\lambda \in L^{1}(\Omega; |\div F|)$. In other words, if $\div F$ is admissible, then the Leibniz rule and the integration by parts formula hold true for all $BV$ functions.
\end{remark}

\begin{remark} \label{rem:extremal_vs_admissible}
We notice that, if $\mu \in \MH(\Omega)$ is admissible, then we have also $\mu \in BV(\Omega)^{*}$. Indeed, due to Lemma \ref{lem:2a2b-bis}, we know that $u^* \in L^1(\Omega, |\mu|)$ for all $u \in BV(\Omega)$, and, by exploiting \eqref{eq:admissibility_def} for $\lambda = \frac{1}{2}$, we see that
\begin{equation*}
\left | \int_{\Omega} u^{*} \, d \mu \right | \le \int_{\Omega} |u^{*}| \, d |\mu| \le C \big\| u \big\|_{BV(\Omega)}.
\end{equation*}
We underline that, in general, the converse is not true, as shown in Remark \ref{rem:counterexample}. 
\end{remark}

In the following proposition, we explore the relations between the divergence-measure fields, the perimeter bound, and the admissibility condition.

\begin{proposition} \label{prop:adm_PB_div}
The following hold true:
\begin{enumerate}
\item If $\Omega$ is weakly regular and $F \in \DM^{\infty}(\Omega)$, then $\div F \in \PB_L(\Omega)$ for $L = \|F\|_{L^\infty(\Omega; \R^n)}$; if in addition $\Omega$ has Lipschitz boundary, then $\div F \in BV(\Omega)^*$.

\item If $\Omega$ is a bounded open set with Lipschitz boundary and $\mu \in \PB(\Omega)$, then there exists $F \in \DM^{\infty}(\Omega)$ such that $\div F = \mu$ on $\Omega$, there exists $\widetilde{L} > 0$ such that
\begin{equation} \label{eq:L_non_extreme}
|\mu(U \cap \Omega)| \le \widetilde{L} \, \Per(U) \quad \text{ for all bounded open sets } U\subset \R^n \text{ with smooth boundary,}
\end{equation} 
we have $\mu \in BV(\Omega)^*$, with its action given for all $u \in BV(\Omega) \cap L^\infty(\Omega)$ by
\begin{equation} \label{eq:action_functional_pairing}
\T_{\mu} (u) = \int_\Omega u^* \, d \mu = - (F, Du)_*(\Omega) - \int_{\partial \Omega} {\rm Tr}_{\partial \Omega}(u) {\rm Tr}^i(F, \partial \Omega) \, d \Haus{n-1}\,,
\end{equation}
and, if $\mu$ is admissible, formula \eqref{eq:action_functional_pairing} holds for all $u \in BV(\Omega)$.
 If $n = 1$, \eqref{eq:L_non_extreme} holds for all finite unions of open intervals.

\item If $n \ge 2$, $|\Omega| < + \infty$ and $\mu \in \MH(\Omega)$ is an admissible measure, then $\mu \in \PB(\Omega)$.

\item If $\mu \in \MH(\Omega)$ is an admissible measure, then there exists $F, \overline{F} \in \DM^{\infty}(\Omega)$ such that $\mu = \div F$ and $|\mu| = \div \overline{F}$ on $\Omega$.

\item If $n = 1$ and $\mu \in \Mm(\Omega)$, then $\mu$ is admissible and there exists $f, \overline{f} \in BV(\Omega)$ such that $\mu = D f$ and $|\mu| = D \overline{f}$ on $\Omega$; if in addition $|\R \setminus \Omega| > 0$, then $\Mm(\Omega) = \PB(\Omega)$.
\end{enumerate}
\end{proposition}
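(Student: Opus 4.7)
The plan is to prove the five items sequentially, exploiting their natural dependency. For (1), I would invoke Theorem \ref{thm:GG_app}: the formula \eqref{eq:GG_1} together with the trace estimate $\|{\rm Tr}^i(F,\redb E)\|_\infty\le \|F\|_{L^\infty(\Omega;\R^n)}$ yields the $\PB_L$-property immediately. When $\Omega$ is additionally Lipschitz, Theorem \ref{thm:GG_boundary_domain_u} applied with $\lambda\equiv 1/2$, combined with \eqref{eq:pairing_estimate_lambda} and the continuity of the boundary trace on $BV(\Omega)$, gives $|\T_{\div F}(u)|\le C\|F\|_{L^\infty(\Omega;\R^n)}\|u\|_{BV(\Omega)}$ for $u\in BV(\Omega)\cap L^\infty(\Omega)$, so that $\div F\in BV(\Omega)^*$.

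Item (2) is where I would invoke the Phuc--Torres representation theorem. By Lemma \ref{lem:open_sets_PB}, the hypothesis $\mu\in \PB(\Omega)$ is equivalent on the Lipschitz (hence weakly regular) domain $\Omega$ to the perimeter bound \eqref{eq:subcritical_cond_smooth} on compactly contained smooth sets, which is precisely the starting hypothesis in \cite[Theorem 8.2]{Phuc_Torres}; this produces the field $F\in \DM^\infty(\Omega)$ with $\div F = \mu$, and \eqref{eq:L_non_extreme} is the equivalent formulation involving open smooth sets in $\R^n$. The action formula \eqref{eq:action_functional_pairing} then follows from Theorem \ref{thm:GG_boundary_domain_u} applied to $F$ with $\lambda\equiv 1/2$: for $u\in BV(\Omega)\cap L^\infty(\Omega)$ the required summability $u^* \in L^1(\Omega;|\mu|)$ is trivial, and for admissible $\mu$ it extends to all $u\in BV(\Omega)$ by Lemma \ref{lem:2a2b-bis}.

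For (3), my approach is to test admissibility on characteristic functions: for any $E\subset\Omega$ with $P(E)<+\infty$, finiteness of $\Omega$ gives $\chi_E\in BV(\Omega)$, and by \eqref{eq:repr_chi_E} the $\lambda\equiv 0$ representative of $\chi_E$ equals $\chi_{E^1}$ $\Haus{n-1}$-a.e. Substituting into \eqref{eq:admissibility_def} with $\lambda\equiv 0$ and combining with the isoperimetric estimate \eqref{eq:isoper_omega} (which is where $n\ge 2$ enters) gives $|\mu(E^1\cap\Omega)|\le |\mu|(E^1\cap\Omega)\le C\|\chi_E\|_{BV(\Omega)}\le C'P(E)$, i.e., $\mu\in\PB(\Omega)$. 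For (4), I plan to reduce to (2) and (3): when $\Omega$ is bounded, enclose it in a large ball $B$ with smooth boundary and extend $\mu$ and $|\mu|$ by zero to $B$; since $\|v\|_{BV(\Omega)}\le \|v\|_{BV(B)}$ for restrictions of $v\in BV(B)$, the extensions remain admissible on $B$, hence in $\PB(B)$ by (3), and (2) produces $F,\overline F\in \DM^\infty(B)$ whose restrictions to $\Omega$ answer the question. I expect the main obstacle to lie in the unbounded case, where one needs to glue locally defined $\DM^\infty$-fields on an exhaustion by bounded Lipschitz subdomains while preserving a uniform $L^\infty$-bound, a delicate construction requiring careful selection of the approximating sequence.

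For (5), the construction of $f$ and $\overline f$ uses the one-dimensional structure directly: writing $\Omega$ as a disjoint union of intervals $(\alpha_i,\beta_i)$, the cumulative functions $f(x)=\mu((\alpha_i,x))$ and $\overline f(x)=|\mu|((\alpha_i,x))$ on each component assemble into $BV(\Omega)$ functions whose distributional derivatives are $\mu$ and $|\mu|$ respectively. Admissibility follows from the 1D embedding $BV(\Omega)\cap L^1(\Omega)\hookrightarrow L^\infty(\Omega)$, which bounds $\int |u^*|\,d|\mu|\le \|u^*\|_\infty |\mu|(\Omega)\le C\|u\|_{BV(\Omega)}|\mu|(\Omega)$. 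Finally, when $|\R\setminus\Omega|>0$, any $E\subset \Omega$ of finite perimeter in $\R$ either satisfies $P(E)=0$, in which case $E$ is Lebesgue-equivalent to $\emptyset$ (the alternative $E\sim\R$ being ruled out by $E\subset\Omega\ne\R$) and hence $\mu(E^1\cap\Omega)=0$, or satisfies $P(E)\ge 1$, giving $|\mu(E^1\cap\Omega)|\le |\mu|(\Omega)\cdot P(E)$, so that $\Mm(\Omega)=\PB(\Omega)$.
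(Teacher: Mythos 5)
Your treatments of items (1), (3) and (5) follow the paper's proof in substance: (1) via Theorem \ref{thm:GG_app} and Theorem \ref{thm:GG_boundary_domain_u}, (3) by testing \eqref{eq:admissibility_def} on $\chi_E$ (the paper uses $u^*$, you use the $\lambda\equiv 0$ representative $\chi_{E^1}$; both work) together with \eqref{eq:isoper_omega}, and (5) via the one-dimensional $BV\hookrightarrow L^\infty$ embedding and the fact that $P(E)\ge 1$ whenever $0<|E|$ and $E\not\sim\R$. Two points, however, need attention.

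First, in (2) you assert that the compactly-contained bound \eqref{eq:subcritical_cond_smooth} ``is precisely the starting hypothesis'' of \cite[Theorem 8.2]{Phuc_Torres} and that \eqref{eq:L_non_extreme} is merely ``the equivalent formulation''. It is not: \eqref{eq:L_non_extreme} (and the hypothesis of that theorem, which tests the zero-extension of $\mu$ on all bounded smooth open sets of $\R^n$) concerns sets $U$ that may cross $\partial\Omega$, so $U\cap\Omega$ is in general neither smooth nor compactly contained in $\Omega$, and there is no a priori bound of $P(U\cap\Omega)$ by a multiple of $P(U)$. Passing from interior sets to boundary-crossing sets is a genuine step, which the paper carries out by first producing $F$ with $\div F=\mu$ from \cite[Theorem 7.4]{Phuc_Torres} and then applying \eqref{eq:GG_boundary_domain_u} to $\chi_U$ with $\lambda\equiv 0$, using the continuity of ${\rm Tr}_{\partial\Omega}$ and \eqref{eq:isoper_omega}; this is also why the constant degrades from $L$ to $\widetilde L=\|F\|_{L^\infty(\Omega;\R^n)}\big(1+C(c_n|\Omega|^{1/n}+1)\big)$. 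Deducing the missing implication from Corollary \ref{cor:PT} would be circular, since that corollary is itself a consequence of the present item.

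Second, item (4) carries no hypothesis on $\Omega$, while your reduction (zero-extension to a large smooth ball, then (3) followed by (2)) covers only bounded $\Omega$ and, as written, only $n\ge 2$, since (3) requires it. You explicitly leave the unbounded case as an unresolved ``delicate construction'', so the proof is incomplete exactly where the statement is most general. The paper sidesteps the geometry altogether by quoting the functional-analytic representation of \cite[Lemma 7.3]{Phuc_Torres} (in essence a Hahn--Banach extension of the bounded functional $u\mapsto\int_\Omega u^*\,d\mu$ through the isometric embedding $u\mapsto\nabla u$ into $L^1(\Omega;\R^n)$, whose dual is $L^\infty(\Omega;\R^n)$), and observes that this argument works verbatim for an arbitrary open set $\Omega$. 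For bounded $\Omega$ with $n\ge 2$ your reduction is a legitimate alternative — the restriction and zero-extension inequalities you invoke are correct — but it cannot replace the paper's argument at the stated level of generality.
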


\begin{proof}
The first part of point (1) is an immediate consequence of Lemma \ref{necessary_cond}. The second is a consequence of the integration by parts formula \eqref{eq:GG_boundary_domain_u} for $\lambda \equiv \frac{1}{2}$: for all $u \in BV(\Omega) \cap L^\infty(\Omega)$  we exploit the continuity of the trace operator, \eqref{eq:pairing_estimate_lambda} and \eqref{eq:GG_boundary_domain} to get
\begin{align*}
\left | \int_{\Omega} u^* \, d \div F \right | & \le |(F, Du)_*(\Omega)| + \left | \int_{\partial \Omega} {\rm Tr}_{\partial \Omega}(u) {\rm Tr}^i(F, \partial \Omega) \, d \Haus{n-1} \right | \\
& \le \|F\|_{L^\infty(\Omega; \R^n)} \left ( |Du|(\Omega) + C \|u\|_{BV(\Omega)} \right ).
\end{align*}
 
As for point (2), the existence of $F \in \DM^{\infty}(\Omega)$ such that $\div F = \mu$ on $\Omega$ follows from \cite[Theorem 7.4]{Phuc_Torres}.
Now, if $n = 1$, we notice that
\begin{equation*}
\left | \int_\Omega u^\lambda \, d \mu \right | \le |\mu|(\Omega) \|u\|_{L^\infty(\Omega)} \le C \|u\|_{BV(\Omega)} \quad \text{ for all } \ u \in BV(\Omega) \text{ and } \lambda : \Omega \to [0, 1] \text{ Borel},
\end{equation*}
thanks to the embedding $BV(\Omega) \subset L^\infty(\Omega)$. Therefore, if $\lambda \equiv \frac{1}{2}$, we see that $\mu \in BV(\Omega)^*$. In addition, if $U \subset \R$ is a finite union of open intervals, we obtain
\begin{equation*}
\left | \mu(U \cap \Omega) \right | \le |\mu|(\Omega) \le |\mu|(\Omega) P(U),
\end{equation*}
since $P(U) \ge 1$.
If instead $n \ge 2$, we consider an open bounded set $U$ with smooth boundary and apply \eqref{eq:GG_boundary_domain_u} to the vector field $F$ and to the function $\chi_U \in BV(\Omega) \cap L^\infty(\Omega)$, by choosing $\lambda \equiv 0$. Given that $U = U^1$ due to the smoothness of the boundary, by \eqref{eq:repr_chi_E} we get $\chi_U^\lambda = \chi_U^- = \chi_U$ and
\begin{equation*}
\int_\Omega \chi_U \, d \div F = - (F, D\chi_U)_0(\Omega) - \int_{\partial \Omega} {\rm Tr}_{\partial \Omega}(\chi_U) {\rm Tr}^i(F, \partial \Omega) \, d \Haus{n-1}.
\end{equation*}
Since $\div F = \mu$, we exploit the continuity of the trace operator, \eqref{eq:pairing_estimate_lambda} and \eqref{eq:isoper_omega} to obtain
\begin{align*}
|\mu(U \cap \Omega)| & \le \|F\|_{L^\infty(\Omega; \R^n)} \left ( |D\chi_U|(\Omega) + \|{\rm Tr}_{\partial \Omega}(\chi_U)\|_{L^1(\partial \Omega; \Haus{n-1})} \right ) \\
& \le \|F\|_{L^\infty(\Omega; \R^n)} \left ( \Per(U, \Omega) + C \|\chi_U\|_{BV(\Omega)} \right ) \\
& \le \|F\|_{L^\infty(\Omega; \R^n)} \left ( \Per(U, \Omega) + C (c_n |\Omega|^{\frac{1}{n}} + 1) \Per(U) \right ).
\end{align*}
All in all, we obtain \eqref{eq:L_non_extreme} with $\widetilde{L} = \|F\|_{L^\infty(\Omega; \R^n)} ( 1 + C (c_n |\Omega|^{\frac{1}{n}} + 1) )$. Thus, we exploit \cite[Theorem 8.2]{Phuc_Torres}\footnote{In this result, the authors extend $\mu$ to $0$ on $\R^n \setminus \Omega$, which is equivalent to taking the intersection with $\Omega$ on the left hand side of \eqref{eq:L_non_extreme}. Also, by closely inspecting the proof, we notice that it is sufficient to take bounded open sets with smooth boundary.} to conclude that $\mu \in BV(\Omega)^*$. Finally, \eqref{eq:action_functional_pairing} is an immediate consequence of Theorem \ref{thm:GG_boundary_domain_u}, combined with Lemma \ref{lem:2a2b-bis} and Remark \ref{rem:div_admissible_Leibniz} if $\mu$ is admissible.

Then, we notice that, if $\mu \in \MH(\Omega)$, by \eqref{eq:repr_chi_E} we have
\begin{equation} \label{eq:chi_E_star_mu}
\left | \int_\Omega \chi_E^* \, d |\mu| \right | = |\mu|(E^1 \cap \Omega) + \frac{1}{2} |\mu|(\partial^* E \cap \Omega) \ge |\mu(E^1 \cup \Omega)|
\end{equation}
for any measurable set $E \subset \Omega$ of finite perimeter in $\R^n$.

Let now $n \ge 2$, $|\Omega| < + \infty$, $\mu \in \MH(\Omega)$ be an admissible measure and $C > 0$ be such that \eqref{eq:admissibility_def} holds true. Hence, we choose $u = \chi_E$ for some measurable set $E \subset \Omega$ of finite perimeter in $\R^n$ and we exploit \eqref{eq:isoper_omega} to obtain
\begin{equation*}
\left | \int_\Omega \chi_E^* \, d |\mu| \right | \le C \| \chi_E\|_{BV(\Omega)} \le C (c_n |\Omega|^{\frac{1}{n}} + 1) P(E).
\end{equation*}
Thus, in the light of \eqref{eq:chi_E_star_mu}, we get $\mu \in \PB_{L}(\Omega)$ for $L = C (c_n |\Omega|^{\frac{1}{n}} + 1)$, and this proves point (3).

Point (4) is stated in \cite[Lemma 7.3]{Phuc_Torres} in the case $\Omega$ is an open bounded set with Lipschitz boundary; however, given that the proof is completely analogous to the one in the case $\Omega = \R^n$ (see \cite[Lemma 4.1]{Phuc_Torres}), we stress the fact that this representation result holds true without any additional assumption on the set $\Omega$.

Finally, let $n = 1$. We notice that $\MH(\Omega) = \Mm(\Omega)$, and, arguing as above, we see that
\begin{equation*}
\left | \int_\Omega u^* \, d |\mu| \right | \le |\mu|(\Omega) \|u\|_{L^\infty(\Omega)} \le C \|u\|_{BV(\Omega)} \quad \text{ for all } \ u \in BV(\Omega),
\end{equation*}
thanks to the embedding $BV(\Omega) \subset L^\infty(\Omega)$, which shows that $|\mu| \in BV(\Omega)^*$. Hence, we exploit point (4) and the fact that $\DM^\infty(\Omega) = BV(\Omega)$. Then, if $|\R \setminus \Omega| > 0$, we have $P(E) \ge 1$ as long as $|E| > 0$, so that we get
\begin{equation*}
\left | \int_\Omega \chi_E^* \, d |\mu| \right |  \le |\mu|(\Omega) P(E).
\end{equation*}
Combining this bound with \eqref{eq:chi_E_star_mu} we obtain $\mu \in \PB_{L}(\Omega)$ for $L = |\mu|(\Omega)$, and so we conclude that $\Mm(\Omega) = \PB(\Omega)$.
\end{proof}

As an immediate consequence of Proposition \ref{prop:adm_PB_div}(2), we obtain the following refinement of \cite[Theorem 8.2]{Phuc_Torres}.

\begin{corollary} \label{cor:PT}
Let $\Omega$ be a bounded open set with Lipschitz boundary and $\mu \in \Mm(\Omega)$. Then we have $\mu \in BV(\Omega)^*$ if and only if there exists $L > 0$ such that
\begin{equation} \label{eq:PT}
|\mu(U)| \le L\, \Per(U)\,\quad \text{ for all open sets with smooth boundary } \, U \Subset \Omega.
\end{equation}
\end{corollary}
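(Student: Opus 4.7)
The plan is to split the equivalence into two implications and reduce each to results already established in the excerpt.

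For the sufficient direction $(\Leftarrow)$, I would simply observe that since $\Omega$ has Lipschitz boundary it is weakly regular, so the assumption \eqref{eq:PT} is precisely condition \eqref{eq:subcritical_cond_smooth} of Lemma~\ref{lem:open_sets_PB}. That lemma then upgrades \eqref{eq:PT} to the full perimeter bound on every measurable subset of $\Omega$, i.e., $\mu\in\PB_{L}(\Omega)$, and Proposition~\ref{prop:adm_PB_div}(2) immediately yields $\mu\in BV(\Omega)^{*}$.

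For the necessary direction $(\Rightarrow)$, I would test the functional $\T_{\mu}$ against a Lipschitz approximation of $\chi_{U}$ from inside. Fix $U\Subset\Omega$ open with smooth boundary, let $C>0$ denote the operator norm of $\T_{\mu}$, and set $d(x)=\dist(x,\partial U)$ for $x\in U$ and $d(x)=0$ otherwise. Then $u_{\delta}:=\min\{1,d/\delta\}$ is continuous and compactly supported in $\overline{U}\Subset\Omega$, satisfies $u_{\delta}\to\chi_{U}$ pointwise on $\R^{n}$, and $\int_{\Omega}|\nabla u_{\delta}|\,dx=\delta^{-1}\Leb{n}(\{0<d<\delta\})\to\Per(U)$ as $\delta\to 0^{+}$ by the standard Minkowski content formula for smooth sets. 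Since $u_{\delta}$ is continuous, $u_{\delta}^{*}=u_{\delta}$ and hence $\T_{\mu}(u_{\delta})=\int_{\Omega}u_{\delta}\,d\mu\to\mu(U)$ by dominated convergence against the finite measure $|\mu|$. Passing to the limit in $|\T_{\mu}(u_{\delta})|\le C\|u_{\delta}\|_{BV(\Omega)}$ and invoking the isoperimetric estimate \eqref{eq:isoper_omega} (with the trivial adjustment $|U|\le|\Omega|$ and $\Per(U)\ge 2$ when $n=1$) gives $|\mu(U)|\le C(c_{n}|\Omega|^{1/n}+1)\Per(U)$, which is exactly \eqref{eq:PT}.

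The only mildly delicate point will be the sharp convergence $\int_{\Omega}|\nabla u_{\delta}|\,dx\to\Per(U)$, but this is standard for smooth $\partial U$ via the tubular neighbourhood of $\partial U$ where $|\nabla d|=1$. A shorter variant that bypasses the approximation altogether would be to test $\T_{\mu}$ directly on $\chi_{U_{-\delta}}$, where $U_{-\delta}=\{x\in U:d(x)>\delta\}$: for $\Leb{1}$-a.e. $\delta>0$ one has $|\mu|(\partial U_{-\delta})=0$, hence $\T_{\mu}(\chi_{U_{-\delta}})=\mu(U_{-\delta})\to\mu(U)$ along such a sequence, yielding the same bound after applying \eqref{eq:isoper_omega} to $\chi_{U_{-\delta}}$.
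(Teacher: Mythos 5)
Your proof is correct. The sufficiency direction is exactly the paper's: Lemma~\ref{lem:open_sets_PB} upgrades \eqref{eq:PT} to $\mu\in\PB_L(\Omega)$, and Proposition~\ref{prop:adm_PB_div}(2) gives $\mu\in BV(\Omega)^*$. Where you diverge is the necessity direction: the paper simply cites the necessity part of \cite[Theorem 8.2]{Phuc_Torres}, whereas you re-prove it from scratch by testing $\T_\mu$ on the inner Lipschitz approximations $u_\delta=\min\{1,\dist(\cdot,\partial U)/\delta\}$ and passing to the limit. Your argument is sound: $u_\delta$ is continuous so $u_\delta^*=u_\delta$ and no $\Haus{n-1}$-a.e.\ issues arise, $\T_\mu(u_\delta)\to\mu(U)$ by dominated convergence, and $\|u_\delta\|_{BV(\Omega)}\to |U|+\Per(U)$, with \eqref{eq:isoper_omega} absorbing the $L^1$ term into a multiple of $\Per(U)$ (and your $n=1$ caveat is the right one, since \eqref{eq:isoper_omega} rests on the isoperimetric inequality for $n\ge 2$). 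The one point worth making airtight is the convergence $\int_\Omega|\nabla u_\delta|\,dx\to\Per(U)$; note that for the inequality you actually only need $\limsup_{\delta\to 0}\int_\Omega|\nabla u_\delta|\,dx\le\Per(U)$, which follows directly from the coarea formula $\int_\Omega|\nabla u_\delta|\,dx=\delta^{-1}\int_0^\delta\Haus{n-1}(\{d=t\})\,dt$ together with $\Haus{n-1}(\{d=t\})\to\Haus{n-1}(\partial U)$ for smooth $\partial U$, so the full Minkowski-content identity is not even required. The trade-off is clear: the paper's version is shorter but leans on an external theorem whose proof is essentially your computation; yours makes the easy direction self-contained at the cost of a few lines. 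Your second variant with $\chi_{U_{-\delta}}$ also works, provided you select $\delta$ with $|\mu|(\{d=\delta\})=0$ (all but countably many) so that $\T_\mu(\chi_{U_{-\delta}})=\mu(U_{-\delta})$, as you indicate.
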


\begin{proof}
If $\mu \in BV(\Omega)^*$, then \cite[Theorem 8.2]{Phuc_Torres} implies \eqref{eq:PT}. On the other hand, combining Lemma \ref{lem:open_sets_PB} and \eqref{eq:PT}, we get $\mu \in \PB_L(\Omega)$. Therefore, by Proposition \ref{prop:adm_PB_div}(2) we conclude that $\mu \in BV(\Omega)^*$.
\end{proof}

In the light of Proposition \ref{prop:adm_PB_div}(5), we see that in the one dimensional case every Radon measure is admissible. This fails to be true if $n \ge 2$, even assuming the perimeter bound condition, as we show in the following remark.

\begin{remark} \label{rem:counterexample}
There exist measures in $\PB(\Omega)$ which are not admissible. To see this we can closely follow \cite[Proposition 5.1]{Phuc_Torres}, which shows the existence of a measure $\mu \in BV(\R^n)^*$ such that $|\mu| \notin BV(\R^n)^*$. We let $\mu_\eps = f_\eps \, \Leb{n}$, where
\begin{equation*}
f_\eps(x) = \eps |x|^{-1 - \eps} \sin{(|x|^{-\eps})} + (n-1) |x|^{-1} \cos{(|x|^{-\eps})} \ \text{ for } x \neq 0
\end{equation*} 
and some $\eps \in (0, n-1)$. Then, we have $f_\eps = \div F_\eps$ for 
\begin{equation*}
F_\eps(x) = \frac{x}{|x|} \cos{(|x|^{-\eps})}.
\end{equation*}
Clearly, $F_\eps \in \DM^\infty_{\rm loc}(\R^n)$ and $\|F_\eps\|_{L^\infty(\R^n; \R^n)} = 1$, and so, due to Proposition \ref{prop:adm_PB_div}(1), we have $\mu_\eps \in BV(\Omega)^* \cap \PB_1(\Omega)$ for any open bounded set $\Omega$ with Lipschitz boundary.

Then, we consider the function $u_\eps(x) = |x|^{-n + 1 + \eps}$, which satisfies $u_\eps \in W^{1,1}_{\rm loc}(\R^n)$. We notice that $u_\eps \notin L^1(B_r; |\mu_\eps|)$ for any $r > 0$. Indeed, while it is easy to check that
\begin{equation*}
\int_{B_r} \frac{|\cos{(|x|^{-\eps})}|}{|x|^{n - \eps}} \, dx < + \infty,
\end{equation*}
we see that, if we set $c_n = \Haus{n-1}(\partial B_1)$,
\begin{equation*}
\int_{B_r} \frac{|\sin{(|x|^{-\eps})}|}{|x|^{n}} \, dx = c_n \int_0^r \frac{|\sin{(\rho^{-\eps})}|}{\rho} \, d \rho = [\rho^{-\eps} = t] = c_n  \int_{r^{-\frac{1}{\eps}}}^{+\infty} \frac{1}{\eps} \frac{|\sin{(t)}|}{t} \, d t = + \infty,
\end{equation*}
due to the well-known fact that the function $t \to \frac{\sin{(t)}}{t} \notin L^1((\delta, + \infty))$ for all $\delta > 0$. This proves that $u_\eps f_\eps \notin L^1(B_r)$, and therefore $u_\eps \notin L^1(B_r; |\mu_\eps|)$. More in general, for any open set $\Omega$ containing the origin we have $u_\eps \notin L^1(\Omega; |\mu_\eps|)$, and therefore, due to Lemma \ref{lem:2a2b-bis}, we obtain $|\mu_\eps| \notin BV(\Omega)^*$.
\end{remark}

The previous remark shows that we cannot drop the admissibility condition on $\mu$, if we want to ensure that $u^\lambda \in L^1(\Omega; |\mu|)$ for all $u \in BV(\Omega)$ and $\lambda : \Omega \to [0, 1]$ Borel, even in the case $\Omega$ is a bounded open set with Lipschitz boundary. However, even without the admissibility condition, it is enough to assume that at least one $\lambda$-representative of $u \in BV(\Omega)$ is $|\mu|$-summable, in order to ensure this summability for all of them.

\begin{remark} \label{rem:summability_lambda_equiv}
If $\mu \in \PB(\Omega)$, Proposition \ref{prop:adm_PB_div}(2) implies that $\mu = \div F$ for some $F \in \DM^{\infty}(\Omega)$, hence we can apply Proposition \ref{prop:summability_lambda} to conclude that, given some Borel functions $\lambda_1, \lambda_2 : \Omega \to [0, 1]$, we have $u^{\lambda_1} \in L^1(\Omega; |\mu|)$ if and only if $u^{\lambda_2} \in L^1(\Omega; |\mu|)$. In addition, for any Borel function $\lambda : \Omega \to [0, 1]$, we have $u^\lambda \in L^1(\Omega; |\mu|)$ if and only if $\mul \in L^1(\Omega; |\mu|)$, again by Proposition \ref{prop:summability_lambda}.
\end{remark}

We provide a few basic examples of admissible measures, which, in the light of Proposition \ref{prop:adm_PB_div}, also belong to in $\PB(\Omega)$, as long as $|\Omega| < + \infty$.

\begin{example}\label{ex:tMomega} \rm
Let $\Omega$ be a bounded open set with Lipschitz boundary. 
We consider $\mu\in \Mm(\Omega)$ defined as
\[
\mu = h\, \Leb{n} + \gamma\, \Haus{n-1}\restrict \Gamma
\] 
where $h\in L^{q}(\Omega)$ for some $q>n$, $\gamma \in L^{\infty}(\Omega; \Haus{n-1}\restrict \Gamma)$, 
and $\Gamma\subset \Omega$ is a compact set such that there exist $\Lambda, \overline{\rho}>0$ for which
\begin{equation}\label{eq:mudensityestimate}
\Haus{n-1}(\Gamma \cap B_{\rho}(x)) \le \Lambda \rho^{n-1}\qquad \forall\, 0<\rho<\overline{\rho} \text{ and }\forall\, x\in \Gamma\,.
\end{equation}
We claim that $\mu$ is admissible, and we prove it by showing that both the measures $h\, \Leb{n}$ and $\gamma\, \Haus{n-1}\restrict \Gamma$ are admissible. In particular, this implies that all these measures belong to $\PB(\Omega)$, thanks to Proposition \ref{prop:adm_PB_div}. We first point out that, thanks to the Sobolev embedding of $BV(\Omega)$ into $L^{\frac{n}{n-1}}(\Omega)$, we get
\begin{equation*}
\left | \int_{\Omega} u \, |h| \, dx \right | \le \|h\|_{L^{n}(\Omega)} \|u\|_{L^{\frac{n}{n-1}}(\Omega)} \le C  \|h\|_{L^{n}(\Omega)} \|u\|_{BV(\Omega)}  \  \text{ for all } u \in BV(\Omega),
\end{equation*}
This shows that $h \Leb{n}$ is an admissible measure.

Then, we notice that the reason for requiring \eqref{eq:mudensityestimate} is that it ensures a crucial continuity property of the upper and lower trace operators from $BV$ to $L^{1}(\Gamma; \Haus{n-1})$. Indeed, there exists a constant $C>0$ depending only on $\Omega, \Haus{n-1}(\Gamma), \overline{\rho}$ and $\Lambda$ such that 
\begin{equation} \label{eq:Gammadual}
\int_{\Gamma} |u^{\pm}|\, d\Haus{n-1} \le C \|u\|_{BV(\Omega)} \text{ for all } u \in BV(\Omega).
\end{equation}

In order to show \eqref{eq:Gammadual}, we consider the zero-extension of $u$ to $\R^n$, which we denote by $u_0$: we have $u_0 \in BV(\R^n)$ and
\begin{equation} \label{eq:ext_BV}
\|u_{0}\|_{BV(\R^{n})}\le c\|u\|_{BV(\Omega)},
\end{equation}
for some $c = c(\Omega) > 0$ (see for instance \cite[Lemma 5.10.4]{Ziemer}).
Due to \eqref{eq:mudensityestimate}, we can apply \cite[Theorem 3.86]{AFP} to get
\begin{equation}\label{eq:traceineq}
\int_{\Gamma} |u^{\pm}|\, d\Haus{n-1} \le C \|u_{0}\|_{BV(\R^{n})} 
\end{equation}
where $C>0$ is a constant depending only on $\Haus{n-1}(\Gamma), \overline{\rho}$ and $\Lambda$. Combining \eqref{eq:ext_BV} and \eqref{eq:traceineq}, we obtain \eqref{eq:Gammadual}, which in particular implies $\Haus{n-1} \restrict \Gamma \in BV(\Omega)^*$ and that $u^{\pm} \in L^1(\Gamma; \Haus{n-1})$. Hence, for all $u \in BV(\Omega)$ we obtain
\begin{equation*}
\left | \int_{\Gamma} u^{*} \, |\gamma| \, d \Haus{n-1} \right | \le \int_{\Gamma} |u^*| \, |\gamma| \, d \Haus{n-1} \le C \|\gamma\|_{L^{\infty}(\Gamma; \Haus{n-1})} \|u\|_{BV(\Omega)},
\end{equation*}
where $C>0$ is a constant depending only on $\Omega, \Haus{n-1}(\Gamma), \overline{\rho}$ and $\Lambda$. Therefore, the measure $\gamma \, \Haus{n-1} \res \Gamma$ is admissible. All in all, this proves that $\mu$ is an admissible measure.
\end{example}

\begin{example} \rm \label{ex:Virginia}
Let $n \ge 2$ and $\mu = h \Leb{n}$, where $$h(x) = \frac{(n-1)}{|x-x_0|} \ \text{ for } x \neq x_0,$$ for some $x_0 \in \R^n$. We notice that $h \in L^q_{\rm loc}(\R^n)$ only for $q \in [1, n)$. However, it is easy to check that $\mu = \div F$, where 
$$F(x) = \frac{(x-x_0)}{|x-x_0|} \ \text{ for } x \neq x_0.$$ 
By Proposition \ref{prop:adm_PB_div}(1), for any open bounded set $\Omega$ with Lipschitz boundary we have $\mu \in BV(\Omega)^*$, so that $\mu$ is indeed an admissible measure, given that $\mu \ge 0$. In addition, since $\|F\|_{L^\infty(\R^n; \R^n)} = 1$, we see that $\mu \in \PB_1(\Omega)$, by Lemma \ref{necessary_cond}.
\end{example}

Given that the admissibility of a measure depends on its total variation, it seems natural to characterize it in terms of properties of the positive and negative parts of the given measure. In the following lemma we prove that any measure whose positive and negative parts belong to $\PB(\Omega)$ is indeed admissible.

\begin{lemma} \label{lem:Schmidt_non_extremality}
Let $\Omega$ be an open bounded set with Lipschitz boundary and let $\mu \in \Mm(\Omega)$. Assume that its positive and negative parts, $\mu^+$ and $\mu^-$, belong to $\PB_{L_+}(\Omega)$ and $\PB_{L_{-}}(\Omega)$ for some $L_{+},L_{-} > 0$, respectively. Then $\mu$ is admissible and belongs to $\PB_{L}(\Omega)$, where $L = \max\{L_+, L_{-}\}$. In particular, if $\mu \ge 0$ and $\mu \in \PB(\Omega)$, then it is admissible.
\end{lemma}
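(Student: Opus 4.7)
The plan is to establish the two claims separately. The only subtle point I anticipate is obtaining the sharp constant $L = \max\{L_+, L_-\}$ in the perimeter bound: a naive application of the triangle inequality to $\mu = \mu^+ - \mu^-$ would yield $L_+ + L_-$, so I would have to exploit the Hahn decomposition to do better.

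For the perimeter bound, I would use that $\mu^+$ and $\mu^-$ are concentrated on disjoint Borel sets, so for every measurable $E \subset \Omega$ both $\mu^+(E^1 \cap \Omega)$ and $\mu^-(E^1 \cap \Omega)$ are nonnegative, and $\mu(E^1\cap\Omega) = \mu^+(E^1\cap\Omega) - \mu^-(E^1\cap\Omega)$. The elementary inequality $|a-b| \le \max\{a,b\}$ for $a,b\ge 0$ then gives
\[
|\mu(E^1 \cap \Omega)| \le \max\bigl\{\mu^+(E^1 \cap \Omega),\, \mu^-(E^1 \cap \Omega)\bigr\} \le \max\{L_+, L_-\}\, \Per(E),
\]
which is precisely $\mu \in \PB_L(\Omega)$ with $L = \max\{L_+, L_-\}$.

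For admissibility, I would first apply Remark \ref{necessary_cond_ball} to $\mu^+$ and $\mu^-$ to obtain $\mu^\pm \in \MH(\Omega)$, so that $|\mu| = \mu^+ + \mu^-$ also belongs to $\MH(\Omega)$. Since $\Omega$ has Lipschitz boundary and $\mu^\pm \in \PB(\Omega)$, Proposition \ref{prop:adm_PB_div}(2) yields $\mu^\pm \in BV(\Omega)^*$ with continuity constants $C_+, C_- > 0$. The nonnegativity of $\mu^\pm$ lets me invoke Lemma \ref{lem:pre_adm} to deduce, for every $u \in BV(\Omega)$,
\[
\int_\Omega |u^*|\, d\mu^\pm \le C_\pm\, \|u\|_{BV(\Omega)}.
\]
Adding these two estimates and using $|\mu| = \mu^+ + \mu^-$ gives
\[
\left|\int_\Omega u^*\, d|\mu|\right| \le \int_\Omega |u^*|\, d|\mu| \le (C_+ + C_-)\, \|u\|_{BV(\Omega)},
\]
so $|\mu|\in BV(\Omega)^*$, i.e., $\mu$ is admissible.

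The ``in particular'' assertion is then immediate: a nonnegative $\mu \in \PB(\Omega)$ has $\mu^+ = \mu$ and $\mu^- \equiv 0$, both trivially in $\PB(\Omega)$, so the main statement applies and yields the admissibility of $\mu$.
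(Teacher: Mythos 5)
Your proposal is correct and follows essentially the same route as the paper: the sharp constant $L=\max\{L_+,L_-\}$ comes from $|a-b|\le\max\{a,b\}$ for $a,b\ge 0$ applied to $\mu^\pm(E^1\cap\Omega)$, and admissibility comes from Proposition \ref{prop:adm_PB_div}(2) applied to $\mu^\pm$ followed by the triangle inequality. Your extra detour through Lemma \ref{lem:pre_adm} is harmless but not needed, since $|\mu|\in BV(\Omega)^*$ only requires the bound for $u\in BV(\Omega)\cap L^\infty(\Omega)$, where $\int_\Omega u^*\,d|\mu|=\int_\Omega u^*\,d\mu^++\int_\Omega u^*\,d\mu^-$ directly.
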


\begin{proof}
Thanks to Proposition \ref{prop:adm_PB_div}(2), we know that $\mu^+, \mu^- \in BV(\Omega)^*$. Hence, a simple application of the triangle inequality implies that $|\mu| \in BV(\Omega)^*$. Let now $L_+, L_- > 0$ be the constants for which $\mu^+$ and $\mu^-$ satisfy \eqref{eq:subcritical_cond}, respectively. We notice that, for all measurable sets $E \subset \Omega$, we have
\begin{equation*}
|\mu(E^1 \cap \Omega)| = |\mu^+(E^1 \cap \Omega) - \mu^-(E^1 \cap \Omega)| \le \max \{\mu^+(E^1 \cap \Omega), \mu^-(E^1 \cap \Omega) \} \le \max\{L_+, L_{-}\} P(E),
\end{equation*}
so that $\mu \in \PB_{L}(\Omega)$, for $L = \max\{L_+, L_{-}\}$.
\end{proof}


A relevant consequence of the results explored so far is that, for admissible measures $\mu$, the functional $\T_{\mu, \lambda}$ defined in \eqref{def:T_mu_lambda} can be naturally extended to all functions in $BV(\Omega)$.

\begin{lemma} \label{lem:lambda_int_approx}
Let $\mu \in \MH(\Omega)$ be admissible and $\lambda: \Omega \to [0, 1]$ be a Borel function. Then the functional $\T_{\mu, \lambda} : BV(\Omega) \to \R$ given by \eqref{def:T_mu_lambda}
is well-defined.
Then, given $u \in BV(\Omega)$, if we consider the truncation of $u$, $T_N(u)$ for $N > 0$, and its smooth approximation $T_{N}(u)^{\lambda}_{k}$ given by Theorem \ref{thm:smooth_lambda_approx}, we have
\begin{equation} \label{eq:conv_trunc_lambda}
\lim_{N \to + \infty} \int_{\Omega} T_N(u)^{\lambda} \, d\mu = \int_{\Omega} u^{\lambda} \, d \mu
\end{equation}
and
\begin{equation} \label{eq:conv_trunc_smooth_approx}
\lim_{N \to + \infty} \lim_{k \to + \infty} \int_{\Omega} T_N(u)_k^{\lambda} \, d\mu = \int_{\Omega} u^\lambda \, d \mu.
\end{equation}
Finally, for any $u \in BV(\Omega)$ we can find a sequence $(u_j)_{j \in \N} \subset C^\infty(\Omega) \cap BV(\Omega) \cap L^\infty(\Omega)$ such that
\begin{equation} \label{eq:density_conv_lambda}
\lim_{j \to + \infty} \T_{\mu, \lambda}(u_j) = \T_{\mu, \lambda}(u).
\end{equation}
\end{lemma}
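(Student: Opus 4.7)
The well-definedness in the first assertion is essentially free: admissibility of $\mu$ together with Lemma \ref{lem:2a2b-bis} yields $u^\lambda \in L^1(\Omega;|\mu|)$ for every $u \in BV(\Omega)$ and every Borel $\lambda:\Omega\to[0,1]$, so the integral in \eqref{def:T_mu_lambda} converges absolutely and $\T_{\mu,\lambda}$ is a well-defined (not necessarily linear) map on all of $BV(\Omega)$.

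For \eqref{eq:conv_trunc_lambda} I would combine Proposition \ref{prop:mul} with a dominated convergence argument taken with respect to $|\mu|$. Proposition \ref{prop:mul} provides the pointwise convergence $T_N(u)^\lambda(x)\to u^\lambda(x)$ for $\Haus{n-1}$-a.e. $x\in\Omega$, and the uniform majorization $|T_N(u)^\lambda(x)|\le M[u,\lambda](x)$ for $\Haus{n-1}$-a.e. $x\in\Omega$, independently of $N$. Because $\mu\in\MH(\Omega)$, the set where these two properties fail is $|\mu|$-negligible; moreover, admissibility of $\mu$ and Lemma \ref{lem:2a2b-bis} guarantee $M[u,\lambda]\in L^1(\Omega;|\mu|)$. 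Lebesgue's dominated convergence theorem then yields \eqref{eq:conv_trunc_lambda}.

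For \eqref{eq:conv_trunc_smooth_approx} I would fix $N>0$ and apply Theorem \ref{thm:smooth_lambda_approx} to $T_N(u)\in BV(\Omega)\cap L^\infty(\Omega)$. The resulting sequence $\big(T_N(u)^\lambda_k\big)_k\subset C^\infty(\Omega)$ converges to $T_N(u)^\lambda$ for $\Haus{n-1}$-a.e. $x\in\Omega$ (and hence $|\mu|$-a.e.), while it is uniformly bounded in $L^\infty(\Omega)$ by $(1+1/k)N\le 2N$. Since $|\mu|(\Omega)<+\infty$, the dominated convergence theorem gives, for each fixed $N$,
\[
\lim_{k\to+\infty}\int_\Omega T_N(u)^\lambda_k \, d\mu \;=\; \int_\Omega T_N(u)^\lambda \, d\mu,
\]
and combining this with \eqref{eq:conv_trunc_lambda} produces \eqref{eq:conv_trunc_smooth_approx}.

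Finally, for the density statement \eqref{eq:density_conv_lambda}, I would run a standard diagonal extraction. For every $N\in\N$, choose $k_N\in\N$ such that
\[
\left|\int_\Omega T_N(u)^\lambda_{k_N} \, d\mu - \int_\Omega T_N(u)^\lambda \, d\mu\right| < \frac{1}{N},
\]
and set $u_j:=T_j(u)^\lambda_{k_j}\in C^\infty(\Omega)\cap BV(\Omega)\cap L^\infty(\Omega)$. Because each $u_j$ is continuous, $u_j^\lambda=u_j$ for any Borel $\lambda$, hence $\T_{\mu,\lambda}(u_j)=\int_\Omega u_j\,d\mu$, and the triangle inequality together with \eqref{eq:conv_trunc_lambda} yields $\T_{\mu,\lambda}(u_j)\to\T_{\mu,\lambda}(u)$. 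No single step is genuinely hard; the only subtle point is the systematic use of $\mu\in\MH(\Omega)$ to transfer $\Haus{n-1}$-a.e. statements coming from Proposition \ref{prop:mul} and Theorem \ref{thm:smooth_lambda_approx} into $|\mu|$-a.e. statements, so that Lebesgue's theorem applies.
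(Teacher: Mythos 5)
Your proposal is correct and follows essentially the same route as the paper's proof: Lemma \ref{lem:2a2b-bis} for well-definedness and the $L^1(\Omega;|\mu|)$-majorant $\mul$, dominated convergence in $N$ via Proposition \ref{prop:mul}, dominated convergence in $k$ for fixed $N$ using the uniform bound $2N$ and the $\Haus{n-1}$-a.e. (hence $|\mu|$-a.e.) convergence from Theorem \ref{thm:smooth_lambda_approx}, and a diagonal extraction for \eqref{eq:density_conv_lambda}. Your explicit remark that $u_j^\lambda=u_j$ for the smooth approximants is a correct detail left implicit in the paper.
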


\begin{proof}
We recall that $\T_{\mu, \lambda}$ is well-defined on $BV(\Omega) \cap L^\infty(\Omega)$.
Thanks to Lemma \ref{lem:2a2b-bis}, $|\mu| \in BV(\Omega)^*$ implies $u^\lambda, \mul \in L^{1}(\Omega; |\mu|)$. Hence, for any $u \in BV(\Omega)$ we get \eqref{eq:conv_trunc_lambda} by Lebesgue's Dominated Convergence Theorem with respect to the measure $|\mu|$, thanks to \eqref{eq:TuLambda_estimate}. In addition, the functional
$$\T_{\mu, \lambda}(u) = \int_{\Omega} u^\lambda \, d \mu$$ is well-defined for any $u \in BV(\Omega)$.
Let now $N > 0$ and $T_{N}(u)^{\lambda}_{k}$ be the smooth approximation of $T_{N}(u)$ given by Theorem \ref{thm:smooth_lambda_approx}. Then, we have
\begin{equation*}
T_{N}(u)^{\lambda}_{k}(x) \to T_{N}(u)^{\lambda}(x) \ \text{ as } k \to + \infty \ \text{ for } |\mu|\text{-a.e. } x \in \Omega,
\end{equation*}
since $\mu \in \MH(\Omega)$, and $|T_{N}(u)_{k}^{\lambda}(x)| \le 2 N$ for every $x \in \Omega$ and $k\ge 1$. Therefore we obtain
\begin{equation*}
\lim_{k \to \infty} \int_{\Omega} T_{N}(u)_{k}^{\lambda} \, d \mu = \int_{\Omega} T_{N}(u)^{\lambda} \, d \mu
\end{equation*}
by Lebesgue's Dominated Convergence Theorem with respect to $|\mu|$, since the constant $N$ is a summable majorant, given that $|\mu|(\Omega) < \infty$. Hence, combining this result with \eqref{eq:conv_trunc_lambda}, we get
\begin{equation*}
\lim_{N \to + \infty} \lim_{k \to + \infty} \int_{\Omega} T_N(u)_k^{\lambda} \, d\mu = \lim_{N \to + \infty} \int_{\Omega} T_N(u)^\lambda \, d \mu = \int_{\Omega} u^\lambda \, d \mu.
\end{equation*}
This proves \eqref{eq:density_conv_lambda}. Consequently, there exists a sequence $(k_j)_{j \in \N} \subset \N$ such that $u_j := T_{j}(u)_{k_j}^\lambda$ belongs to $C^\infty(\Omega) \cap BV(\Omega) \cap L^\infty(\Omega)$ and satisfies \eqref{eq:conv_trunc_smooth_approx}. 
\end{proof}

\begin{remark}
It is easy to see that, if $\mu \in BV(\Omega)^*$ and $\mu \ge 0$, then $\mu$ is an admissible measure. Hence, if we choose $\lambda \equiv \frac{1}{2}$, Lemma \ref{lem:lambda_int_approx} provides an extension of \cite[Theorem 7.4 (iv) and Remark 8.3]{Phuc_Torres}, with no additional assumptions on $\Omega$.
\end{remark}

We exploit the previous result to find a sharp estimate of $|\T_{\mu, \lambda}(u)|$ in terms of the total variation of $u$ and its trace on the boundary of the domain.

\begin{proposition}\label{prop:trunc-smooth-coarea}
Let $\Omega$ be an open bounded set with Lipschitz boundary. Let $L > 0$ and $\mu \in \PB_L(\Omega)$. Let $u \in BV(\Omega)$ and $\lambda:\Omega\to [0,1]$ be a Borel function. If $u^\lambda \in L^{1}(\Omega; |\mu|)$, then
\begin{equation} \label{eq:mu_u_Du_condition}
\left | \int_{\Omega} u^{\lambda} \, d \mu \right | \le L \left(|Du|(\Omega) + \int_{\de \Omega} |{\rm Tr}_{\partial \Omega}(u)|\, d\Haus{n-1}\right).
\end{equation}
In particular, \eqref{eq:mu_u_Du_condition} holds true for all $u \in BV(\Omega)$ as long as $\mu$ is admissible.
\end{proposition}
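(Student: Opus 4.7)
The plan is to establish the estimate first for smooth bounded functions via a layer-cake/coarea argument, and then pass to general $u\in BV(\Omega)$ through the $\lambda$-approximation of Theorem \ref{thm:smooth_lambda_approx}. The last assertion for admissible $\mu$ is then immediate from Lemma \ref{lem:2a2b-bis}. Since $\Omega$ is bounded with Lipschitz boundary and $\mu\in \PB(\Omega)$, Remark \ref{rem:summability_lambda_equiv} together with Proposition \ref{prop:mul} ensure that the single assumption $u^{\lambda} \in L^{1}(\Omega; |\mu|)$ automatically yields $u^{\tilde{\lambda}}, \mul \in L^{1}(\Omega; |\mu|)$ for every Borel $\tilde{\lambda}:\Omega\to[0,1]$.

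Fix $v\in C^{\infty}(\Omega)\cap BV(\Omega)\cap L^{\infty}(\Omega)$. Since $v$ is bounded and $|\mu|(\Omega)<+\infty$, Tonelli's theorem gives
\[
\int_{\Omega}v\,d\mu=\int_{0}^{\infty}\mu(\{v>t\}\cap\Omega)\,dt-\int_{0}^{\infty}\mu(\{v<-t\}\cap\Omega)\,dt.
\]
By Sard's theorem, for $\Leb{1}$-a.e. $t>0$ the open set $E_{t}=\{v>t\}$ has smooth boundary and hence coincides with its measure-theoretic interior $E_t^1$ up to a $\Haus{n-1}$-negligible set (analogously for $\{v<-t\}$), so the perimeter bound \eqref{eq:subcritical_cond} yields $|\mu(E_{t}\cap\Omega)|\le L\,\Per(E_{t})$. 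Let $v_0$ denote the zero extension of $v$ to $\R^n$: since $\{v_0>t\}=\{v>t\}$ and $\{v_0<-t\}=\{v<-t\}$ for $t>0$, applying the coarea formula in $\R^n$ to the nonnegative $BV$ functions $(v_0)_{\pm}=(v_{\pm})_0$, combined with the standard identity $|D(v_\pm)_0|(\R^n)=|Dv_\pm|(\Omega)+\int_{\partial\Omega}(\Tr_{\partial\Omega}v)_\pm\,d\Haus{n-1}$ and the fact that for smooth $v$ the gradients of $v_+$ and $v_-$ have disjoint supports (so $|Dv_+|(\Omega)+|Dv_-|(\Omega)=|Dv|(\Omega)$), one obtains
\[
\int_{0}^{\infty}\Per(\{v>t\})\,dt+\int_{0}^{\infty}\Per(\{v<-t\})\,dt = |Dv|(\Omega)+\int_{\partial\Omega}|\Tr_{\partial\Omega}(v)|\,d\Haus{n-1},
\]
which chained with the previous inequalities proves \eqref{eq:mu_u_Du_condition} for such $v$.

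For a general $u$, I would apply Theorem \ref{thm:smooth_lambda_approx} to $T_N(u)$ for every $N$, obtaining $v_{N,k}=T_N(u)^\lambda_k\in C^{\infty}(\Omega)\cap BV(\Omega)$ with $\|v_{N,k}\|_\infty \le (1+1/k)N$, $\Tr_{\partial\Omega}(v_{N,k})=\Tr_{\partial\Omega}(T_N(u))$, $v_{N,k}\to T_N(u)^\lambda$ pointwise $\Haus{n-1}$-a.e. (hence $|\mu|$-a.e.), and $v_{N,k}\to T_N(u)$ strictly in $BV(\Omega)$. Writing the smooth-case estimate for $v_{N,k}$ and sending $k\to\infty$ (Lebesgue's dominated convergence on the left with uniform dominant $2N$, strict convergence together with the trace identity on the right) yields the analogous estimate for $T_N(u)^\lambda$. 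Letting then $N\to\infty$, Proposition \ref{prop:mul} and dominated convergence with dominant $\mul\in L^1(\Omega;|\mu|)$ give convergence of the left-hand side to $\int_\Omega u^\lambda\,d\mu$, while $T_N(u)\to u$ strictly in $BV(\Omega)$ combined with the continuity of the trace with respect to strict convergence (see \cite[Theorem 3.88]{AFP}) takes care of the right-hand side.

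The main obstacle is the first step, namely matching at the smooth level the perimeters of the positive and negative superlevel sets of $v$ in $\R^n$ with the combination $|Dv|(\Omega)+\int_{\partial\Omega}|\Tr_{\partial\Omega}(v)|\,d\Haus{n-1}$: the zero extension works precisely because for $t>0$ the positive superlevel sets of $v$ are unaffected while the global $BV$-norm of $v_0$ picks up exactly the boundary trace contribution. This matching is what makes the constant $L$ in \eqref{eq:mu_u_Du_condition} coincide with the one in the perimeter bound, i.e.\ the sharpness feature emphasized in the introduction.
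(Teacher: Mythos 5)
Your proposal is correct and follows essentially the same route as the paper: smooth $\lambda$-approximation of the truncations $T_N(u)$ via Theorem \ref{thm:smooth_lambda_approx}, the layer-cake formula combined with Sard's theorem and the perimeter bound on the (regular) level sets, the coarea formula, and then the double limit $k\to+\infty$, $N\to+\infty$ exactly as in the paper. The only (immaterial) difference is that you account for the boundary contribution through the total-variation identity for the zero extension of $v_{\pm}$ in $\R^n$, whereas the paper bounds $\int_0^\infty\int_{\partial\Omega}{\rm Tr}_{\partial\Omega}(\chi_{\{|v|>t\}})\,d\Haus{n-1}\,dt$ by $\int_{\partial\Omega}|{\rm Tr}_{\partial\Omega}(v)|\,d\Haus{n-1}$ via the Cavalieri-type trace inequality of Lemma \ref{lem:slicetrace}.
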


\begin{proof}
We start by noticing that, thanks to Remark \ref{necessary_cond_ball}, we know that $\mu \in \MH(\Omega)$. In addition, by Remark \ref{rem:summability_lambda_equiv}, we deduce that $\mul  \in L^{1}(\Omega; |\mu|)$, given that $u^\lambda \in L^{1}(\Omega; |\mu|)$. In particular, if $\mu$ is admissible, Lemma \ref{lem:2a2b-bis} ensures that $u^\lambda, \mul  \in L^{1}(\Omega; |\mu|)$ for all $u \in BV(\Omega)$. Take now $N > 0$ and let $T_{N}(u)^{\lambda}_{k}$ be the smooth approximation of $T_{N}(u)$ given by Theorem \ref{thm:smooth_lambda_approx}. 
Arguing as in the proof of Lemma \ref{lem:lambda_int_approx}, we see that
\begin{equation} \label{eq:conv_approx_trunc_lambda}
\int_{\Omega} T_{N}(u)^{\lambda} \, d \mu = \lim_{k \to \infty} \int_{\Omega} T_{N}(u)_{k}^{\lambda} \, d \mu.
\end{equation}
We notice now that $T_{N}(u)_{k}^{\lambda} \in C^{\infty}(\Omega)$, so that the superlevel and sublevel sets 
\begin{equation*}
\{ T_{N}(u)_{k}^{\lambda} > t \} = \{x \in \Omega : T_{N}(u)_{k}^{\lambda}(x) > t \}  \text{ and } \{ T_{N}(u)_{k}^{\lambda} < t \} = \{x \in \Omega : T_{N}(u)_{k}^{\lambda}(x) < t \}
\end{equation*}
are open sets with smooth boundary for $\Leb{1}$-a.e. $t \in \R$, by Morse-Sard lemma \cite[Lemma 13.15]{maggi2012sets}. In particular, we deduce that, for $\Leb{1}$-a.e. $t \in \R$, 
\begin{equation*}
\{ T_{N}(u)_{k}^{\lambda} > t \}^{1} = \{ T_{N}(u)_{k}^{\lambda} > t \} \text{ and }
\{ T_{N}(u)_{k}^{\lambda} < t \}^{1} = \{ T_{N}(u)_{k}^{\lambda} < t \}.
\end{equation*}
We exploit now Theorem \ref{thm:smooth_lambda_approx} to notice that 
\begin{equation} \label{eq:unif_est_trunc_k_lambda}
|T_{N}(u)_{k}^{\lambda}| \le 2N \text{ for all } k \ge 1,
\end{equation}
\begin{equation} \label{eq:strict_conv_trunc_N_lambda}
T_{N}(u)_{k}^{\lambda} \to T_{N}(u) \ \text{ in } BV(\Omega)\text{-strict as } k \to +\infty,
\end{equation}
and 
\begin{equation} \label{eq:trace_truncation_k}
{\rm Tr}_{\de \Omega} \left (T_{N}(u)_{k}^{\lambda} \right )(x) =  {\rm Tr}_{\de \Omega} (T_{N}(u))(x) \ \text{ for } \Haus{n-1}\text{-a.e. } x \in \partial \Omega. 
\end{equation}
Therefore, by using \eqref{eq:conv_approx_trunc_lambda}, the layer-cake representation together with \eqref{eq:unif_est_trunc_k_lambda}, the fact that $\mu$ satisfies \eqref{eq:subcritical_cond}, the coarea formula, Lemma \ref{lem:slicetrace}, the convergence \eqref{eq:strict_conv_trunc_N_lambda} and the identity \eqref{eq:trace_truncation_k}, we get
\begin{align*}
\left | \int_{\Omega} T_{N}(u)^{\lambda} \, d \mu \right | & = \lim_{k \to \infty} \left | \int_{0}^{\infty} \int_{\{T_{N}(u)_{k}^{\lambda} > t \}} \, d \mu \, dt - \int_{- \infty}^{0} \int_{\{ T_{N}(u)_{k}^{\lambda} < t \}} \, d \mu \, dt \right | \\
& \le \limsup_{k\to\infty} \int_{0}^{2N} \left | \mu ( \{ T_{N}(u)_{k}^{\lambda} > t \}) \right | \, dt + \int_{-2N}^{0} \left | \mu ( \{ T_{N}(u)_{k}^{\lambda} < t \}) \right | \, dt \\
& \le L \limsup_{k\to\infty} \int_{0}^{2N} \Per( \{ T_{N}(u)_{k}^{\lambda} > t \}) \, dt + \int_{-2N}^{0} \Per ( \{ T_{N}(u)_{k}^{\lambda} < t \}) \, dt \\
& \le L \limsup_{k\to\infty} \int_{-2N}^{2N} |D \chi_{\{ T_{N}(u)_{k}^{\lambda} > t \}}|(\Omega)\, dt + \int_{0}^{2N}\int_{\de \Omega} {\rm Tr}_{\de\Omega}(\chi_{\{ |T_{N}(u)_{k}^{\lambda}| > t \}})\, d\Haus{n-1} \, d t \\
& \le L \limsup_{k\to\infty} \int_{-\infty}^{\infty} |D \chi_{\{ T_{N}(u)_{k}^{\lambda} > t \}}|(\Omega)\, dt + \int_{0}^{+\infty}\int_{\de \Omega} {\rm Tr}_{\de\Omega}(\chi_{\{ |T_{N}(u)_{k}^{\lambda}| > t \}})\, d\Haus{n-1} \, d t \\
& \le L \lim_{k\to\infty} |DT_{N}(u)_{k}^{\lambda}|(\Omega) + \int_{\de \Omega} |{\rm Tr}_{\de \Omega}(T_{N}(u)_{k}^{\lambda})|\, d\Haus{n-1}\\
& = L \left(|DT_{N}(u)|(\Omega) +  \int_{\de \Omega} |{\rm Tr}_{\de \Omega}(T_N(u))|\, d\Haus{n-1}\right)\\
& \le L \left(|Du|(\Omega) + \int_{\de \Omega} |{\rm Tr}_{\de \Omega}(T_N(u))|\, d\Haus{n-1}\right),
\end{align*}
since $|DT_{N}(u)| \le |Du|$. In addition, we notice that $T_N(u) \to u$ in $BV(\Omega)$-strict as $N \to + \infty$, so that we obtain 
$${\rm Tr}_{\de \Omega}(T_N(u)) \to {\rm Tr}_{\de \Omega}(u) \ \text{ in } L^1(\partial \Omega; \Haus{n-1}) \ \text{ as } N \to + \infty,$$
see for instance to \cite[Theorem 3.88]{AFP}. 
Thus, we obtain \eqref{eq:mu_u_Du_condition} by passing to the limit as $N \to + \infty$ also on the left hand side, again thanks to the Lebesgue's Dominated Convergence Theorem, employing \eqref{eq:TuLambda_estimate}, \eqref{eq:precise_repr_truncated_lambda2}, and the fact that $\mul \in L^{1}(\Omega; |\mu|)$.
\end{proof}

\begin{remark}
We stress that we could not directy apply the Cavalieri representation formula to $\displaystyle \int_{\Omega} T_{N}(u)^{\lambda} \, d \mu$, since, a priori, we do not know whether 
\begin{equation*}
\{ T_{N}(u)^{\lambda} > t \} = \{ T_{N}(u)^{\lambda} > t \}^{1} \ \text{ or } \ \{ T_{N}(u)^{\lambda} > t \} = \{ T_{N}(u)^{\lambda} > t \}^{1} \cup \partial^{*} \{ T_{N}(u)^{\lambda} > t \}.
\end{equation*}
Therefore, we would not be able to apply the property \eqref{eq:subcritical_cond} and proceed with the proof. Instead, we see here the usefulness of Theorem \ref{thm:smooth_lambda_approx} in ensuring that, for $\Leb{1}$-a.e. $t \in \R$, 
\begin{align*}
\{ (T_{N}(u))_{k}^{\lambda} > t \}^{1} = \{ (T_{N}(u))_{k}^{\lambda} > t \}, \ & \ \partial^{*} \{ (T_{N}(u))_{k}^{\lambda} > t \} = \partial \{ (T_{N}(u))_{k}^{\lambda} > t \}, \\
\{ (T_{N}(u))_{k}^{\lambda} < t \}^{1} = \{ (T_{N}(u))_{k}^{\lambda} < t \}, \ &  \ \partial^{*} \{ (T_{N}(u))_{k}^{\lambda} < t \} = \partial \{ (T_{N}(u))_{k}^{\lambda} < t \}.
\end{align*}
\end{remark}

\begin{lemma} \label{lem:existence_optimal_T}
Let $\Omega$ be a bounded open set with Lipschitz boundary. Let $L > 0$ and $\mu \in \PB_L(\Omega)$ be an admissible measure. Then there exists $F \in \DM^{\infty}(\Omega)$ such that $\div F = \mu$ on $\Omega$ and $\|F\|_{L^{\infty}(\Omega; \R^n)} \le L$.
\end{lemma}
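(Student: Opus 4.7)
The plan is to reduce the existence of the desired vector field to \cite[Lemma 7.3]{Phuc_Torres}, which, given any $\T\in W^{1,1}_0(\Omega)^*$, produces $F\in\DM^\infty(\Omega)$ representing $\T$ via integration against $\div F$, and achieves $\|F\|_{L^\infty(\Omega;\R^n)}=\|\T\|_{W^{1,1}_0(\Omega)^*}$. Hence the whole task is to verify that the functional $\T_\mu$ is well-defined and continuous on $W^{1,1}_0(\Omega)$ with norm at most $L$.

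First, since $\mu$ is admissible, Lemma~\ref{lem:2a2b-bis} ensures $u^*\in L^1(\Omega;|\mu|)$ for every $u\in BV(\Omega)$, so $\T_\mu(u)=\int_\Omega u^*\,d\mu$ is well-defined on the whole of $BV(\Omega)$, in particular on the subspace $W^{1,1}_0(\Omega)\subset BV(\Omega)$. For $u\in W^{1,1}_0(\Omega)$ we have ${\rm Tr}_{\partial\Omega}(u)=0$, so applying Proposition~\ref{prop:trunc-smooth-coarea} with $\lambda\equiv\tfrac12$ yields
\[
|\T_\mu(u)|=\left|\int_\Omega u^*\,d\mu\right|\le L\left(|Du|(\Omega)+\int_{\partial\Omega}|{\rm Tr}_{\partial\Omega}(u)|\,d\Haus{n-1}\right)=L\int_\Omega|\nabla u|\,dx.
\]
This shows $\T_\mu|_{W^{1,1}_0(\Omega)}\in W^{1,1}_0(\Omega)^*$ with operator norm at most $L$.

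By \cite[Lemma 7.3]{Phuc_Torres} there then exists $F\in\DM^\infty(\Omega)$ with $\|F\|_{L^\infty(\Omega;\R^n)}\le L$ such that
\[
\int_\Omega u^*\,d\mu=\T_\mu(u)=\int_\Omega u^*\,d\div F\qquad\text{for every }u\in W^{1,1}_0(\Omega)\cap L^\infty(\Omega).
\]
Finally, testing this identity against $u\in C^\infty_c(\Omega)$, for which $u^*=u$ everywhere, we get $\int_\Omega u\,d\mu=\int_\Omega u\,d\div F$ for all $u\in C^\infty_c(\Omega)$, hence $\mu=\div F$ as distributions and therefore as Radon measures on $\Omega$.

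The substantive content has already been done in Proposition~\ref{prop:trunc-smooth-coarea}, which is what upgrades the perimeter-bound constant $L$ (originally only controlling $\mu$ against perimeters of sets) into a bound controlling $\T_\mu$ against the $W^{1,1}$ seminorm, with the \emph{same} constant. Without admissibility, Remark~\ref{rem:counterexample} shows $u^*$ need not even be $|\mu|$-summable for $u\in W^{1,1}_{\mathrm{loc}}$, so this hypothesis is essential, and is the point that distinguishes this refinement from the pre-existing Phuc--Torres statement.
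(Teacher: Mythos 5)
Your proposal is correct and follows essentially the same route as the paper: admissibility via Lemma~\ref{lem:2a2b-bis} to make $\T_\mu$ well-defined, Proposition~\ref{prop:trunc-smooth-coarea} with zero trace to get $\|\T_\mu\|_{W^{1,1}_0(\Omega)^*}\le L$, and \cite[Lemma 7.3]{Phuc_Torres} to produce $F$. Your explicit final step of testing against $C^\infty_c(\Omega)$ to identify $\mu=\div F$ as distributions is a small, correct addition that the paper leaves implicit.
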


\begin{proof}
Thanks to Lemma \ref{lem:2a2b-bis}, for all $u \in BV(\Omega)$ and Borel functions $\lambda : \Omega \to [0,1]$ we have $u^\lambda \in L^1(\Omega; |\mu|)$. Hence, Proposition \ref{prop:trunc-smooth-coarea} implies that $\mu \in W^{1,1}_0(\Omega)^*$; that is, it defines a linear continuous functional on $W^{1,1}_0(\Omega)$, since
\begin{equation*}
\left | \int_\Omega \tilde{u} \, d\mu \right | \le L \| \nabla u \|_{L^1(\Omega; \R^n)},
\end{equation*}
by \eqref{eq:mu_u_Du_condition}, given that ${\rm Tr}_{\de \Omega}(u) = 0$ and $u^\lambda(x) = u^*(x) = \tilde{u}(x)$ for $\Haus{n-1}$-a.e. $x \in \Omega$ and all Borel functions $\lambda : \Omega \to [0, 1]$. Hence, \cite[Lemma 7.3]{Phuc_Torres} yields
\begin{align*}
\|\mu\|_{W^{1,1}_0(\Omega)^*} & = \sup \left \{ \int_\Omega \tilde{u} \, d\mu : u \in W^{1,1}_0(\Omega), \| \nabla u \|_{L^1(\Omega; \R^n)} \le 1 \right \} \\
& = \min \left \{ \|G\|_{L^{\infty}(\Omega; \R^n)} : G \in L^\infty(\Omega; \R^n), \div G = \mu \text{ on } \Omega \right \}.
\end{align*}
Thus, this ends the proof.
\end{proof}

\begin{remark} \label{rmk:notstronger}
Under the same assumptions of Lemma \ref{lem:existence_optimal_T}, we notice that the measure $\mu$ actually satisfies
\begin{equation*}
|\mu( (E^{1} \cup \partial^* E) \cap \Omega)| \le L\, \Per(E)\,, 
\end{equation*}
for all sets $E \subset \Omega$ of finite perimeter in $\R^n$, coherently with properties (2) and (3) of Lemma \ref{necessary_cond}. 
Indeed, $\Omega$ is an open bounded set with Lipschitz boundary, and hence it is weakly regular. Therefore, we can choose a vector field $F$ as in Lemma \ref{lem:existence_optimal_T} and, using the Gauss-Green formula \eqref{eq:GG_2}, we immediately obtain
\begin{align*}
|\mu((E^{1} \cup \redb E) \cap \Omega)| & = |\div F((E^{1} \cup \redb E) \cap \Omega)| = \left | \int_{\redb E} {\rm Tr}^e(F, \partial^* E) \, d \Haus{n - 1} \right | \\
& \le  \|F\|_{L^{\infty}(\Omega; \R^n)} \Per(E) \le L \, \Per(E)
\end{align*}
for all sets $E \subset \Omega$ of finite perimeter in $\R^n$.

In addition, we notice that, if $\mu \in \MH(\Omega)$ is an admissible measure, then Theorem \ref{thm:GG_boundary_domain_u} (taking into account Remark \ref{rem:div_admissible_Leibniz}), \eqref{eq:pairing_estimate_lambda} and \eqref{eq:GG_boundary_domain} imply the estimate
\begin{equation*}
\left | \int_\Omega u^\lambda \, d \mu \right | \le \Lambda \left ( |Du|(\Omega) + \int_{\partial \Omega} |\Tr_{\partial \Omega}(u)| \, d \Haus{n-1} \right ),
\end{equation*}
where
$$\Lambda = \inf \{ \|F\|_{L^\infty(\Omega; \R^n)} : F \in \DM^\infty(\Omega) \text{ with } \div F = \mu \}.$$
However, assuming $\mu \in \PB_L(\Omega)$ for some $L > 0$, we cannot prove that $\Lambda \le L$ without exploiting Proposition \ref{prop:trunc-smooth-coarea} itself, since in the proof of Lemma \ref{lem:existence_optimal_T} it is essential to know that the perimeter bound for sets holds at least for functions in $W^{1,1}_0(\Omega)$.
\end{remark}

As a final result, we show the stability of the admissibility and the perimeter bound conditions under a suitable type of smooth approximation procedure. More precisely, we prove that any admissible measure $\mu \in \PB_{L}(\Omega)$ can be approximated in the weak--$\ast$ sense by a sequence of absolutely continuous measures $\mu_j \Leb{n} \in \PB_{L_j}(\Omega)$ with smooth density functions, where $L_j \to L$ as $j \to + \infty$. The key idea is to use the duality between admissible measures in $\PB(\Omega)$ and essentially bounded divergence-measure fields. While it is not clear if a direct mollification of $\mu$ would in general still belong to $\PB_L(\Omega)$ (even assuming $\mu$ of compact support in $\Omega$), we shall instead rely on the fact that the Anzellotti-Giaquinta--type regularization of a vector field almost preserves its $L^{\infty}$ norm. Similar ideas can be found for instance in \cite{dal1999renormalized}.

\begin{proposition}\label{prop:muapprox}
Let $\Omega$ be an open bounded set with Lipschitz boundary.
Let $L > 0$ and $\mu \in \PB_L(\Omega)$ be an admissible measure. Then there exists a sequence of functions $(\mu_{j})_{j} \subset C^{\infty}(\Omega)$ such that $\mu_j \Leb{n}$ is an admissible measure and it belongs to $\PB_{L(1 + 1/j)}(\Omega)$ for all $j \in \N$, and $\mu_j \, \Leb{n} \weakto \mu$ as $j \to + \infty$.
Moreover, if $\mu = h\, \Leb{n} + \mu^{s}$ with $h\in L^{q}(\Omega)$, for some $q \in [1, + \infty]$, and $\mu^{s}$ has compact support in $\Omega$, then for any open set $\Omega'\Subset \Omega$ containing the support of $\mu^{s}$ we have the following properties:
\begin{itemize}
\item there exists $(h_{j})_j \subset C^{\infty}(\Omega)$ such that $h_{j}\to h$ in $L^{p}(\Omega)$, for all $p \in [1, q)$ and $p = q$ if $q < + \infty$, and $\mu_{j} - h_{j} \to 0$ in $L^n(\Omega\setminus \Omega')$;
\item $\mu_{j} = \rho_{j}\ast \mu^s + h_j$ on $\Omega'$ for $j$ large enough and $\rho_j(x) = \delta_{j}^{-n} \rho(x/\delta_j)$, for some sequence $\delta_j \downarrow 0$ and some fixed mollifier $\rho \in C^{\infty}_c(B_1)$.
\end{itemize}
\end{proposition}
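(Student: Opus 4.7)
The strategy is to lift $\mu$ to a bounded divergence-measure field and then smooth it. By Lemma \ref{lem:existence_optimal_T} there exists $F \in \DM^{\infty}(\Omega)$ with $\div F = \mu$ and $\|F\|_{L^{\infty}(\Omega; \R^n)} \le L$. I would apply the Anzellotti--Giaquinta construction of Theorem \ref{thm:Anzellotti_Giaquinta} componentwise to $F$, obtaining a sequence $F_j = \sum_k \rho_{\delta_k} \ast (F \zeta_k) \in C^{\infty}(\Omega; \R^n)$ with $F_j \to F$ in $L^1(\Omega;\R^n)$ and $\|F_j\|_{L^{\infty}(\Omega; \R^n)} \le (1+1/j)L$, and then set $\mu_j := \div F_j \in C^{\infty}(\Omega)$.

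Three of the four required properties are then routine. The perimeter bound $\mu_j \Leb{n} \in \PB_{(1+1/j)L}(\Omega)$ follows from Lemma \ref{necessary_cond}(3), since $\Omega$ is Lipschitz, hence weakly regular. The weak-$\ast$ convergence $\mu_j \Leb{n} \weakto \mu$ in $\Mm(\Omega)$ is obtained via the identity $\int_\Omega \phi\,\mu_j\,dx = -\int_\Omega \nabla\phi \cdot F_j\,dx$ for $\phi \in C_c^\infty(\Omega)$, passing to the limit thanks to $F_j \to F$ in $L^1(\Omega; \R^n)$. Finally, choosing the mollifier radii $\delta_k$ carefully (along the lines of \eqref{eq:supp_conv_cutoff}--\eqref{eq:conv_cutoff_infty}, together with the pointwise cancellation $\sum_k F\cdot\nabla\zeta_k = 0$) ensures $\mu_j \in L^1(\Omega)$ with a uniform bound in $j$, so that $\mu_j \Leb{n}$ is actually a finite Radon measure on $\Omega$.

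The delicate point is the admissibility of $\mu_j \Leb{n}$. Expanding
\[
\mu_j \;=\; \sum_k \rho_{\delta_k} \ast (\zeta_k \mu) \;+\; \sum_k \rho_{\delta_k} \ast (F \cdot \nabla \zeta_k),
\]
one sees that $\mu_j$ may fail to be globally bounded on $\Omega$ due to the blow-up of $|\nabla \zeta_k|$ near $\partial\Omega$, so obtaining $|\mu_j| \Leb{n} \in BV(\Omega)^*$ is not automatic. To overcome this I would further refine the covering $\{\Omega_k\}$, the partition of unity $\{\zeta_k\}$ and the radii $\{\delta_k\}$ (taking $\delta_k$ small enough relative to the thickness of $\Sigma_k$) so as to force $\mu_j \in L^q(\Omega)$ for some $q > n$; Example \ref{ex:tMomega} with $\Gamma = \emptyset$ then yields the admissibility of $\mu_j\Leb{n}$. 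This is where the admissibility hypothesis on $\mu$ actually enters, through Lemma \ref{lem:2a2b-bis} and the representation $|\mu| = \div \bar F$ for some $\bar F \in \DM^{\infty}(\Omega)$ granted by Proposition \ref{prop:adm_PB_div}(4).

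For the refined statement when $\mu = h\Leb{n} + \mu^s$ with $\mathrm{supp}(\mu^s) \Subset \Omega$, I would tailor the A-G construction so that $\Omega' \Subset \Omega_1$ (which is possible by choosing the parameter $m$ large in the proof of Theorem \ref{thm:Anzellotti_Giaquinta}). Then $\zeta_1 \equiv 1$ on $\Omega_1$ while $\zeta_k \equiv 0$ there for every $k \ge 2$, so on $\Omega'$, and for $j$ large enough that $B_{\delta_1}(x) \subset \Omega_1$ for all $x \in \Omega'$, the expansion of $F_j$ collapses to $F_j = \rho_{\delta_j} \ast F$, whence $\mu_j = \rho_{\delta_j} \ast \mu = \rho_j \ast \mu^s + h_j$ with $\rho_j := \rho_{\delta_j}$ and $h_j := \rho_j \ast \tilde h$, where $\tilde h$ is a suitable extension of $h$ by zero. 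The classical mollification theorem yields $h_j \to h$ in $L^p(\Omega)$ in the stated range, and the convergence $\mu_j - h_j \to 0$ in $L^n(\Omega\setminus\Omega')$ is then obtained by bounding the tail contributions of the A-G expansion for $k \ge 2$ with the same calibration of the radii $\delta_k$ mentioned above. The main obstacle throughout the argument is the admissibility step, which requires the fine tuning of the A-G construction just described.
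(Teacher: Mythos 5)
Your overall strategy coincides with the paper's: take $F\in\DM^{\infty}(\Omega)$ with $\div F=\mu$ and $\|F\|_{L^{\infty}(\Omega;\R^n)}\le L$ from Lemma \ref{lem:existence_optimal_T}, regularize it by the Anzellotti--Giaquinta scheme $F_j=\sum_k\rho_{j,k}\ast(\zeta_k F)$, and set $\mu_j=\div F_j$. The perimeter bound via the Gauss--Green formula and the $L^{\infty}$ estimate $\|F_j\|_{L^\infty}\le(1+1/j)L$, the weak--$\ast$ convergence, and the localization argument for $\mu=h\Leb{n}+\mu^s$ are all essentially as in the paper.

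The gap is in the admissibility of $\mu_j\Leb{n}$, precisely the point you flag as delicate. Your plan is to tune the covering and the radii $\delta_{j,k}$ so as to force $\mu_j\in L^q(\Omega)$ for some $q>n$ and then invoke Example \ref{ex:tMomega}. This cannot work in general. Expanding
\[
\mu_j=\sum_k\rho_{j,k}\ast(\zeta_k\mu)+\sum_k\bigl(\rho_{j,k}\ast(F\cdot\nabla\zeta_k)-F\cdot\nabla\zeta_k\bigr),
\]
the second sum can indeed be made small in any $L^r$, $r\le n$ (or even beyond), but the first sum is a mollification of the measure $\mu$ itself at scales $\delta_{j,k}$ that are forced to tend to $0$ as $k\to\infty$ by the support constraint $\mathrm{supp}(\zeta_k)+B_{\delta_{j,k}}\subset\Sigma_k$; shrinking them further only worsens matters. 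If $\mu$ has a singular part charging a neighborhood of $\partial\Omega$ (which is allowed in the first part of the statement), then $\|\rho_{\delta}\ast(\zeta_k\mu^s)\|_{L^q}$ scales like $\delta^{n/q-n}\int\zeta_k\,d|\mu^s|$, which blows up for $q>1$ as $\delta\to0$, so the series defining $\mu_j$ need not converge in any $L^q(\Omega)$ with $q>1$; only the $L^1$ bound $\sum_k\|\rho_{j,k}\ast(\zeta_k\mu)\|_{L^1}\le|\mu|(\Omega)$ is available. The paper's admissibility argument is of a different nature: it is a duality argument. Using the second field $G\in\DM^{\infty}(\Omega)$ with $\div G=|\mu|$ (Proposition \ref{prop:adm_PB_div}(4), which is where the admissibility of $\mu$ enters), one bounds $|\mu_j|\le\sum_k\rho_{j,k}\ast(\zeta_k\div G)+\sum_k|\rho_{j,k}\ast(F\cdot\nabla\zeta_k)-F\cdot\nabla\zeta_k|$, and then tests against an arbitrary $u\in BV(\Omega)$: the first sum is handled by moving the mollifier onto $u$ and integrating by parts against $G$ via \eqref{eq:GG_boundary_domain_u}, which after an application of the Leibniz rule produces three terms each controlled by $\|G\|_{L^\infty}\|u\|_{BV(\Omega)}$ (using $\sum_k\chi_{\Sigma_k}\le2$, the smallness conditions on $\rho_{j,k}\ast(G\cdot\nabla\zeta_k)-G\cdot\nabla\zeta_k$ in $L^n$, and the Sobolev embedding $BV(\Omega)\subset L^{n/(n-1)}(\Omega)$); the second sum is controlled by the same embedding. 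This yields $|\mu_j|\Leb{n}\in BV(\Omega)^*$ directly, without any higher integrability of $\mu_j$. You correctly identified the ingredient ($|\mu|=\div G$) but not the mechanism by which it is used.
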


\begin{proof}
The construction of the sequence $\mu_{j}$ starting from the measure $\mu$ can be done as follows. We recall that, thanks to Proposition \ref{prop:adm_PB_div}(4) and Lemma \ref{lem:existence_optimal_T}, if $\mu \in \PB_{L}(\Omega)$ and it is admissible, then there exist vector fields $F, G \in \DM^{\infty}(\Omega)$ such that 
\begin{itemize}
\item $\|F\|_{L^{\infty}(\Omega; \R^n)} \le L$ and $\div F = \mu$ on $\Omega$, 
\item $\div G = |\mu|$ on $\Omega$.
\end{itemize} 
Analogously as in the proof of Theorem \ref{thm:Anzellotti_Giaquinta}, we set $\Omega_{0, m} = \emptyset$ for $m \in \N$,
\[ \Omega_{k, m} = \left \{ x \in \Omega : \ \mathrm{dist}(x, \partial \Omega) > \frac{1}{m + k} \right \}  \quad \text{ for } k, m \ge 1 \]
and then $\Sigma_{k, m} := \Omega_{k+1, m} \setminus \overline{\Omega_{k-1, m}}$. For notational simplicity we set $\Omega_k = \Omega_{k, m}$ and $\Sigma_k = \Sigma_{k, m}$, since in the first part of the proof the second parameter does not play any role. We let $(\zeta_{k})_{k \ge 1}$ be a partition of unity subordinate to the open cover $(\Sigma_{k})_{k \ge 1}$ of $\Omega$. We choose a nonnegative sequence $(\delta_{j, k})_{j, k \ge 1}$ such that $\delta_{j, k} \to 0$ as $j \to + \infty$ for all $k \ge 1$ and
\begin{equation} \label{eq:supp_zeta_rho}
{\rm supp}(\zeta_{k}) + B_{\delta_{j,k}} \subset \Sigma_{k}.
\end{equation} 
Finally, we take a standard mollifier $\rho$ and we require that the family of mollifiers $\rho_{j,k} = \rho_{\delta_{j,k}}$ satisfies the following set of conditions:
\begin{equation}
\| \rho_{\delta_{j, k}} \ast \zeta_{k} - \zeta_{k} \|_{L^{\infty}(\Omega)} \le \frac{1}{j 2^{k}} \, , \label{eq:conv_cutoff_infty_1}
\end{equation}
\begin{equation} \label{eq:A3_mod}
\|\rho_{j,k}\ast (F\cdot \nabla \zeta_{k}) - F\cdot \nabla \zeta_{k}\|_{L^r(\Omega)} \le \frac{1}{j 2^k}
\end{equation}
for all $r \in [1, n]$,
\begin{equation} \label{eq:A3_mod_G}
\|\rho_{j,k}\ast (G\cdot \nabla \zeta_{k}) - G\cdot \nabla \zeta_{k}\|_{L^n(\Omega)} \le \frac{1}{j 2^k}
\end{equation}
and, if $\mu = h\, \Leb{n} + \mu^{s}$,
\begin{equation} \label{eq:A2_mod}
\|\rho_{j,k} \ast(\zeta_{k} h) - \zeta_{k} h\|_{L^p(\Omega)} \le \frac{1}{j 2^k},
\end{equation}
for $p \in [1, q)$ and $p = q$ if $q < + \infty$.
Then we set
\[
F_{j} = \sum_{k=1}^{+\infty} \rho_{j,k}\ast (\zeta_{k} F)
\]
and, finally,
\begin{equation}\label{eq:muj}
\mu_{j} = \div F_{j}.
\end{equation}
With reference to the proof of Theorem \ref{thm:Anzellotti_Giaquinta}, we fix $\varphi\in C^{\infty}_{c}(\Omega)$ and obtain
\begin{align*}
\int_\Omega \varphi \, \mu_{j}\, dx &= -\int_{\Omega} F_{j} \cdot \nabla \varphi\, dx\\
&= -\sum_{k=1}^{+\infty} \int_\Omega (\zeta_{k}F)\ast \rho_{j,k} \cdot \nabla \varphi\, dx\\
&= \sum_{k=1}^{+\infty} \int_\Omega \zeta_{k}(\rho_{j,k} \ast \varphi)\, d\mu + \sum_{k=1}^{+\infty} \int_\Omega \varphi\Big(\rho_{j,k}\ast (F\cdot \nabla \zeta_{k}) - F\cdot \nabla \zeta_{k}\Big) \, dx\\
&\longrightarrow \int_\Omega \varphi\, d\mu\,,\qquad \text{as $j\to + \infty$,}
\end{align*}
where the final passage to the limit is justified because
\begin{equation*}
\left | \int_{\Omega} \varphi\Big(\rho_{j,k}\ast (F\cdot \nabla \zeta_{k}) - F\cdot \nabla \zeta_{k}\Big) \, dx \right | \le \|\varphi\|_{L^\infty(\Omega)} \|\rho_{j,k}\ast (F\cdot \nabla \zeta_{k}) - F\cdot \nabla \zeta_{k}\|_{L^1(\Omega)} \le \frac{\|\varphi\|_{L^\infty(\Omega)}}{j 2^k}
\end{equation*}
thanks to \eqref{eq:A3_mod}. 
As a byproduct of the previous calculations, we also get
\begin{equation} \label{eq:mu_j_repr}
\mu_{j} = \sum_{k = 1}^{+\infty} \rho_{j,k}\ast (\zeta_{k} \mu) + \sum_{k=1}^{+\infty} \left(\rho_{j,k}\ast (F\cdot \nabla \zeta_{k}) - F\cdot \nabla \zeta_{k}\right)\,.
\end{equation}
In order to prove that $\mu_j \Leb{n}$ is admissible, we consider separately the two sums on the right hand side. Given that $\div G = |\mu|$, it is easy to see that
\begin{align*}
|\mu_{j}| \le \sum_{k = 1}^{+\infty} \rho_{j,k}\ast (\zeta_{k} \div G) + \sum_{k=1}^{+\infty} \left | \rho_{j,k}\ast ( F\cdot \nabla \zeta_{k}) - F\cdot \nabla \zeta_{k}\right|\,.
\end{align*}
Due to \eqref{eq:A3_mod} and the Sobolev embedding $BV(\Omega) \subset L^{\frac{n}{n-1}}(\Omega)$\footnote{If $n=1$, $BV(\Omega) \subset L^{\infty}(\Omega)$.}, for all $u \in BV(\Omega)$, we have
\begin{align*}
\left | \int_\Omega u \, \sum_{k=1}^{+\infty} \left | \rho_{j,k}\ast ( F\cdot \nabla \zeta_{k}) - F\cdot \nabla \zeta_{k}\right| \, dx \right | & \le  \sum_{k = 1}^{+\infty} \|u\|_{L^{\frac{n}{n-1}}(\Omega)} \|\rho_{j,k}\ast (F\cdot \nabla \zeta_{k}) - F\cdot \nabla \zeta_{k}\|_{L^n(\Omega)} \\
& \le C \|u\|_{BV(\Omega)}.
\end{align*}
As for the first sum, given that $\div G$ is an admissible measure, in the light of Remark \ref{rem:div_admissible_Leibniz} we can exploit the integration by parts formula \eqref{eq:GG_boundary_domain_u} to obtain
\begin{align*}
\int_\Omega u \, \sum_{k = 1}^{+\infty} \rho_{j,k}\ast (\zeta_{k} \div G) \, dx & = \sum_{k = 1}^{+\infty} \int_\Omega  \zeta_{k} (\rho_{j,k} \ast u) \, d \div G \\
& = -  \sum_{k = 1}^{+\infty} \int_\Omega G \cdot \nabla \left (  \zeta_{k} (\rho_{j,k} \ast u) \right ) \, dx,
\end{align*}
since ${\rm supp}( \zeta_{k}) \Subset \Omega$. Now, we exploit the Leibniz rule to get
\begin{align*}
\sum_{k = 1}^{+\infty} \int_\Omega G \cdot \nabla \left (  \zeta_{k} (\rho_{j,k} \ast u) \right ) \, dx & = \int_\Omega G \cdot  \left (\sum_{k = 1}^{+\infty}  \zeta_{k} (\rho_{j,k} \ast D u) \right ) \, dx + \sum_{k = 1}^{+\infty}  \int_\Omega  u \, \nabla \zeta_{k} \cdot (\rho_{j,k} \ast G - G) \, dx \\
& + \sum_{k = 1}^{+\infty}  \int_\Omega u \, \nabla \zeta_{k}  \cdot G \, dx.
\end{align*}
We deal with the three terms separately. As for the first term, we exploit the fact that $\sum_{k = 1}^{+\infty} \chi_{\Sigma_k} \le 2$ and \eqref{eq:conv_cutoff_infty_1} in order to get 
\begin{align*}
\left | \int_\Omega G \cdot  \left (\sum_{k = 1}^{+\infty}  \zeta_{k} (\rho_{j,k} \ast D u) \right ) \, dx \right | & \le \|G\|_{L^\infty(\Omega; \R^n)} \sum_{k=1}^{+\infty} \int_{{\rm supp}(\zeta_{k})} \rho_{j,k} \ast |D u| \, dx \\
& \le 2 \|G\|_{L^\infty(\Omega; \R^n)} |Du|(\Omega).
\end{align*}
Then, by \eqref{eq:A3_mod_G} we see that
\begin{equation*}
\left | \sum_{k = 1}^{+\infty} \int_\Omega  u \, \nabla \zeta_{k} \cdot (\rho_{j,k} \ast G - G) \, dx \right | \le \sum_{k=1}^{+\infty} \|u\|_{L^{\frac{n}{n-1}}(\Omega)} \|\rho_{j,k}\ast (G\cdot \nabla \zeta_{k}) - G\cdot \nabla \zeta_{k}\|_{L^n(\Omega)} \le C \|u\|_{BV(\Omega)},
\end{equation*}
again thanks to the Sobolev embedding for $BV$ functions. Finally, we exploit again \eqref{eq:GG_boundary_domain_u} to obtain
\begin{align*}
\sum_{k =1}^{+\infty} \int_\Omega u \, \nabla \zeta_{k}  \cdot G \, dx & = - \sum_{k =1}^{+\infty} \int_\Omega \zeta_{k} \, d \div (u G) = - \int_\Omega \, d \div(u G) = \int_{\partial \Omega} {\rm Tr}^{i}(G, \partial \Omega) {\rm Tr}_{\partial \Omega}(u) \, d \Haus{n-1},
\end{align*}
so that, thanks to \eqref{eq:GG_boundary_domain}, we get
\begin{equation*}
\left | \sum_{k =1}^{+\infty} \int_\Omega u \, \nabla \zeta_{k}  \cdot G \, dx \right | \le \|G\|_{L^\infty(\Omega; \R^n)} \|u\|_{BV(\Omega)}.
\end{equation*}
All in all, this shows that $|\mu_j| \Leb{n} \in BV(\Omega)^*$.

In addition, when $\mu = h\, \Leb{n} + \mu^{s}$ with $h\in L^{q}(\Omega)$, for some $q \in [1, + \infty]$, and $\mu^{s}$ has compact support in $\Omega'\Subset \Omega$, by choosing $m$ large enough so that $\Omega' \Subset \Omega_{1, m}$, thanks to \eqref{eq:supp_zeta_rho} the first sum in \eqref{eq:mu_j_repr} coincides with $\rho_{j,1}\ast \mu^s + h_j$ if restricted to $\Omega'$, where $h_j$ is the Anzellotti-Giaquinta's regularization of $h$, and it coincides with $h_{j}$ when restricted to $\Omega\setminus \Omega'$. At the same time, the second sum in \eqref{eq:mu_j_repr} tends to $0$ as $j\to\infty$ in $L^{n}(\Omega)$ by \eqref{eq:A3_mod}. Finally, $h_j \to h$ in $L^p(\Omega)$ with $p$ as above, by \eqref{eq:A2_mod}.
This shows the last part of the statement.

Finally, we prove that $\mu_{j} \in \PB_{L(1 + 1/j)}(\Omega)$. Given any set $E \subset \Omega$ of finite perimeter in $\R^n$, by Theorem \ref{thm:GG_app} we have
\begin{align*}
\left|\int_{E \cap \Omega} \mu_{j} \, dx\right| &= \left| \int_{\redb E} {\rm Tr}^i(F_j, \redb E) \, d\Haus{n-1} \right| \le \|F_{j}\|_{L^{\infty}(\Omega; \R^n)}\, \Per(E).
\end{align*}
Then, we notice that, for any $x \in \Omega$, there exists a unique $k_0 \ge 1$ such that $x \in \Sigma_{k_0} \cap \Sigma_{k_0+1}$, so that, arguing as in the proof of point (5) of Theorem \ref{thm:Anzellotti_Giaquinta}, thanks to the assumptions on $\zeta_{k}$ and $\rho_{j,k}$, in particular \eqref{eq:supp_zeta_rho} and \eqref{eq:conv_cutoff_infty_1}, we obtain
\[
|F_{j}(x)| \le \|F\|_{L^{\infty}(\Omega; \R^n)} \left(1 + \sum_{k=k_0}^{k_{0}+1} |(\zeta_{k}\ast \rho_{j,k})(x) - \zeta_{k}(x)|\right) \le L(1+1/j)\,,
\]
\end{proof}


\begin{thebibliography}{99}

\bibitem{ACM}
Luigi Ambrosio, Gianluca Crippa, and Stefania Maniglia.
\newblock Traces and fine properties of a ${BD}$ class of vector fields and
  applications.
\newblock In {\em Annales de la {F}acult{\'e} des sciences de {T}oulouse:
  {M}ath{\'e}matiques}, volume~14, pages 527--561, 2005.

\bibitem{AFP}
Luigi Ambrosio, Nicola Fusco, and Diego Pallara.
\newblock {\em Functions of bounded variation and free discontinuity problems}.
\newblock Oxford University Press, 2000.

\bibitem{Anzellotti_1983}
Gabriele Anzellotti.
\newblock Pairings between measures and bounded functions and compensated
  compactness.
\newblock {\em Annali di Matematica Pura ed Applicata}, 135(1):293--318, 1983.

\bibitem{ChenComiTorres}
Gui-Qiang Chen, Giovanni E. Comi, and Monica Torres. 
\newblock Cauchy fluxes and Gauss-Green formulas for divergence-measure fields over general open sets. 
\newblock {\em Archive for Rational Mechanics and Analysis}, 233: 87--166, 2019.

\bibitem{CF1}
Gui-Qiang Chen and Hermano Frid.
\newblock Divergence-measure fields and hyperbolic conservation laws.
\newblock {\em Archive for rational mechanics and analysis}, 147(2):89--118, 1999.

\bibitem{CTZ}
Gui-Qiang Chen, Monica Torres, and William P. Ziemer. 
\newblock Gauss--Green theorem for weakly differentiable vector fields, sets of finite perimeter, and balance laws. 
\newblock {\em Communications on Pure and Applied Mathematics}, 62(2): 242-304, 2009.

\bibitem{comi2022representation}
Giovanni~E. Comi, Graziano Crasta, Virginia De~Cicco, and Annalisa Malusa.
\newblock Representation formulas for pairings between divergence-measure
  fields and ${B}{V}$ functions.
\newblock {\em Journal of Functional Analysis} 286.1: 110192, 2024.

\bibitem{CDS}
Giovanni~E. Comi, Virginia De~Cicco, and Giovanni Scilla.
\newblock Beyond $BV$: new pairings and Gauss-Green formulas for measure fields with divergence measure.
\newblock {\em Preprint}, 2023.

\bibitem{comi2017locally}
Giovanni~E. Comi and Kevin~R. Payne.
\newblock On locally essentially bounded divergence measure fields and sets of
  locally finite perimeter.
\newblock {\em Advances in Calculus of Variations}, 13.2: 179-217, 2020.

\bibitem{Comi_Torres}
Giovanni~E. Comi and Monica Torres. 
\newblock One-sided approximation of sets of finite perimeter.
\newblock {\em Rendiconti Lincei}, 28.1: 181-190, 2017.

\bibitem{crasta2017anzellotti}
Graziano Crasta and Virginia De~Cicco.
\newblock Anzellotti's pairing theory and the {G}auss--{G}reen theorem.
\newblock {\em Advances in Mathematics}, 343:935--970, 2019.

\bibitem{crasta2017extension}
Graziano Crasta and Virginia De~Cicco.
\newblock An extension of the pairing theory between divergence-measure fields and $BV$ functions. 
\newblock {\em J. Funct. Anal.} 276 (8):2605--2635, 2019.

\bibitem{crasta2022variational}
Graziano Crasta and Virginia De~Cicco.
\newblock On the variational nature of the Anzellotti pairing.
\newblock {\em arXiv preprint arXiv:2207.06469}, 2022.

\bibitem{crasta2019pairings}
Graziano Crasta, Virginia De~Cicco, and Annalisa Malusa.
\newblock Pairings between bounded divergence-measure vector fields and ${BV}$
  functions.
\newblock {\em Adv. Calc. Var.}, 15(4), 787-810 (2022).


\bibitem{dal1999renormalized}
Gianni Dal~Maso, Fran{\c{c}}ois Murat, Luigi Orsina, and Alain Prignet.
\newblock Renormalized solutions of elliptic equations with general measure
  data.
\newblock {\em Annali della Scuola Normale Superiore di Pisa-Classe di
  Scienze}, 28(4):741--808, 1999.


\bibitem{evans2015measure}
Lawrence~C. Evans and Ronald~F. Gariepy.
\newblock {\em Measure theory and fine properties of functions}.
\newblock CRC press, 2015.




\bibitem{FuscoSpector2018}
Nicola Fusco and Daniel Spector. 
\newblock A remark on an integral characterization of the dual of $BV$. 
\newblock {\em Journal of Mathematical Analysis and Applications}, 457.2: 1370-1375 (2018).

\bibitem{Frid2}
Hermano Frid.
\newblock Divergence-measure fields on domains with {L}ipschitz boundary.
\newblock In {\em Hyperbolic Conservation Laws and Related Analysis with
  Applications}, pages 207--225. Springer, 2014.

%
%

%
%

\bibitem{giusti1984minimal}
Enrico Giusti.
\newblock Minimal surfaces and functions of bounded variation.
\newblock {\em Monogr. Math.}, 80, 1984.


\bibitem{LeoComi}
Gian~Paolo Leonardi and Giovanni E. Comi.
\newblock The prescribed mean curvature measure equation in non-parametric form.
\newblock {\em Preprint}, 2024.


\bibitem{MR4385590}
Gian~Paolo Leonardi and Giorgio Saracco.
\newblock Rigidity and trace properties of divergence-measure vector fields.
\newblock {\em Adv. Calc. Var.}, 15(1):133--149, 2022.

\bibitem{maggi2012sets}
Francesco Maggi.
\newblock {\em Sets of finite perimeter and geometric variational problems: an
  introduction to {G}eometric {M}easure {T}heory}, volume 135 of {\em Cambridge
  Studies in Advanced Mathematics}.
\newblock Cambridge University Press, Cambridge, 2012.
\newblock An introduction to geometric measure theory.

  
\bibitem{Meyers_Ziemer}
Norman G. Meyers and William P. Ziemer. 
\newblock Integral inequalities of Poincar\'e and Wirtinger type for BV functions. 
\newblock {\em American Journal of Mathematics}: 1345-1360, 1977.


\bibitem{phuc2008characterizations}
Nguyen~Cong Phuc and Monica Torres.
\newblock Characterizations of the existence and removable singularities of
  divergence-measure vector fields.
\newblock {\em Indiana University mathematics journal}, pages 1573--1597, 2008.

\bibitem{Phuc_Torres}
Nguyen~Cong Phuc and Monica Torres.
\newblock Characterizations of signed measures in the dual of {$BV$} and
  related isometric isomorphisms.
\newblock {\em Ann. Sc. Norm. Super. Pisa Cl. Sci. (5)}, 17(1):385--417, 2017.

\bibitem{scheven2016bv}
Christoph Scheven and Thomas Schmidt.
\newblock ${B}{V}$ supersolutions to equations of 1-{L}aplace and minimal
  surface type.
\newblock {\em Journal of Differential Equations}, 261(3):1904--1932, 2016.

\bibitem{scheven2018dual}
Christoph Scheven and Thomas Schmidt.
\newblock On the dual formulation of obstacle problems for the total variation
  and the area functional.
\newblock In {\em Annales de l'Institut Henri Poincar{\'e} C, Analyse non
  lin{\'e}aire}, volume~35, pages 1175--1207. Elsevier, 2018.

\bibitem{MR3314116}
Thomas Schmidt.
\newblock Strict interior approximation of sets of finite perimeter and
  functions of bounded variation.
\newblock {\em Proc. Amer. Math. Soc.}, 143(5):2069--2084, 2015.


\bibitem{Silhavy1}
Miroslav {\v{S}}ilhav{\`y}.
\newblock Divergence measure fields and {C}auchy's stress theorem.
\newblock {\em Rendiconti del Seminario Matematico della Universit{\`a} di
  Padova}, 113:15--45, 2005.
  
\bibitem{Silhavy2}
Miroslav {\v{S}}ilhav{\`y}.
\newblock The Gauss-Green theorem for bounded vector fields with divergence measure on sets of finite perimeter. 
\newblock {\em Indiana Univ. Math. J.} 72 (1): 29--42, 2023.



\bibitem{Ziemer}
William~P. Ziemer.
\newblock {\em Weakly differentiable functions}, volume 120 of {\em Graduate
  Texts in Mathematics}.
\newblock Springer-Verlag, New York, 1989.
\newblock Sobolev spaces and functions of bounded variation.

\end{thebibliography}
\end{document}